\numberwithin{equation}{section}
\newcommand{\rr}{\mathbb{R}}
\newcommand{\lan}{\langle}
\newcommand{\ran}{\rangle}
\newcommand{\be}{\begin{eqnarray*}}
\newcommand{\bel}{\begin{eqnarray}}
\newcommand{\ee}{\end{eqnarray*}}
\newcommand{\eel}{\end{eqnarray}}
\newcommand{\ba}{\begin{aligned}}
\newcommand{\ea}{\end{aligned}}
\newcommand{\de}{\Delta}
\newcommand{\na}{\nabla}
\newcommand{\ep}{\epsilon}
\newcommand{\rg}{\right}
\newcommand{\lf}{\left}
\newcommand{\pa}{\partial}
\newcommand{\wh}{\widehat}
\newcommand{\wt}{\widetilde}
\newcommand{\nq}{{\neq}}
\newcommand{\Mass}{ {\mathfrak {M}}}
\newcommand{\wwt}[1]{\breve{#1}}
\newcommand{\pal}{{\partial_x^{i}\partial_y^j\partial_z^k}}
\newcommand{\CC}{\mathfrak{B}_{2,\infty}}
\newcommand{\te}{{\zeta}} 
\newcommand{\cz}{{\lan  \mathfrak {C}\ran}}
\newcommand{\nz}{{\lan n\ran}}
\newcommand{\by}{Y}
\newcommand{\cc}{{ \mathfrak {C}}}
\newcommand{\myb}[1]{{#1}}
\newcommand{\myr}[1]{{#1}}
\newcommand{\myc}[1]{}
\newcommand{\Dy}{\mathbb{T}^2}
\newcommand{\mf}{\mathfrak }
\newtheorem{theorem}{Theorem}
\newtheorem{lem}{Lemma}
\newtheorem{pro}{Proposition}
\newtheorem{rmk}{Remark}
\numberwithin{remark}{section}
\numberwithin{lem}{section}
\numberwithin{theorem}{section}
\numberwithin{pro}{section}
\newcommand{\norm}[1]{\left\lVert#1\right\rVert}
\newcommand{\abs}[1]{\left\vert#1\right\vert}
\newcommand\Torus{{\mathbb T}}
\newcommand{\dss}{\displaystyle}
\title[Time-dependent Flows and Their Applications]{Time-dependent Flows and Their Applications in Parabolic-parabolic Patlak-Keller-Segel Systems\\ Part II: Shear Flows}
\date{}
\author{Siming He}
\address{
University of South Carolina, Columbia, SC, 29208}
\email{
siming@mailbox.sc.edu
}
\begin{document}
\maketitle
\begin{center}\large\emph{Dedicated to Eitan Tadmor on occasion of his 70th birthday}\end{center}
\begin{abstract}
In this study, we investigate the behavior of three-dimensional parabolic-parabolic Patlak-Keller-Segel (PKS) systems in the presence of ambient shear flows. Our findings demonstrate that when the total mass of the cell density is below a specific threshold, the solution remains globally regular as long as the flow is sufficiently strong. The primary difficulty in our analysis stems from the fast creation of chemical gradients due to strong shear advection.
\end{abstract}
 
\setcounter{tocdepth}{1}{\small\tableofcontents}

\section{Introduction}
We consider the parabolic-parabolic Patlak-Keller-Segel systems (PKS), which model the chemotaxis phenomena in fluid: 
\begin{align}\label{ppPKS_basic}
\lf\{\begin{array}{cc}\ba\pa_t n+&Au(t,y)\pa_x n+\na \cdot( n\na \cc)=\de n,\\
\pa_t  \mathfrak C+&Au(t,y)\pa_x  \mathfrak C=\de  \mathfrak C+n-\overline{n},\\
n(t&=0 )=n_{\mathrm{in}} ,\quad  \mathfrak C(t=0 )=\cc_{\mathrm{in}},\\
(x,&\by)=(x,y,z)\in\mathbb{T}^3=(-\pi,\pi]^3,.\ea\end{array}\right.
\end{align} 
\ifx\textcolor{red}{Today Tarek tells me that the following shear flow (?)
$u(t,y ,z )=(\cos(z +\log(1+t)),0)$ have enhanced dissipation estimate:
\begin{align}
\|\rho_{\neq}(t)\|_2\leq C_{ED}\|\rho_{\neq;\mathrm{in}}\|_2e^{-\delta \nu^{1/3}t}, \quad \forall {t\in[0,\infty)}.
\end{align} Therefore, we might be able to use this time dependent shear flow on the $\Torus^3$ to derive the $8\pi$ critical mass. By alternating these flows, we might get a flow suppressing two dimension $(u(t,y),0,0)$ and $(0,u(t,x),0)$. But the gradient is also growing fast? Is it possible to redo Section \ref{Sec:Linfty_est} with alternating flow? }
\fi
We define $n,  \mathfrak C$ as the cell and chemo-attractant densities, respectively. The evolution of the cell density  $n$ incorporates the chemical-induced aggregation, diffusion, and fluid advection. Throughout the paper, we consider the fluid fields of the form $(Au(t,y),0,0)$, with $A:=\|Au\|_{L_{t,y}^\infty}$ denoting their magnitudes. Next, we focus on the equation for the chemo-attractant density $\mathfrak C$. Without altering the dynamics, one can normalize the equation by subtracting the spatial average $\overline{n}:=\fint_{\Torus^3} n$. Finally, we assume that the initial $\mathfrak C$-density is average-free, i.e., $\overline{\cc_{\mathrm{in}}} =0.$ One can check that the average-free property is preserved in time, i.e., $\overline{\cc}(t)\equiv0$. 

A simple rescaling in time yields the following equivalent system to \eqref{ppPKS_basic}:
\begin{align}
\label{ppPKS}\lf\{\begin{array}{rr}\ba\pa_t n+&u\pa_x n={A}^{-1}(\de n-\na \cdot( n\na \cc)),\\
\pa_t \cc+&u\pa_x \cc={A}^{-1}(\de \cc+n-\overline{n}),\\
n(t&=0)=n_{\mathrm{in}} ,\quad \cc(t=0 )=\cc_{\text{in}}.\ea\end{array}\rg. 
\end{align}
Since this reformulation is more convenient for presentation purposes, we focus on \eqref{ppPKS} from now on. 

The system \eqref{ppPKS_basic} is a simplified model among a large family of the coupled Patlak-Keller-Segel-fluid systems. The literature on the subject is vast, and we refer the interested readers to the following papers, \cite{Lorz10,Lorz12,LiuLorz11,DuanLorzMarkowich10,FrancescoLorzMarkowich10, Winkler12,TaoWinkler,ChaeKangLee13,
KozonoMiuraSugiyama,Tuval05, GongHe20} and the references therein.

If there is no ambient fluid flows, i.e., $A=0$, the equation is the classical parabolic-parabolic PKS equation. The PKS equations were first derived in  \cite{Patlak,KS}. The literature on the the classical parabolic-parabolic PKS equations and their variants  is large, and we refer the interested readers to the papers \cite{CarrapatosoMischler17,EganaMischler16,CalvezCorrias,Schweyer14,Winkler13, BlanchetEJDE06,BlanchetCarrilloMasmoudi08,EganaMischler16, Biler95} and the references within. We summarize the results on the blow-up and global regularity result of the classical parabolic-parabolic PKS equations here. In two-dimension, the total mass of cells $\mathfrak{M}:=\|n\|_{L^1}$ characterizes the long time behavior of the solution. If the total mass is strictly less than $8\pi$, the solutions are globally regular, \cite{CalvezCorrias}, \cite{Mizoguchi13}. On the other hand, if the total mass is large enough, singularities  form in a finite time, see, R. Schweyer \cite{Schweyer14}. In dimension three and higher, the PKS equations become supercritical.  In the paper \cite{Winkler13}, the author showed that there exist solutions,  which have arbitrary small masses, blow up in a finite time. 

In this paper, we study the long-time behavior of the system \eqref{ppPKS_basic}. In work \cite{He}, the author demonstrates that strictly monotone shear flows could suppress the chemotactic blow-up in \eqref{ppPKS_basic} in two-dimension. In a recent development (\cite{ZengZhangZi21}),  the authors extend this result to a coupled Patlak-Keller-Segel-Navier-Stokes system. However, further developments are required to understand the fluid flow-induced suppression of the blow-up mechanism in the 3D-system \eqref{ppPKS_basic}. We initiate this line of study in \cite{ElgindiHe22I}. In that work, we establish the suppression of blow-up results by choosing alternating shear flows. The main challenges in analyzing the parabolic-parabolic system \eqref{ppPKS_basic} stem from the fast chemical gradient $\na\cc$ creation driven by the strong fluid advection. This growth of $\na\cc$ leads to fast nonlinearity inflation, which has the potential to counter-balance the regularization effect induced by the fluid flow. 

Our first main theorem is the following:
\begin{theorem}
\label{thm_2}
Consider the solutions to the equation \eqref{ppPKS_basic} subject to initial data $0<n_{\mathrm{in}}\in   H^{M}(\Torus^3)$, $\cc_{\mathrm{in}}\in  H^{M+1}(\Torus^3)$ with $ M\geq 4$.  Assume that the total mass of cells is bounded \begin{align}\label{mss_cnstr0}\|n_{\mathrm{in}}\|_{L^1(\Torus^3)}<8\pi|\Torus|.\end{align}
Then there exist a time-dependent shear flow $(u_A(t,y ),0,0)$ with $\|u_A\|_{L_{t}^\infty W_{y}^{M+4} }\leq 1$ and a threshold \[A_0(\Mass,M,\|n_{\mathrm{in}}\|_{ H^{M}}, \|\lan \cc\ran_{\mathrm{in}}\|_{  H^{M+1}})\] such that the solutions $(n,\cc)$ to  \eqref{ppPKS_basic} remain globally regular for all time provided that the magnitude $A$ of the shear is greater than $A_0$. 
\end{theorem}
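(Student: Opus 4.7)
The plan is to reduce the long-time behavior of the full 3D system to that of an essentially 2D parabolic-parabolic PKS on $\Torus^2_{y,z}$, for which the mass constraint \eqref{mss_cnstr0} is precisely subcritical. Working with the rescaled formulation \eqref{ppPKS}, I decompose each unknown into its $x$-zero mode and remainder, $n=\lan n\ran+n_{\neq}$ and $\cc=\lan\cc\ran+\cc_{\neq}$, where $\lan f\ran(t,y,z):=\fint_{\Torus}f(t,x,y,z)\,dx$. Averaging \eqref{ppPKS} in $x$ eliminates the transport term and yields
\begin{align*}
\pa_t \lan n\ran &= A^{-1}\de_{y,z}\lan n\ran - A^{-1}\na_{y,z}\cdot(\lan n\ran\,\na_{y,z}\lan\cc\ran) - A^{-1}\lan\na\cdot(n_{\neq}\na\cc_{\neq})\ran,\\
\pa_t \lan\cc\ran &= A^{-1}\de_{y,z}\lan\cc\ran + A^{-1}(\lan n\ran-\overline{n}),
\end{align*}
i.e.\ a 2D parabolic-parabolic PKS on $\Torus^2$ forced by a quadratic source in the nonzero modes. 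Since $\|\lan n\ran\|_{L^1(\Torus^2)}=\|n_{\mathrm{in}}\|_{L^1(\Torus^3)}/(2\pi)<8\pi$, the unforced version is globally regular by the sharp 2D theory of \cite{CalvezCorrias, Mizoguchi13}.

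\textbf{Shear design and enhanced dissipation.} For the shear I would take $u(t,y)=\cos(y+\phi(t))$ with a slowly sliding phase such as $\phi(t)\sim\log(1+t)$, normalized so that $\|u\|_{L^\infty_t W^{M+4}_y}\leq 1$. A fixed single-frequency shear has critical points where enhanced dissipation degenerates, whereas a continuously evolving phase mixes every $y$-layer of the nonzero modes at the sharp rate. Using a hypocoercive weighted energy functional adapted to the moving phase, I would prove an enhanced dissipation estimate of the form
\begin{align*}
\|n_{\neq}(t)\|_{H^M}+\|\na\cc_{\neq}(t)\|_{H^M}\lesssim e^{-\delta A^{-1/3}t}\bigl(\|n_{\neq;\mathrm{in}}\|_{H^M}+\|\na\cc_{\neq;\mathrm{in}}\|_{H^M}\bigr),
\end{align*}
valid on the maximal existence interval whenever $A\geq A_0$. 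The estimate must be obtained at the $H^M$ level so that the quadratic source $\na\cdot(n_{\neq}\na\cc_{\neq})$ in the zero-mode equation decays like $e^{-2\delta A^{-1/3}t}$ and is small in $L^1_t H^{M-1}$, with norm $\lesssim A^{1/3}(\|n_{\mathrm{in}}\|_{H^M}^2+\|\cc_{\mathrm{in}}\|_{H^{M+1}}^2)$, placing it in the perturbative regime once $A$ is large.

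\textbf{Bootstrap and closure.} The main argument is then a bootstrap on the maximal existence interval $[0,T^\ast)$. Assume $\|n(t)\|_{H^M}+\|\cc(t)\|_{H^{M+1}}\leq 2K_0$, with $K_0$ determined by the initial data and the uniform-in-time bound for subcritical 2D parabolic-parabolic PKS. Under this hypothesis the enhanced dissipation estimate applies and gives exponential decay of the nonzero modes. The zero-mode system is then a subcritical 2D PKS perturbed by an $L^1_t H^{M-1}$-small source; standard continuation arguments below the $8\pi$ threshold, using the free-energy dissipation inequality together with an Adams--Moser--Trudinger type log-interpolation, give a uniform-in-time $H^M$ bound for $(\lan n\ran, \lan\cc\ran)$ depending only on $\|n_{\mathrm{in}}\|_{H^M}$ and $\|\lan\cc\ran_{\mathrm{in}}\|_{H^{M+1}}$, matching the dependence advertised for $A_0$. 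Combining the two ingredients improves the bootstrap hypothesis and closes the argument for $A$ sufficiently large.

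\textbf{Main obstacle.} The central difficulty, emphasized by the author, is that in the parabolic-parabolic setting $\cc$ is itself advected by the shear, so $\na\cc_{\neq}$ can grow as fast as $A\cdot t$ on $O(1)$ timescales before enhanced dissipation activates, and naive energy estimates forfeit a power of $A$ in the nonlinearity $\na\cdot(n\na\cc)$. Closing the estimates hinges on designing a time-dependent weighted $H^M$ energy whose weight is adapted to the phase $\phi(t)$ and whose time-derivative absorbs this fast-growing gradient, together with the structural observation that the $x$-averaged part of $\na\cc$ is controlled by the 2D theory independently of $A$, so only $\na\cc_{\neq}$ must be managed against the shear-enhanced smoothing.
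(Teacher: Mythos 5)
Your overall architecture (zero-mode/remainder splitting, a time-dependent shear with $A^{-1/3}$ enhanced dissipation, and a bootstrap pairing the subcritical 2D free-energy/log-HLS theory for $(\lan n\ran,\lan\cc\ran)$ with decay of the nonzero modes) is indeed the paper's strategy. The gap is in the enhanced dissipation step: you assert the nonlinear estimate $\|n_{\neq}(t)\|_{H^M}+\|\na\cc_{\neq}(t)\|_{H^M}\lesssim e^{-\delta A^{-1/3}t}(\cdots)$ ``on the maximal existence interval whenever $A\geq A_0$'' with no hypothesis on the size of the initial nonzero modes, and this cannot be rescued by a weight ``adapted to the phase $\phi(t)$''. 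The actual mechanism is a functional $F_M$ \eqref{F_M_sect_2} weighted by powers of $A$ attached to the number of $x$-derivatives (to absorb the commutators $[\pa_y^j,u]\pa_x^{i+1}$ and the $A^{-1}n_\nq$ source in the $\cc_\nq$-equation), whose evolution obeys $\frac{d}{dt}F_M\le CA^{-1}F_M^2+CA^{-1/2}F_M(\cdots)$; the quadratic term is perturbative over the enhanced-dissipation window of length $\delta^{-1}A^{1/3}$ only if $F_M$ is initially $O(1)$, i.e.\ under the smallness \eqref{smll_nq}, $A^{M+1}\|n_{\mathrm{in};\nq}\|_{H^M}^2+A^{M+2}\|\cc_{\mathrm{in};\nq}\|_{H^{M+1}}^2\le C$, which fails for generic data once $A$ is large. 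This is why Theorem \ref{thm_1} carries that smallness as a hypothesis, and why Theorem \ref{thm_2} is deduced in two stages: a separate short-time result (Theorem \ref{thm:short_t}, from \cite{ElgindiHe22I}) shows that by time $A^{1/3+\te(M)}$ the nonzero modes have decayed to $C\exp\{-A^{\te}/C\}$ while the zero modes remain bounded, and only then does the long-time bootstrap of Theorem \ref{thm_1} start from data satisfying \eqref{smll_nq}. Your proposal omits this ramp-in stage entirely, and without it the bootstrap does not close for general $H^M$ data.

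Two secondary points. Your claim that the quadratic source in the zero-mode equation has $L^1_tH^{M-1}$ norm ``$\lesssim A^{1/3}(\|n_{\mathrm{in}}\|_{H^M}^2+\|\cc_{\mathrm{in}}\|_{H^{M+1}}^2)$, placing it in the perturbative regime once $A$ is large'' is inconsistent as written, since $A^{1/3}$ times the data grows with $A$; the smallness comes from the prefactor $A^{-1}$ in \eqref{ppPKS_n}, giving a net $A^{-2/3}$. And the uniform-in-time $H^M$ bound for the forced 2D parabolic-parabolic system is not ``standard'': the best prior bound (Calvez--Corrias) is only exponential in time, and one must pass through the chemical-potential minimization \eqref{ChemEngMin}, comparing with the parabolic-elliptic potential, before the logarithmic HLS inequality \eqref{log-HLS} on $\Torus^2$ can be applied to extract the $L\log L$ bound.
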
\myc{
\footnote{
\myb{The following theorem should be true. 
\begin{theorem}
Let $n_{\text{in}}, \cc_{\text{in}}$ be $C^\infty(\Torus)$ and have support in the strip $y\in[-\frac{\pi}{4}, \frac{\pi}{4}]$. Let $U_A=A\sin(y)$ and $\|n_{\text{in}}\|_{L^1}\leq? 16\pi^2$. There exists a threshold $A_0(n_{\text{in}}, \cc_{\text{in}})$ such that if $A\geq A_0$, the solution is globally smooth. 
\end{theorem}}}}To understand the intuition behind the suppression of chemotactic blow-up through shear flow, we first examine the blow-up mechanism in the classical setting ($A=0$). In the PKS-type systems, there are two competing forces - the nonlinear aggregation ($\na \cdot( n\na \cc)$) and the diffusion ($\de n$). The nonlinear aggregation drives cells to concentrate and form Dirac singularities. On the other hand, diffusion disperses the cells and regularizes the dynamics. In the subcritical regime, diffusion dominates, and the solutions remain smooth for all time. In the supercritical regime, however, aggregation dominates, and singularities form. One natural way to suppress the blow-up is by enhancing the diffusion effect. Diffusion can be enhanced by replacing the linear diffusion operator with nonlinear diffusion,  as seen in \cite{Blanchet09, BRB10, CalvezCarrillo06}. However, we seek to explore another enhanced diffusion phenomenon: the enhanced dissipation effect induced by fluid advection. Here, the fluid advection creates a drastic oscillation in the cell density, amplifying the Laplacian's dissipation effect. Once the dissipation effect dominates the nonlinear aggregation, suppression of blow-up follows. This argument is proven successful in dealing with supercritical parabolic-elliptic PKS equations and other nonlinear equations \cite{KiselevXu15, IyerXuZlatos, BedrossianHe16, FengFengIyerThiffeault20, CotiZelatiDolceFengMazzucato}. The enhanced dissipation phenomena were applied to other important fluid problems. For example, in the study of the nonlinear stability of the shear flows, the mixing-induced enhanced dissipation effect is crucial in deriving a sharp  stability threshold, see, e.g., \cite{BMV14, BVW16, BGM15I, BGM15II, BGM15III, ChenLiWeiZhang18, WeiZhang19,
WeiZhangZhao20,CotiZelatiElgindiWidmayer20}. On the other hand, enhanced dissipation plays a major role in understanding turbulence and anomalous dissipation. See, e.g., \cite{DrivasEtal19anomalous, BedrossianBlumenthalPunshonSmith19,BedrossianBlumenthalPunshonSmith191, BedrossianBlumenthalPunshonSmith192}. 

In this paper, we focuses on the enhanced dissipation effect of the shear flows. To this end, we consider the passive scalar equation associated with the shear flow $u$:
\begin{align}
\pa_t f+ u(t,Y) \pa_x f={A}^{-1}\de f,\quad f(t=0)=f_{\mathrm{in}}.\label{ps_intro_1}
\end{align}
Furthermore, we define the $x$-average $\lan f\ran$ and the remainder $f_{\neq}$ of a function $f\in L^1(\Torus^3)$:
\begin{align}
\lan f\ran(\by)=\frac{1}{|\mathbb{T}|}\int_{\mathbb{T}} f(x,\by)dx,\quad f_{\neq}(x,\by)=f(x,\by)-\lan f\ran(\by),\label{n_0_n_neq}
\end{align}
Most of the estimates introduced below are derived in a two-dimensional setting. However, they can be easily extended to $\Torus^3$.
  
We first consider the \emph{stationary} shear flow $u(t,y)=u(y)$. If the shear flow profile  $u(y)$ has only finitely many nondegenerate critical points ($u'(y_\dagger)=0,\, u''(y_\dagger)\neq0$), then the flow is called nondegenerate shear flow. Direct $L^2$ estimate of the equation \eqref{ps_intro_1} yields that the passive scalar solutions decay on the time scale $\mathcal{O}(A)$, i.e., 
\begin{align*}
\|f(t)-\overline{f}\|_{L^2}\leq \|f_{\text{in}}-\overline{f}\|_{L^2}\exp\{-A^{-1}\  t\}.
\end{align*}The decay is slow if the viscosity $A^{-1}$ is small. However, by focusing on the remainder $f_\nq$ part of the solution, one observes a much faster decay phenomenon. In their paper \cite{BCZ15}, J. Bedrossian and M. Coti Zelati applied the hypocoercivity technique (Villani, \cite{villani2009}) to show that for the nondegenerate shear flows, there exist positive constants $\delta_0, C_\ast $ such that the following estimate holds for a small viscosity $A^{-1}$:
\begin{align*} 
\|f _{\neq}(t)\|_{L^2}\leq C_\ast \|f _{\mathrm{in};\neq}\|_{L^2} e^{-\delta_0  A^{-1/2}|\log A|^{-2} t},\quad \forall t\geq 0.
\end{align*}
If the viscosity is small, the dissipation time scale $\mathcal{O}(A^{1/2}|\log A|^2)$ is much shorter than the heat dissipation time scale $\mathcal{O}(A )$. This fast decay of the remainder $f_\nq$ of the passive scalar solution \eqref{ps_intro_1} is called the \emph{enhanced dissipation}. Heuristically, one can view this phenomenon as a fast homogenization of the density $f$ in the shear direction. Later, D. Wei applied resolvent estimates and a Gearhart-Pr\"{u}ss type theorem (\cite{Wei18}) to derive the sharp estimate
\begin{align}\label{ED_intro_2} 
\|f _{\neq}(t)\|_{L^2}\leq C_\ast \|f _{\mathrm{in};\neq}\|_{L^2} e^{-\delta_0 A^{-1/2}t},\quad \forall t\geq 0.
\end{align}
In the paper \cite{CotiZelatiDrivas19}, M. Coti-Zelati and D. Drivas utilized stochastic methods to confirm that the $A^{-1/2}$-enhanced dissipation rate in \eqref{ED_intro_2} is sharp for nondegenerate stationary shear flows. The enhanced dissipation phenomena are believed to be a consequence of the phase mixing effect induced by shear flows. A quantitative relationship between this mixing effect of fluid flow and the enhanced dissipation is rigorously established in work \cite{ElgindiCotiZelatiDelgadino18}, \cite{FengIyer19}. Recently, D. Albritton, R. Beekie, and M. Novak applied H\"ormander's hypoelliptic technique to prove the sharp estimate \eqref{ED_intro_2} (\cite{AlbrittonBeekieNovack21}).   

Note that the above enhance-dissipation estimate is sharp for  \emph{stationary} shear flows, which leaves open the question of whether one can improve the dissipation rate by relaxing the \emph{stationary} constraint. The authors showed that for a sufficiently general family of time dependent shear flows, the enhanced dissipation rate is $\mathcal{O}(A^{-1/3})$.
\begin{theorem}[\cite{Elgindi20,ElgindiHe22I} ]\label{Thm:Lnr_ED}
Consider the solutions $f_{\neq}$ to the passive scalar equations \eqref{ps_intro_1}.  There exists a shear flow $u_A\in C^\infty_{t,y}$ such that the following enhanced dissipation estimate is satisfied
\begin{align}
\| S_{s}^{s+t}f_{\neq}(s)\|_{L ^2}\leq &C_\ast \| f_{\neq}(s)\|_{L^2}e^{-\delta_0  A^{-1/3}t},\label{ED_tmshr}
\end{align}
for all positive time $s,t\geq 0$. Here $S_{s}^{s+t}$ is the linear propagator associated with the equation $\eqref{ps_intro_1}_{u=u_A}$, i.e., $S_{s}^{s+t}f_\nq(s)=f_\nq(t).$ The constants $\delta_0 $ and $C_\ast $ are independent of $A$ and the solutions. Moreover, the shear flow $u_A$ is bounded, i.e., $\|u_A\|_{L_t^\infty W_y^{M,\infty}}\leq C(M),\quad \forall M\in\mathbb{N}.$
\end{theorem}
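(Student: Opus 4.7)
The plan is to construct $u_A$ as a piecewise-in-time alternating shear that, mode by mode in $x$, effectively behaves as a linear Kelvin shear and therefore achieves the $A^{-1/3}$ rate that no stationary bounded periodic profile can produce. The motivation is the exact computation for the unbounded linear shear $u(y)=y$ on $\mathbb{R}$: after Fourier transforming in both $x$ and $y$, each nonzero $x$-mode dissipates like $\exp(-A^{-1}k^2 t^3/3)$, giving the natural rate $A^{-1/3}$ over the time scale $A^{1/3}$. The sharp $A^{-1/2}$ obstruction in \eqref{ED_intro_2} for stationary bounded shears is caused entirely by the unavoidable critical points of a bounded periodic $u$, at which Kelvin shearing stalls; a time-dependent construction that prevents any fixed $y$ from remaining critical for long can break this barrier.

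Concretely, I would pick two smooth nondegenerate profiles $\phi_1,\phi_2\in C^\infty(\Torus)$ whose critical sets are disjoint (for instance $\phi_1(y)=\sin y$ and $\phi_2(y)=\cos y$), choose a switching period $\tau_A$ to be tuned, and define $u_A$ to equal $\phi_1$ on $[2j\tau_A,(2j+1)\tau_A)$ and $\phi_2$ on $[(2j+1)\tau_A,(2j+2)\tau_A)$, with temporal transitions mollified on a scale $o(\tau_A)$ to ensure $u_A\in C^\infty_{t,y}$ and $\|u_A\|_{L_t^\infty W_y^{M,\infty}}\le C(M)$ uniformly in $A$. I would then Fourier-decompose $f_\neq=\sum_{k\neq 0}\hat f_k(t,y)e^{ikx}$; each mode obeys $\pa_t\hat f_k+iku(t,y)\hat f_k=A^{-1}(\pa_y^2-k^2)\hat f_k$. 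On an interval where $u$ is frozen, Wei's resolvent/Gearhart--Pr\"uss estimate bounds the $L^2_y$ propagator by $C_\ast e^{-\delta_\ast A^{-1/2}|k|^{1/2}\tau_A}$. This per-interval bound alone yields only $A^{-1/2}$; the improvement to $A^{-1/3}$ comes from observing that mass which survives one interval $I_j$ must concentrate near the critical set of the active $\phi_i$, and those points are precisely where $\phi_{i+1}'\neq 0$, so on $I_{j+1}$ the next shear acts like a linear Kelvin shear on that support. Tuning $\tau_A\sim A^{1/3}$ then makes the two-interval composition contract by a universal factor $e^{-\delta_0}$, and summing over periods gives \eqref{ED_tmshr}.

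The main technical difficulty is turning the trapping-near-critical-set heuristic into a rigorous quantitative estimate, since Wei's theorem is a global spectral bound that does not directly produce spatial localization. I would attack this either through a hypocoercive energy functional $\mathcal{E}(t)=\|\hat f_k\|_{L^2}^2+\lambda_1(t)\|\pa_y\hat f_k\|_{L^2}^2+\dots$ whose weights track the instantaneous shear, in the spirit of Bedrossian--Coti Zelati but adapted to piecewise-constant-in-time coefficients, or through a localized Kelvin-mode decomposition of $\hat f_k$ around the (switching) critical points. Either approach reduces to controlling commutator terms generated at the switching times, which is the technical crux; once the estimate is established on $\Torus^2$, the extension to $\Torus^3$ is immediate by a further Fourier decomposition in $z$, since the shear has no $z$-dependence and $z$-modes decouple.
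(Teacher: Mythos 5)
You should note first that the paper itself contains no proof of Theorem \ref{Thm:Lnr_ED}: it is quoted verbatim from \cite{Elgindi20,ElgindiHe22I} (the construction is credited to Elgindi in the acknowledgments), so there is no internal argument to compare your sketch against; what follows judges the sketch on its own terms. Your heuristics are the correct ones: the $A^{-1/2}$ rate in \eqref{ED_intro_2} is saturated only because a bounded stationary periodic profile must have critical points \cite{Wei18,CotiZelatiDrivas19}, the relevant time scale for \eqref{ED_tmshr} is $O(A^{1/3})$, and a time-dependent profile whose critical set does not persist at any fixed $y$ can beat the stationary barrier. Alternating two nondegenerate profiles with disjoint critical sets on windows of length $\tau_A\sim A^{1/3}$, with mollified transitions, is a reasonable candidate and is close in spirit to the smoothly drifting profiles used in the cited works, and your remark that the $z$-dependence decouples on $\Torus^3$ is correct.

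The gap is that the entire theorem is carried by the one statement you do not prove: that the $L^2$ mass of $\widehat f_k$ surviving a window is quantitatively localized near the critical set of the frozen profile, and that the next window then contracts it by a factor bounded away from $1$ uniformly in $A$, $k$, the window index, and the starting time $s$ (the propagator $S_s^{s+t}$ must be controlled from arbitrary $s$). Wei's Gearhart--Pr\"uss bound contributes nothing on these windows, since $e^{-\delta A^{-1/2}\tau_A}=e^{-\delta A^{-1/6}}\to 1$, as you concede; so the per-window input you actually need is a \emph{local-in-$y$} enhanced dissipation (Kelvin-type) estimate for \eqref{ps_intro_1}: decay of $\chi(y)\widehat f_k$ for cutoffs supported where $|\phi_i'|\geq c$, with constants that survive the degeneration of the local rate as $|\phi_i'|\to 0$ (so a splitting in the distance to the critical set is forced), together with control of the localization errors $A^{-1}(\chi''\widehat f_k+2\chi'\partial_y\widehat f_k)$, which in turn needs a bound on $\|\partial_y\widehat f_k\|_{L^2}$ that grows linearly under shearing and must be propagated across infinitely many switching times without loss. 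Naming "hypocoercivity with time-dependent weights" or "a localized Kelvin-mode decomposition" identifies plausible tools but does not show that either closes; in particular you never verify that the two-window composition yields a universal contraction $e^{-\delta_0}$ rather than a constant degrading in the iteration, which is exactly where the uniform $C_\ast,\delta_0$ of \eqref{ED_tmshr} must come from. As written, the proposal is a credible program that reduces the theorem to an unproved localized-contraction lemma, not a proof.
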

Motivated by the results developed in the previous literature, we say that a shear flow $u$ exhibits \emph{$A^{-1/3}$-relaxation enhancing property} if all the solutions $f\in L_t^\infty L^2$ to the passive scalar equation \eqref{ps_intro_1} satisfy the decay estimate
\begin{align}\label{ED1/3}
\|f_\nq (s+t)\|_{L^2}\ \leq\  C_\ast\  \|f_\nq (s)\|_{L^2}\  \exp\lf\{-\delta_0\ {A^{-1/3}\ {t}}\rg\},\quad \forall s, t\geq 0. 
\end{align}
Here $C_\ast\geq 1,\,\delta_0\in(0,1)$ are constants depending only on the shear flow $u.$  

With the enhanced dissipation effect of shear flow introduced, we are ready to lay out the battle plan. First, we observe that the enhanced dissipation estimate  \eqref{ED_intro_2} is only valid for the remainder $f_\nq$ of the passive scalar solution $f$. To better understand this constraint, we consider passive scalar solutions which are homogeneous in the $x$-direction, i.e., $f(x,Y)=\lan f\ran(Y)$. Since these solutions solve the one-dimensional heat equation, the fluid transport effect is vacuous, and no enhanced dissipation is expected. To capture the heterogeneous nature of the shear flow enhanced dissipation effect, we decompose the PKS solution $n$ into the $x$-average part $\lan n\ran$ and the remainder $n_{\neq}$ using \eqref{n_0_n_neq}. The detailed equations for each component will be presented in the next section. 

The main idea of the proof is that we can treat the evolution of the remainder $n_{\neq}$ as a perturbation to the passive scalar equation \eqref{ps_intro_1}. Since the remainder of the passive scalar solution dissipates in a fast time scale, we expect that the remainder $n_{\neq}$  will decay quickly and not alter the dynamics of the whole system. On the other hand, as the $x$-average $\lan n\ran$ is invariant under the shear flow, it solves a modified parabolic-parabolic PKS equation in dimension two. Moreover, since the total mass of the solution is a critical quantity in dimension two, the $8\pi$-mass constraint \eqref{mss_cnstr0} guarantees higher regularity estimation on the solution.   

The above idea works well in the parabolic-elliptic PKS equations. However, new challenges arise when we come to the parabolic-parabolic PKS equations. In the paper \cite{He}, the author observed that the effect of the strong shear on the system \eqref{ppPKS_basic} is two-fold. On the one hand, it enhances the diffusion effect of cell density. However, on the other hand, it triggers a strong growth of the chemical gradient, destabilizing the cell density dynamics through aggregation nonlinearity. Fortunately, some smallness constraints are enough to control this instability.

The next main theorem in this paper is the following:
\begin{theorem}\label{thm_1}
Consider the solutions to the equation \eqref{ppPKS_basic} subject to initial data $n_{\mathrm{in}}\in H^{M}(\Torus^3)$, $\cc_{\mathrm{in}}\in  H^{M+1}(\Torus^3)$ with $M\geq 3$. Assume that the following assumptions hold:

\noindent
a) The flow $u$ is regular, i.e., $\|u\|_{L_t^\infty W_y^{M+3,\infty}}\leq C(M)<\infty$, and exhibits the ${A}^{-1/3}$-relaxation enhancing property \eqref{ED1/3}.

\noindent
b) There exists a universal constant $C$ such that the following smallness condition is satisfied
\begin{align}\label{smll_nq}
\myr{A^{M+1}\|n_{\mathrm{in};\nq}\|_{H^M(\Torus^3)}^2+A^{M+2}\|\cc_{\mathrm{in};\neq}\|_{ H^{M+1} (\mathbb{T}^3)}^2\leq C.}
\end{align}

\noindent
c) The total mass of cells is bounded, 
\begin{align}\label{mss_cnstr}
\|\lan n_{\mathrm{in}}\ran \|_{L^1_{Y}(\Dy)}=\frac{\|n_{\mathrm{in}}\|_{L_{x,Y}^1(\mathbb{T}^3)}}{|\mathbb{T}|}=\frac{\Mass}{2\pi}<\myr{8}\pi.
\end{align} 

\noindent
Then there exists a threshold $A_0(u,\Mass,\|n_{\mathrm{in}}\|_{ H^{M}}, \|\lan \cc\ran_{\mathrm{in}}\|_{  H^{M+1}})$ such that for any $A\geq A_0$, the solutions $(n,\cc)$ to the equation \eqref{ppPKS_basic} remain globally regular in time. 
\end{theorem}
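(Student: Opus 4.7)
The plan is to split the unknowns according to the shearing symmetry. Write $n = \langle n\rangle + n_{\neq}$ and $\cc = \langle\cc\rangle + \cc_{\neq}$ as in \eqref{n_0_n_neq}. Averaging \eqref{ppPKS} in $x$ eliminates the transport term and produces an effectively two-dimensional parabolic-parabolic PKS system on $\Dy$,
\begin{align*}
\pa_t \langle n\rangle &= A^{-1}\bigl(\de_Y \langle n\rangle - \na_Y \cdot \langle n\na\cc\rangle\bigr), \\
\pa_t \langle\cc\rangle &= A^{-1}\bigl(\de_Y \langle\cc\rangle + \langle n\rangle - \overline{n}\bigr),
\end{align*}
while the remainders $(n_{\neq}, \cc_{\neq})$ retain the full shear-transport operator $u\pa_x$ and are coupled to the averaged variables through the quadratic $\na\cdot(n\na\cc)_{\neq}$. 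This decomposition separates the two mechanisms at play: the chemotactic aggregation which lives almost entirely in the $\langle\cdot\rangle$ component, and the enhanced dissipation \eqref{ED1/3} which acts only on the $\neq$ component.

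For the averaged system I will invoke the classical two-dimensional subcritical-mass theory. Since $\|\langle n\rangle\|_{L^1(\Dy)} = \Mass/(2\pi) < 8\pi$ by \eqref{mss_cnstr}, free solutions of the 2D parabolic-parabolic PKS are globally regular \cite{CalvezCorrias,Mizoguchi13}; the additional source $\na_Y\cdot\langle n_{\neq}\na\cc_{\neq}\rangle$ has to be absorbed as a perturbation, which is where the smallness of the $\neq$-parts will enter. Granted an a priori bound on $\|n_{\neq}\|_{H^M}$ and $\|\cc_{\neq}\|_{H^{M+1}}$, I expect to obtain uniform-in-time control
\begin{align*}
\|\langle n\rangle(t)\|_{H^M} + \|\langle\cc\rangle(t)\|_{H^{M+1}} \leq K(\Mass,\|n_{\mathrm{in}}\|_{H^M},\|\langle\cc\rangle_{\mathrm{in}}\|_{H^{M+1}}).
\end{align*}

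For the remainder system I will propagate the $A$-weighted energy
\begin{align*}
\mathcal{E}_{\neq}(t) := A^{M+1}\|n_{\neq}(t)\|_{H^M}^2 + A^{M+2}\|\cc_{\neq}(t)\|_{H^{M+1}}^2,
\end{align*}
which by \eqref{smll_nq} is initially bounded by a universal constant. The bootstrap assumption $\mathcal{E}_{\neq} \leq 2C$ on a maximal interval $[0,T^*)$ must be strictly improved. Standard energy estimates supply heat dissipation of size $A^{-1}\|n_{\neq}\|_{H^{M+1}}^2$ and $A^{-1}\|\cc_{\neq}\|_{H^{M+2}}^2$, but these alone cannot beat the large weight $A^{M+1}$; here the $A^{-1/3}$-relaxation enhancing property \eqref{ED1/3} is crucial, replacing heat dissipation by genuine exponential decay at rate $\delta_0 A^{-1/3}$ on the appropriate time scale, applied via Duhamel to the linear propagator $S_s^{s+t}$. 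The nonlinear contributions split into harmless transport-type terms that the enhanced dissipation absorbs, and a dangerous piece $A^{-1}\na_Y\cdot(n_{\neq}\na\langle\cc\rangle)$ arising from the shear-driven growth of $\na\cc$ identified in \cite{He}; this term is estimated by the averaged bound $K$ above, while the reciprocal source $A^{-1}n_{\neq}$ in the $\cc_{\neq}$ equation forces the $\cc_{\neq}$-weight to be one power of $A$ larger than the $n_{\neq}$-weight, explaining the exponents $M+1$ and $M+2$ in \eqref{smll_nq}.

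Closing the argument requires choosing $A \geq A_0(u,\Mass,\|n_{\mathrm{in}}\|_{H^M},\|\langle\cc\rangle_{\mathrm{in}}\|_{H^{M+1}})$ large enough so that the enhanced dissipation strictly dominates every nonlinear contribution and improves the bootstrap bound $\mathcal{E}_{\neq} \leq 2C$ to $\mathcal{E}_{\neq} \leq \tfrac{3}{2}C$. A standard continuity argument, coupled to the 2D subcritical estimate on $(\langle n\rangle,\langle\cc\rangle)$, then yields $T^* = +\infty$ and hence $H^M$ control on $n = \langle n\rangle + n_{\neq}$ together with $H^{M+1}$ control on $\cc$, giving the global regularity claimed. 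The principal obstacle throughout is the shear-induced fast creation of $\na\cc$ emphasized in the introduction; it is addressed by asking for one more derivative on $\cc$ than on $n$ and by the delicate balancing of powers of $A$ encoded in $\mathcal{E}_{\neq}$.
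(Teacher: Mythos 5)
Your overall architecture (average/remainder splitting, bootstrap, enhanced dissipation for the $\neq$ modes, subcritical mass for the averaged 2D system) matches the paper, but two steps as you describe them would fail.

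First, the remainder estimate. Your bootstrap quantity $\mathcal{E}_{\neq}=A^{M+1}\|n_{\neq}\|_{H^M}^2+A^{M+2}\|\cc_{\neq}\|_{H^{M+1}}^2$ carries a \emph{flat} weight in $A$, and you only propagate boundedness ($\mathcal{E}_{\neq}\le 2C$ improved to $\tfrac32 C$). This cannot close. Differentiating the equations in $y$ produces commutators $[\pa_y^j,u]\pa_x$ which convert $y$-derivatives into $x$-derivatives; under a flat $H^M$ weight these terms are of size $O(\|u\|_{W^{M,\infty}})\,\mathcal{E}_{\neq}$ with \emph{no} smallness in $A$, so the energy can grow like $e^{Ct}$ with $C$ independent of $A$, while the relaxation-enhancing property \eqref{ED1/3} only acts on the $L^2$ norm and only on the time scale $A^{1/3}$; indeed even for the passive scalar the $H^M$ norm of $f_{\neq}$ undergoes transient growth in the $y$-derivatives that \eqref{ED1/3} does not see. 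This is precisely why the paper's functional \eqref{F_M_sect_2} is graded per $x$-derivative, $A^{i+\frac12}\|\pa_x^i\pa_y^j\pa_z^k n_{\neq}\|_2^2$ and $A^{i}\|\pa_x^i\pa_y^j\pa_z^k\cc_{\neq}\|_2^2$: the extra power of $A$ attached to each $\pa_x$ is what turns every commutator (Lemma \ref{Lem:comm}), the source $A^{-1}n_{\neq}$ in the $\cc_{\neq}$ equation (Lemma \ref{Lem:c_eq}), and the aggregation terms into contributions of size $A^{-1/2}F_M$, which are then harmless on intervals of length $\delta^{-1}A^{1/3}$ where the decay \eqref{FM_Decay_est} is extracted by comparison with the propagator $S_{t_\star}^{t_\star+\tau}$. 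Moreover, you need the remainder to \emph{decay} exponentially as in \eqref{HypED_FM}, not merely stay bounded: the averaged equations are forced by $\frac1A\na_Y\cdot\lan n_{\neq}\na_Y\cc_{\neq}\ran$, and with only $\mathcal{E}_{\neq}\le 2C$ this forcing contributes $O(t/A)$ to the free-energy, entropy and $L^2$ estimates, which is not uniformly small in time; the paper's time integrals close only because of the factor $e^{-2\delta t/A^{1/3}}$, giving a total contribution $O(A^{-2/3})$.

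Second, the averaged system. You propose to quote the classical subcritical-mass theory \cite{CalvezCorrias,Mizoguchi13} and absorb the $\neq$-forcing "as a perturbation", expecting uniform-in-time $H^M$ control. Those results give global existence, not uniform-in-time bounds (on $\rr^2$ the best known bound for the parabolic-parabolic system is exponential in time), and the paper stresses that this is exactly the obstruction: growing averages would feed back into the remainder equation and destroy the perturbative comparison with the passive scalar. The actual argument has to be redone from scratch on $\Torus^2$ with the forcing included: the free-energy dissipation \eqref{frentm_est}, the chemical-potential minimization lemma replacing the parabolic-parabolic potential by the parabolic-elliptic one \eqref{ChemEngMin}, the logarithmic HLS inequality on the torus (Lemma \ref{lem:log-HLS}) to get the $L\log L$ and $\|\na\cz\|_{L^2}$ bound \eqref{EntrBound} under $\Mass_0<8\pi$, and then the truncated-$L^2$, Moser-type $L^\infty$, and inductive $H^m$ estimates, all uniformly in $t$. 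This chain is the substantive content of the proof and cannot be replaced by a citation plus a perturbation remark; as written, your argument assumes the conclusion of that section.
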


With more effort, we are able to derive the following theorem for a short time:
\begin{theorem} [\cite{ElgindiHe22I}]\label{thm:short_t}
Consider the solutions $(n,\cc)$ to the equation \eqref{ppPKS} initiated from the data $n_{\mathrm{in}}\in  H^M(\Torus^3)$, $\cc_{\mathrm{in}}\in  H^{M+1}(\Torus^3),\ M\geq 5$. Define a parameter 
\begin{align}\label{te_M_chc}
\te(M)= \frac{1}{108(2+M)}.
\end{align} The shear flow $(u_A(t,y),0,0)$ is the one  in Theorem \ref{Thm:Lnr_ED}. There exists a threshold \[A_0=A_0(\|n_{\mathrm{in}}\|_{H^M}, \|\cc_{\mathrm{in}}\|_{H^{M+1}}, \|u_A\|_{L_t^\infty W^{M+3,\infty}}, M)\] such that if $A\geq A_0$, then there exists a universal constant $C$ such that the following estimate holds at time instance $A^{1/3+\te(M)}$,
\begin{align}\label{est_short_t}
\|n_\nq(A^{1/3+\te})\|_{H^{M-1}}^2+\|\cc_\nq(A^{1/3+\te})\|_{H^{M}}^2\leq C \exp\lf\{-\frac{A^{\te}}{C}\rg\}.
\end{align} Moreover, the $x$-average is bounded as follows
\begin{align}\label{est_shrt_t_0}
\|\lan n\ran (A^{1/3+\te}) \|_{H^M}^2+\|\lan \cc\ran(A^{1/3+\te})\|_{H^{M+1}}^2\leq \mathfrak{B}(\|n_{\mathrm{in}}\|_{H^M}, \|\cc_{\mathrm{in}}\|_{H^{M+1}}, \|u_A\|_{L_t^\infty W^{M+3,\infty}}, M). 
\end{align} 
\end{theorem}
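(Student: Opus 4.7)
The plan is to bootstrap a rough high-order energy estimate, valid uniformly on the short window $[0,A^{1/3+\te}]$, against the $A^{-1/3}$-enhanced-dissipation estimate \eqref{ED_tmshr} applied to the non-zero modes through Duhamel, and then close by a Gagliardo--Nirenberg interpolation. Two features of the rescaled system \eqref{ppPKS} drive the argument: the diffusion and nonlinearity both carry an $A^{-1}$ prefactor, so on a time window of length $A^{1/3+\te}$ their net contribution to the energy is $O(A^{-2/3+\te})\ll 1$ for $A$ large, while the enhanced dissipation simultaneously produces a factor $\exp(-\delta_0 A^\te)$ on the non-zero part.

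\emph{Step 1: rough energy estimate.} A standard PKS-type energy estimate at the level of $H^M\times H^{M+1}$ yields
\[
\frac{d}{dt}\bigl(\|n\|_{H^M}^2+\|\cc\|_{H^{M+1}}^2\bigr)\leq A^{-1}\,P\bigl(\|n\|_{H^M},\|\cc\|_{H^{M+1}},\|u_A\|_{L_t^\infty W_y^{M+3,\infty}}\bigr),
\]
for some polynomial $P$ depending on $M$. Gr\"onwall over an interval of length $A^{1/3+\te}$ produces a uniform bound by a constant $\mathfrak B$ depending only on $M$, the initial data, and $\|u_A\|_{L_t^\infty W_y^{M+3,\infty}}$; this is precisely \eqref{est_shrt_t_0}, and it also furnishes the high-norm control on $n_\nq$ and $\cc_\nq$ needed below.

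\emph{Step 2: Duhamel on the non-zero modes.} Splitting $n=\lan n\ran+n_\nq$, $\cc=\lan\cc\ran+\cc_\nq$, the non-zero modes satisfy
\[
\pa_t n_\nq+u\pa_x n_\nq=A^{-1}\de n_\nq+F_\nq,\qquad \pa_t\cc_\nq+u\pa_x\cc_\nq=A^{-1}\de\cc_\nq+A^{-1}n_\nq,
\]
with $F_\nq=-A^{-1}(\na\cdot(n\na\cc))_\nq$. A crucial structural observation is that \emph{every} term of $F_\nq$ carries at least one non-zero-mode factor, because the pure zero-mode piece $\na\cdot(\lan n\ran\na\lan\cc\ran)$ has trivial $\nq$-projection and $\pa_x\lan\cc\ran=0$; combined with Step 1 one obtains $\|F_\nq\|_{L^2}\leq CA^{-1}\mathfrak B(\|n_\nq\|_{H^2}+\|\cc_\nq\|_{H^2})$. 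Feeding Theorem \ref{Thm:Lnr_ED} into Duhamel's formula,
\[
\|n_\nq(t)\|_{L^2}\leq C_\ast e^{-\delta_0 A^{-1/3}t}\|n_{\mathrm{in};\nq}\|_{L^2}+C_\ast\int_0^t e^{-\delta_0 A^{-1/3}(t-s)}\|F_\nq(s)\|_{L^2}\,ds,
\]
and the analogous inequality for $\cc_\nq$, together with the interpolation $\|f_\nq\|_{H^2}\leq C\|f_\nq\|_{L^2}^{1-c/M}\mathfrak B^{c/M}$, closes a Gr\"onwall-style bootstrap. This yields $\|n_\nq(A^{1/3+\te})\|_{L^2}^2+\|\cc_\nq(A^{1/3+\te})\|_{L^2}^2\leq C\exp(-A^\te/C)$, and one last interpolation against the $\mathfrak B$-bound from Step 1 lifts this to \eqref{est_short_t} in $H^{M-1}\times H^M$, at the cost of a factor $1/M$ in the exponent.

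\emph{Main obstacle.} The delicate point, exactly as highlighted in the discussion following \eqref{ppPKS_basic}, is the rapid growth of $\na\cc$ driven by the shear: the commutator $[\pa_y,u\pa_x]=u'\pa_x$ converts $y$-derivatives into $x$-derivatives of $\cc_\nq$ and threatens the high-order energy estimate of Step 1. Carefully tracking how many times this commutator must be paid against the regularity gained back through the enhanced-dissipation Duhamel and the interpolation exponents is what dictates the explicit value $\te(M)=1/(108(2+M))$ in \eqref{te_M_chc}; $\te$ must be chosen small enough that the bootstrap is self-consistent at every step of the nonlinear coupling between $n_\nq$, $\cc_\nq$, and their zero-mode counterparts.
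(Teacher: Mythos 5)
The theorem you are proving is imported from \cite{ElgindiHe22I} and is not reproved in the present paper, so I measure your argument against the techniques the paper itself uses for the analogous estimates in Section \ref{Sec:ED}. Your Step 2 (Duhamel against the propagator of Theorem \ref{Thm:Lnr_ED}, closed by interpolation) is the right skeleton, but Step 1 contains a genuine gap that removes the inputs Step 2 needs. The claimed inequality
\begin{align}
\frac{d}{dt}\bigl(\|n\|_{H^M}^2+\|\cc\|_{H^{M+1}}^2\bigr)\leq A^{-1}P(\cdots)
\end{align}
is false: the diffusion and the nonlinearity do carry $A^{-1}$, but the transport term $u\pa_x$ does not. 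After the top-order antisymmetric cancellation, the energy identity for $\|\pa_y^j\pa_z^k f\|_2^2$ retains the commutator $-2\int \pa_y^j\pa_z^k f\,[\pa_y^j,u]\pa_x\pa_z^k f\,dV$, which is of size $\|u\|_{W^{M,\infty}}\|f\|_{H^M}^2$ with no power of $A$. Gr\"onwall over $[0,A^{1/3+\te}]$ then yields a factor $\exp(CA^{1/3+\te})$, not an $A$-independent constant $\mathfrak B$; concretely, $\pa_t\pa_y\cc_\nq$ contains $-u'\pa_x\cc_\nq$, so $\|\cc_\nq\|_{\dot H^j}$ genuinely undergoes polynomial-in-$t$ transient growth on this window (this is exactly the ``fast creation of chemical gradients'' the paper emphasizes). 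This breaks both \eqref{est_shrt_t_0} and the uniform $H^M$ bound you interpolate against to get \eqref{est_short_t}.

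The missing idea is to replace the isotropic $H^M$ norm by an $A$-weighted anisotropic functional in which each $x$-derivative carries a positive power of $A$, exactly as in $F_M$ of \eqref{F_M_sect_2} (or the short-time variant with weights of the form $A^{(1/3+\eta)2i}$). Then, as in Lemma \ref{Lem:comm}, the commutator $[\pa_y^j,u]\pa_x^{i+1}$ trades a $y$-derivative for an $x$-derivative and is paid for by the weight discrepancy, contributing only $O(A^{-1/2})$ times the functional, so that Gr\"onwall over a window of length $A^{1/3+\te}$ closes with an $A$-independent bound. Your ``main obstacle'' paragraph correctly names the commutator as the enemy, but treating it as merely constraining the numerical value of $\te$ is not enough: without the weighted functional, the Step 1 estimate fails for every $\te>0$, and with it, the rest of your outline (Duhamel, sublinear Gr\"onwall, interpolation down to $H^{M-1}\times H^{M}$) can be carried out.
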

As a corollary of Theorem  \ref{Thm:Lnr_ED}, Theorem  \ref{thm_1}, and Theorem \ref{thm:short_t}, we have Theorem \ref{thm_2}.

\myc{\footnote{\myb{Another potential way to give $A^{-1/3}$-R.E. flow is first to consider the strictly monotone shear flow, then extend its drivative to the whole real line through periodic extension. Finally we integrate the derivative to gain the full profile. }}}

In the next section, we lay down the battle plan of the paper. 

\ifx 
We also use the notation $A\lesssim B$ (or $A\gtrsim B$) to denote the fact that there exists a strictly positive constant $C$ such that $A\leq CB$ (or $A\geq \frac{1}{CB}$, respectively). Throughout the paper, the letter $C$ denotes constants which can change from line to line. 
Throughout the paper, we apply Fourier transformation in the $x$ variable or in the  $(x,z)$ variables. The  frequency variables corresponding $x$ and $z $ are denoted by $k$ and $\ell$, respectively. 
The notation $\widehat{(\cdot)}$ is used to denote the Fourier transform and the $(\cdot)^\vee$ is used to denote the inverse transform. If we consider the transformation only in the $x-$variable, the Fourier transform and its inverse has the following form
\begin{equation*}
\wh{f}_k(y ,z ) := \frac{1}{2\pi}\int_{-\pi}^\pi e^{-ikx} f(x,y ,z ) dx , \quad \check{g}(x,z ,z ) = \sum_{k=-\infty}^\infty g_k(y ,z ) e^{ikx}.
\end{equation*}
Similarly, we can define Fourier transform in the $x,z $-variables. In that case, we use $\wh f_{k,\ell}(y )$ to represent the transformed functions. The notation $|\pa_x|^{1/2}$ should be understood as Fourier multipliers with symbol $|k|^{1/2}$, i.e.,
\begin{align} 
 M(\pa_x,\pa_{z })f(x,y ,z )=\left(M(ik,i\ell)\widehat{f}(k,y ,\ell)\right)^\vee.
\end{align}\fi\ifx
We will also use the notation $S_{t_\star,t_\star+t}^k$ to denote the solution semigroup corresponding to the $k$-by-$k$ equation
\begin{align}
\pa_t \wh\rho_k+u(t,y )ik \wh\rho_k =\frac{1}{A} (\de_y-|k|^2) \wh\rho_k, \quad \,\forall y\in\Torus ^2,\label{k-by-k-PS}
\end{align}
which is initiating from $t=t_\star$.
\fi

\ifx
On the domain $\mathbb{T} \times [0,1] \times \mathbb{T}$ subject to the Neumann boundary condition for both chemical and the bacteria density, there should be a way to prove global in time boundedness of the solution using the free
energy method. Here the critical mass should be $4\pi$. Moreover, it might be possible to use the contradiction method to prove the PPPKS case. We just need to set that $\|\na c\|_\infty\lesssim \frac{1}{A^{...}} $ for all time.
\textcolor{red}{My guess: As long as the equation preserve certain kind of symmetry, then we can make the Neumann, Dirichlet boundary condition equivalent to the Periodic boundary condition with double or other times the size of the channel. This guessed equivalent relation might be useful. I hope the equality to hold.}
\fi

\section{Battle Plan}\label{Sec:Btstrp}

\ifx 
Define the $x$-average and remainder of $\cc$ as follows:
\begin{align}
\cz (t,y ,z )=\frac{1}{2\pi}\int_\Torus \cc(t, x, y ,z )dx,\ \ \cc_{\neq}(t,x,y ,z )= \cc(t,x,y ,z )-\cz (t,y ,z ).
\end{align}
\fi 
Following the ideas in the introduction, we write down the equations for the $x$-averages $\nz ,\, \cz $:
\begin{subequations}\label{ppPKS_0md}
\begin{align}
\pa_t \nz +&\frac{1}{A}\na_Y \cdot( \nz \na_Y \cz )+\frac{1}{A}\na_Y  \cdot\lan n_{\neq}\na_Y   \cc_{\neq}\ran=\frac{1}{A}\de \nz ,\label{ppPKS_n}\\
\pa_t \cz =&\frac{1}{A}\de \cz +\frac{1}{A}\nz -\frac{1}{A}\overline{n},\label{ppPKS_c}\\
\nz (t=&0,y ,z )=\lan n_{\mathrm{in}}\ran(y ,z ),\quad \cz (t=0,y ,z )=\lan \cc_{\mathrm{in}}\ran(y ,z ),\quad (y ,z )\in\mathbb{T}^2.\label{ppPKS_BC}
\end{align}
\end{subequations}
On the other hand, the equations for the remainders $n_{\neq},\, \cc_{\neq}$ are the following:
\begin{subequations}\label{ppPKS_nqmd}
\begin{align}
\pa_t n_{\neq}+u_A(t,y ) \pa_x n_{\neq}+&\frac{1}{A}\na_Y \cdot(n_{\neq}\na_Y \cz )+\frac{1}{A}\na \cdot(\nz\na \cc_{\neq} )+\frac{1}{A}\na\cdot(n_{\neq}\na \cc_{\neq})_{\neq}=\frac{1}{A}\de n_{\neq},\label{ppPKS_n_neq}\\
\pa_t \cc_{\neq}+u_A(t,y )\pa_x \cc_{\neq}=&\frac{1}{A}\de \cc_{\neq}+\frac{1}{A}n_{\neq},\label{ppPKS_c_neq}\\
n_{\neq}(t=0,x,y ,z )&=n_{{\mathrm{in}};\neq}(x,y ,z ),\quad \cc_{\neq}(t=0,x,y ,z )=\cc_{{\mathrm{in}};\neq}(x,y ,z ),\quad (x,y ,z )\in\Torus^3.
\end{align}
\end{subequations}

\ifx
Now we  treat the first and second equations in the system \eqref{ppPKS} as a perturbation of the passive scalar equation subject to small viscosity:
\begin{align}\label{PS_introduction} 
\pa_t \rho_{\neq}+u(t,y )\pa_x \rho_{\neq} =\frac{1}{A} \de \rho_{\neq},\quad \int_{\Torus}\rho_{\mathrm{in};\neq}(x,y ,z )dx \equiv 0.
\end{align}
If we do the Fourier transform in the $x$-variable and the $z $-variable, we obtain that
\begin{align}\label{PS_introduction_Fourier} 
\pa_t \wh\rho_{k,\ell}(t,y )+u(t, y )ik\wh \rho_{k,\ell}(t,y ) =\frac{1}{A} (\pa_{y y }-|k|^2-|\ell|^2 )\widehat \rho_{k,\ell}(t,y ).
\end{align}
It is derived in the papers \cite{BCZ15,Wei18} that if the shear flow is non-degenerate and time-independent, the remainder $\rho_{\neq}$ decay as follows:
\begin{align}
\|\rho_{\neq}\|_{L^2}\leq C \|\rho_{\mathrm{in};\neq}\|_{L^2}e^{-\delta \frac{1}{A ^{1/2}}t},\quad \forall t\geq 0.
\end{align}
If the shear flow is time-independent, these results are shown to be sharp in the work \cite{CotiZelatiDrivas19}. However, if we allow time-dependent shear flow, the enhanced dissipation rate can be improved. This is the content of the next lemma.
\myr{Use two lemmas. The first lemma on $[0, \nu^{-1/3}]$, the second lemma on rewinding.}
\begin{lem}\label{lem:ED} Consider solution $\rho_{\neq}$ to the equation  \eqref{PS_introduction} subject to the time varying shear flow $(u(t,y ),0,0)=(\cos(y +\log(1+t)),0,0)$ and the following average-zero constraint on the initial data
\begin{align}
\int_{\mathbb {T}}\rho_{\mathrm{in};\neq}(x,y ,z )dx=&0,\quad \forall y\in \Torus^2.
\end{align}
Then the following enhanced dissipation estimate holds
\begin{align}
\|\wh \rho_{k,\ell}(t)\|_2\leq &C \|\wh \rho_{k,\ell}(0)\|_2e^{-\delta  \frac{1}{A^{1/3}}t-\frac{1}{2A} (|k|^2+|\ell|^2)t},\\
\|\rho_{\neq}(t)\|_2\leq& C \|\rho_{\neq}(0)\|_2e^{-\delta  \frac{1}{A^{1/3}}t-\frac{1}{2A}  t},\quad \forall t\geq 0.%
\end{align}
\end{lem}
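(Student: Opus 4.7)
The plan is to combine the $A^{-1/3}$ enhanced dissipation of $u_A$ from Theorem~\ref{Thm:Lnr_ED} at the $L^2$ level with a careful at-most-polynomial-in-$A$ control of the high Sobolev norms on the window $[0,A^{1/3+\te}]$, and then to upgrade the $L^2$ decay of the non-zero modes to $H^{M-1}$ and $H^M$ decay via Gagliardo--Nirenberg interpolation. The particular value $\te(M)=1/(108(2+M))$ in \eqref{te_M_chc} is dictated by requiring the polynomial-in-$A$ losses from the high-norm bound to be beaten by the stretched-exponential gain $e^{-A^\te/C}$ after passing through the interpolation.

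\emph{Step 1 ($L^2$ enhanced dissipation of the non-zero modes).} I would first apply Theorem~\ref{Thm:Lnr_ED} to the $L^2$-norms of $n_\nq$ and $\cc_\nq$. Writing Duhamel's formula for \eqref{ppPKS_n_neq}--\eqref{ppPKS_c_neq} with the passive-scalar propagator $S_s^t$ attached to $u_A$ and treating the coupling and nonlinear terms as a forcing, the decay $\|S_s^tg_\nq\|_{L^2}\leq C_\star\|g_\nq\|_{L^2}e^{-\delta_0 A^{-1/3}(t-s)}$, combined with the $A^{-1}$ prefactor in front of every forcing term in \eqref{ppPKS_n_neq}--\eqref{ppPKS_c_neq}, yields by a Gr\"onwall-type argument on the coupled pair $(\|n_\nq\|_{L^2}^2,\|\cc_\nq\|_{L^2}^2)$
\begin{align}
\|n_\nq(t)\|_{L^2}^2+\|\cc_\nq(t)\|_{L^2}^2\leq C\bigl(\|n_{\mathrm{in};\nq}\|_{L^2}^2+\|\cc_{\mathrm{in};\nq}\|_{L^2}^2\bigr)\exp\bigl\{-\tfrac{\delta_0}{2} A^{-1/3}t\bigr\},
\end{align}
for $t\in[0,A^{1/3+\te}]$, provided the polynomial high-norm bound from Step~2 is available to control the nonlinearity $A^{-1}\bigl(\na\cdot(n\na\cc)\bigr)_\nq$. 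At $t=A^{1/3+\te}$ this gives the stretched-exponential factor $e^{-\delta_0 A^\te/2}$.

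\emph{Step 2 (polynomial high-norm growth).} The main technical task is to bound $\mathcal{E}(t):=\|n(t)\|_{H^M}^2+\|\cc(t)\|_{H^{M+1}}^2$ by at most $A^{\alpha(M)}$ on $[0,A^{1/3+\te}]$. I would split into zero and non-zero modes. The zero modes $\lan n\ran,\lan\cc\ran$ satisfy \eqref{ppPKS_0md}, where the transport term is entirely absent, so the usual 2D parabolic energy estimates, together with the $8\pi$-subcritical total mass \eqref{mss_cnstr}, produce an $A$-independent bound that in particular yields \eqref{est_shrt_t_0}. For the non-zero modes, each transport commutator $[\pa_y^j,u_A\pa_x]$ picks up a factor $\|u_A\|_{W^{j,\infty}}\lesssim 1$ together with an extra $\pa_x$; crucially, since $\pa_x$ annihilates the $x$-average, this $\pa_x$ always falls on $n_\nq$ or $\cc_\nq$, which enjoy enhanced dissipation. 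An induction on the derivative count, bootstrapping Step~1 at successively higher Sobolev norms and paying one power of $A^{1/3}$ per level, produces
\begin{align}
\sup_{t\in[0,A^{1/3+\te}]}\mathcal{E}(t)\leq A^{\alpha(M)}\,\mathfrak{B}_0\bigl(\|n_{\mathrm{in}}\|_{H^M},\|\cc_{\mathrm{in}}\|_{H^{M+1}},M\bigr),
\end{align}
with $\alpha(M)$ explicit and at most linear in $M$.

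\emph{Step 3 (interpolation) and the main obstacle.} Gagliardo--Nirenberg
\begin{align}
\|f_\nq\|_{H^{M-1}}\leq C\|f_\nq\|_{L^2}^{1/M}\|f_\nq\|_{H^M}^{(M-1)/M},
\end{align}
together with Steps~1 and~2, gives $\|n_\nq(A^{1/3+\te})\|_{H^{M-1}}^2+\|\cc_\nq(A^{1/3+\te})\|_{H^M}^2\leq A^{\gamma(M)}\exp\{-A^\te/C'\}$, and this is stretched-exponentially small precisely because $\te(M)=1/(108(2+M))$ is small enough to absorb the prefactor $A^{\gamma(M)}$ into $e^{-A^\te/(2C')}$. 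The real difficulty lies in Step~2: a naive energy estimate at the $H^M$-level would yield $e^{CA^{1/3+\te}}$-growth from the commutator $[\pa_y^j,u_A\pa_x]$, which no positive $\te$ could absorb. The resolution must genuinely exploit that every commutator term produced by $u_A\pa_x$ acts on $n_\nq$ (the non-zero part), which is controlled by the enhanced dissipation of Step~1; coupling the $L^2$ decay back into the high-norm estimate, and iterating this across $M$ levels of derivatives at the cost of one $A^{1/3}$-factor per level, is exactly what forces the explicit dimension-dependent denominator $108(2+M)$ in \eqref{te_M_chc}.
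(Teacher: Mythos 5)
There is a genuine mismatch here: the statement you were asked to prove is a \emph{linear} enhanced dissipation lemma for the passive scalar equation $\pa_t\rho_{\neq}+u(t,y)\pa_x\rho_{\neq}=A^{-1}\Delta\rho_{\neq}$ with the specific time-dependent shear $u(t,y)=\cos(y+\log(1+t))$, with the conclusion stated mode by mode for $\wh\rho_{k,\ell}$ and including the extra factor $e^{-(|k|^2+|\ell|^2)t/(2A)}$. Your proposal never touches this problem. Instead you sketch the proof of the \emph{nonlinear} short-time result (Theorem \ref{thm:short_t}): Duhamel for the coupled $(n_{\neq},\cc_{\neq})$ system, polynomial high-norm growth on $[0,A^{1/3+\te}]$, and Gagliardo--Nirenberg interpolation to explain $\te(M)=1/(108(2+M))$. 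That is a different statement, and, more importantly, your only source of decay in Step 1 is Theorem \ref{Thm:Lnr_ED} -- i.e., exactly the linear enhanced dissipation that Lemma \ref{lem:ED} asserts for this flow. With respect to the lemma actually in question, the argument is therefore circular: you assume the $e^{-\delta A^{-1/3}t}$ decay of the passive-scalar propagator rather than proving it.

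A correct attempt has to engage with the linear mechanism itself. Concretely: Fourier transform in $(x,z)$ so that each mode solves $\pa_t\wh\rho_{k,\ell}+ik\cos(y+\log(1+t))\wh\rho_{k,\ell}=A^{-1}(\pa_{yy}-k^2-\ell^2)\wh\rho_{k,\ell}$; peel off half of the constant damping to produce the factor $e^{-(k^2+\ell^2)t/(2A)}$; and then establish the $A^{-1/3}$ rate for the remaining shear-diffusion problem in $y$, which is the real content. The point of the time shift $\log(1+t)$ is that the critical points of the cosine profile move in time, so the $A^{-1/2}$ bottleneck of stationary nondegenerate shears is avoided; making this quantitative requires a hypocoercivity-type functional with time-dependent weights (or the argument of \cite{Elgindi20,ElgindiHe22I}, e.g.\ working in the shifted variable $y+\log(1+t)$), after which the $L^2$ bound on $\rho_{\neq}$ follows by summing over $(k,\ell)$ with $|k|\geq 1$ via Plancherel. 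Note also that the paper itself does not prove this lemma internally -- it imports the corresponding decay as Theorem \ref{Thm:Lnr_ED} from the cited references -- so a self-contained proof had to supply precisely this linear analysis, none of which appears in your proposal.
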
 
\fi
To prove the suppression of blow-up, we do a bootstrap argument. One has the following local well-posedness result through standard argument. \myc{(? Check!?)}
\begin{theorem}\label{thm:lcl_exst}
Consider solutions $n,\cc$ to the equation \eqref{ppPKS_basic} subject to initial data $n_{\mathrm{in}}\in H^{M}(\Torus^3),\, c_{\mathrm{in}}\in H^{M+1}(\Torus^3),\, M\geq 3$ and regular flow $u\in L_t^\infty W^{M+5,\infty}$. There exists a small constant $T_\varepsilon(\|n_{\mathrm{in}}\|_{H^M},\|c_{\mathrm{in}}\|_{H^{M+1}})$ such that the solution exists on the time interval  $[0,T_\varepsilon]$. 
\end{theorem}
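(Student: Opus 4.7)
The statement is a standard parabolic local well-posedness result, so the plan is to carry out a Picard iteration for the coupled mild formulation. Let $T_A(t)$ denote the solution semigroup of the linear shear-heat equation $\pa_t f + Au(t,y)\pa_x f = \de f$ on $\Torus^3$; since $u$ is independent of $x$ and $z$ and $T_A(t)$ commutes with any $x$- or $z$-Fourier multiplier, this semigroup is bounded on $H^s(\Torus^3)$ for every $s\leq M+5$ with a constant depending only on $\|u\|_{L_t^\infty W_y^{M+5,\infty}}$ and on the time horizon. Writing \eqref{ppPKS_basic} in Duhamel form gives
\begin{align}
n(t) &= T_A(t)n_{\mathrm{in}} - \int_0^t T_A(t-s)\, \na\!\cdot\!\bigl(n(s)\na\cc(s)\bigr)\,ds,\\
\cc(t) &= T_A(t)\cc_{\mathrm{in}} + \int_0^t T_A(t-s)\bigl(n(s)-\overline n\bigr)\,ds.
\end{align}

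The plan is to run a fixed point in the Banach space
\[
X_T := C\bigl([0,T];H^M(\Torus^3)\bigr)\times C\bigl([0,T];H^{M+1}(\Torus^3)\bigr),
\]
and to show that for a suitable radius $R=2(\|n_{\mathrm{in}}\|_{H^M}+\|\cc_{\mathrm{in}}\|_{H^{M+1}})$ and a sufficiently small $T=T_\varepsilon$ depending only on $R$, $M$, and $\|u\|_{L_t^\infty W_y^{M+5,\infty}}$, the map $\Phi(n,\cc)$ defined by the right-hand sides above preserves the closed ball $B_R\subset X_T$ and contracts on it. The key analytic inputs are: (i) the smoothing estimate $\|T_A(t-s)\na\!\cdot\! g\|_{H^M}\lesssim (t-s)^{-1/2}\|g\|_{H^M}$, obtained from the heat semigroup (the shear acts as a bounded unitary-type operator and does not spoil the parabolic regularization); (ii) the algebra property $\|fg\|_{H^M}\lesssim \|f\|_{H^M}\|g\|_{H^M}$, valid for $M\geq 3>3/2$ on $\Torus^3$; and (iii) the Moser-type bound $\|n\na\cc\|_{H^M}\lesssim \|n\|_{H^M}\|\na\cc\|_{H^M}\lesssim \|n\|_{H^M}\|\cc\|_{H^{M+1}}$. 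Combining (i)--(iii) gives, for $(n,\cc)\in B_R$,
\begin{align}
\|\Phi_1(n,\cc)(t)\|_{H^M}\ \leq\ C\|n_{\mathrm{in}}\|_{H^M} + C R^2 \int_0^t (t-s)^{-1/2}\,ds\ \leq\ C\|n_{\mathrm{in}}\|_{H^M} + C R^2 T^{1/2},\\
\|\Phi_2(n,\cc)(t)\|_{H^{M+1}}\ \leq\ C\|\cc_{\mathrm{in}}\|_{H^{M+1}} + C\int_0^t (t-s)^{-1/2}\|n(s)\|_{H^M}\,ds\ \leq\ C\|\cc_{\mathrm{in}}\|_{H^{M+1}} + C R T^{1/2},
\end{align}
so choosing $T_\varepsilon$ small enough (depending only on $R$ and the constants above) keeps $\Phi$ inside $B_R$. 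The contraction estimate for $\Phi(n_1,\cc_1)-\Phi(n_2,\cc_2)$ is obtained by the same arguments after writing $n_1\na\cc_1 - n_2\na\cc_2 = (n_1-n_2)\na\cc_1 + n_2\na(\cc_1-\cc_2)$.

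Once a unique fixed point is obtained in $X_{T_\varepsilon}$, parabolic bootstrap yields the claimed regularity class (in particular $n\in C([0,T_\varepsilon];H^M)\cap L^2([0,T_\varepsilon];H^{M+1})$ and analogously for $\cc$). The step I expect to be the most delicate is not the smoothing itself, but verifying that the semigroup $T_A(t)$ is well-behaved on $H^M(\Torus^3)$ uniformly in $A$: because the drift coefficient $Au$ scales with $A$, a direct energy estimate on $\pa^\alpha n$ produces commutators of the form $[\pa^\alpha, Au\pa_x]$ which need to be absorbed. The assumption $u\in L_t^\infty W_y^{M+5,\infty}$ is precisely what allows these commutators to be estimated by $A\|u\|_{W^{|\alpha|+1,\infty}}\|n\|_{H^M}$ and controlled through Grönwall's inequality on $[0,T_\varepsilon]$, yielding a bound that is continuous (though not uniform) in $A$; this is acceptable because the local existence time is allowed to depend on the flow. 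With these ingredients in place the fixed point argument, a standard continuation criterion, and the positivity of $n$ (propagated by the maximum principle from the strictly positive initial datum in Theorem \ref{thm_2}) complete the proof.
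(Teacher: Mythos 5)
The paper supplies no proof of this theorem at all: it is asserted to hold ``through standard argument,'' with an author-side comment in the source even flagging it for verification. Your Picard iteration on the mild (Duhamel) formulation is a correct and standard way to supply the missing argument, and the structure of the estimate — $H^M$ algebra, Moser product bound, and parabolic smoothing to control $\na\cdot(n\na\cc)$ — is right.

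Two places deserve tightening. First, since $u=u(t,y)$ is time-dependent, the shear-heat evolution is a two-parameter propagator $S(s,t)$, not a one-parameter semigroup $T_A(t-s)$; this is notational but matters because the smoothing bound has to be stated for $S(s,t)$ uniformly in the starting time $s\in[0,T_\varepsilon]$. Second, the smoothing estimate $\|S(s,t)\na\cdot g\|_{H^M}\lesssim (t-s)^{-1/2}\|g\|_{H^M}$ is not free for the shear-heat operator; it requires a Duhamel comparison against $e^{(t-s)\de}$, writing $Au\pa_x f$ as a source, and a short beta-function/Volterra iteration to close, with constants depending on $A$ and $\|u\|_{L_t^\infty W^{M+1,\infty}_y}$. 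Your parenthetical (``the shear acts as a bounded unitary-type operator and does not spoil the parabolic regularization'') gestures at this but does not discharge it, whereas your closing remark about the commutator $[\pa^\alpha,Au\pa_x]$ is exactly the calculation needed, so the two observations should be merged. As you correctly note, this makes $T_\varepsilon$ depend on $A$ and on $u$, not solely on the initial-data norms, so the theorem's notation $T_\varepsilon(\|n_{\mathrm{in}}\|_{H^M},\|c_{\mathrm{in}}\|_{H^{M+1}})$ is slightly imprecise; this is harmless for how the result is used, since the bootstrap of Proposition~\ref{Pro:main} supplies the $A$-uniform bounds, but it should be recorded.

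Finally, a more elementary route that sidesteps the propagator smoothing entirely — and matches the energy-estimate style the paper uses everywhere else, e.g.\ in Section~\ref{Sec:ED} — is to run the $H^M\times H^{M+1}$ energy estimate directly on $\pa^\alpha n$ and $\pa^\alpha\cc$: the top-order transport contribution vanishes after integration by parts since $u$ is independent of $x$, the commutators are bounded by $A\|u\|_{W^{M,\infty}}\|n\|_{H^M}^2$, and the nonlinearity is closed by the $H^M$ algebra property and Grönwall. This yields the same $T_\varepsilon$ with less machinery and would make the local-existence proof stylistically consistent with the rest of the paper.
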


We also have the following blow-up criterion for the equation \eqref{ppPKS_basic}. We refer the interested readers to the appendix of \cite{He} for the proof in $2$-dimension. The 3-dimensional case is similar. \myc{(? Check!?)}
\begin{theorem}\label{thm:blwp_crt}
Consider solutions $(n,\cc)$ to the  equation \eqref{ppPKS_basic} subject to initial data $n_{\mathrm{in}}\in H^{M }(\Torus^3),\, \cc_{\mathrm{in}}\in H^{M+1}(\Torus^3),\, M\geq 3$ and regular flow $u\in L_t^\infty W^{M+5,\infty}$. If the $H^M\times H^{M+1}$-solutions $(n, \cc)$ blow up at time $T_\star<\infty$, then the $L^\infty$-norm of the solutions $n$ must approach infinity at $T_\star$, i.e., 
$\lim_{t\rightarrow T_\star}\sup_{0\leq s\leq t}\|n(s)\|_{L^\infty_{x,y}}=\infty.$
\end{theorem}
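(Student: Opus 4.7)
I argue by contrapositive: assume
\begin{align*}
K := \sup_{0\leq s < T_\star}\|n(s)\|_{L^\infty(\Torus^3)} < \infty,
\end{align*}
and show that $\Psi(t):=\|n(t)\|_{H^M}^2 + \|\cc(t)\|_{H^{M+1}}^2$ stays bounded as $t\uparrow T_\star$. The local existence result (Theorem \ref{thm:lcl_exst}) applied at any $t_0$ close enough to $T_\star$ then extends the solution strictly past $T_\star$, contradicting the definition of $T_\star$ as a blow-up time.

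The first step is to upgrade the $L^\infty$ control of $n$ to pointwise control of $\na\cc$. Equation \eqref{ppPKS_basic}$_2$ is linear parabolic for $\cc$, with smooth divergence-free drift $Au(t,y)\pa_x$ and source $n-\overline{n}\in L^\infty([0,T_\star]\times\Torus^3)$. Standard parabolic regularity (e.g.\ Duhamel against the shear-advected heat semigroup $S(t,s)$ together with the Aronson-type Gaussian kernel bound on $S(t,s)$, or $L^p$-maximal regularity for $p>3$ followed by Sobolev embedding $W^{2,p}\hookrightarrow C^{1,\al}$) therefore gives
\begin{align*}
\|\na\cc(t)\|_{L^\infty(\Torus^3)} \leq \Lambda\big(T_\star,A,\|u\|_{L_t^\infty W_y^{1,\infty}},K,\|\cc_{\mathrm{in}}\|_{H^{M+1}}\big),\qquad \forall t\in[0,T_\star).
\end{align*}

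The second step is a high-regularity energy estimate for $\Psi$. Pairing $D^\alpha n$ with \eqref{ppPKS_basic}$_1$ for $|\alpha|\leq M$ and $D^\beta\cc$ with \eqref{ppPKS_basic}$_2$ for $|\beta|\leq M+1$, the transport terms produce commutator contributions bounded by $A\|u\|_{W^{M+2,\infty}}\Psi$, while the chemotactic term $D^\alpha\na\cdot(n\na\cc)$ is expanded via Kato--Ponce tame product inequalities. Using the $L^\infty$ bounds $K$ on $n$ and $\Lambda$ on $\na\cc$ from Step 1, every factor below the highest order is bounded by a constant depending only on $K,\Lambda,\|u\|_{W^{M+2,\infty}}$, while the top-order factors appear linearly in $\Psi^{1/2}$ or are absorbed into the parabolic dissipation $\|\na n\|_{H^M}^2 + \|\na\cc\|_{H^{M+1}}^2$ via Young's inequality. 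The resulting differential inequality
\begin{align*}
\tfrac{d}{dt}\Psi \leq C(K,\Lambda,A,u,M)(1+\Psi)
\end{align*}
closes by Gr\"onwall on $[0,T_\star)$.

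The main delicacy is the top-order chemotactic contribution of schematic form $\int D^M n\cdot n\cdot D^{M+2}\cc$, since $D^{M+2}\cc$ is only controlled in $L^2$ through $\|\na\cc\|_{H^{M+1}}$, not in $L^\infty$. Here one uses Cauchy-Schwarz together with the $L^\infty$ bound on $n$ to write
\begin{align*}
\lf|\int D^M n\cdot n\cdot D^{M+2}\cc\rg|\leq K\|n\|_{H^M}\|\na\cc\|_{H^{M+1}}\leq \ep\|\na\cc\|_{H^{M+1}}^2 + C_\ep K^2\|n\|_{H^M}^2,
\end{align*}
where the first piece is absorbed into the dissipation on the left-hand side of the energy identity and the second is folded into the Gr\"onwall quantity. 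The intermediate Leibniz pieces are handled by the standard interpolations $\|D^j n\|_{L^p}\lesssim \|n\|_{L^\infty}^{1-\theta}\|n\|_{H^M}^{\theta}$ controlled by $K$ and $\Psi^{1/2}$. Once $\Psi$ is thereby bounded up to $T_\star$, the solution continues beyond $T_\star$, contradicting blow-up and proving the criterion.
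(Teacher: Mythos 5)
Your contrapositive/continuation argument is essentially correct, but note that the paper itself gives no proof of this criterion: it only points to the appendix of \cite{He} for the two-dimensional case and asserts the 3D case is similar, so there is nothing in-paper to compare against line by line. Your scheme (bound $\|\na \cc\|_{L^\infty}$ by treating the $\cc$-equation in \eqref{ppPKS_basic} as a linear parabolic problem with bounded source $n-\overline{n}$ and bounded smooth shear drift; then close an $H^M\times H^{M+1}$ energy estimate using Kato--Ponce, the $L^\infty$ bounds $K$ and $\Lambda$, and absorption of the top-order terms into the two dissipation terms; then restart with Theorem \ref{thm:lcl_exst} near $T_\star$) is exactly the standard route such criteria are proved by, and all constants being allowed to depend on $T_\star$, $A$ and $u$ makes the shear-induced gradient growth in the advected semigroup harmless since $T_\star<\infty$. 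Two routine points you should make explicit if you write this out in full: (i) in the $\cc$-energy estimate at order $M+1$ the source contribution $\int D^\beta (n-\overline{n})\, D^\beta \cc\, dV$ with $|\beta|=M+1$ involves more derivatives of $n$ than $H^M$ provides, so one derivative must be moved onto $\cc$ and the resulting $\|\na \cc\|_{H^{M+1}}$ factor absorbed into the dissipation, exactly as you do for the chemotactic top-order term; (ii) the transport terms do not merely produce commutators bounded by $A\|u\|_{W^{M+2,\infty}}\Psi$ --- the highest-order piece $\int A u\, \pa_x D^\al n\, D^\al n\, dV$ vanishes by antisymmetry because $u$ is independent of $x$, and this cancellation is what makes the commutator bound suffice. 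With these details supplied, the argument closes and is consistent with the proof strategy the paper delegates to \cite{He}.
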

To lay down the bootstrap, we first introduce the functional:
\myc{\footnote{\begin{align}
\dot F_{m}[n_\nq, \cc_\nq]:=&\sum_{i+j+k=m}A^{(1/3+\eta)2i+3\eta }\|\pa_x^i\pa_y^j\pa_z^k n_{\neq}\|_2^2+\sum_{i+j+k=m+1}A^{(1/3+\eta)2i}\|\pa_{x}^i\pa_y^j\pa_z^k\cc_\nq\|_2^2,\\
F_{M}=&\sum_{0\leq m\leq M}\dot F_{m}.
\end{align}
}}
\begin{align}
\label{F_M_sect_2} 
\quad F_{M}[n_\nq, \cc_\nq]=&\sum_{i+j+k\leq M}A^{i+\frac {1}{2}} \|\pa_x^i\pa_y^j\pa_z^kn_\nq\|_{L^2}^2+\sum_{i+j+k\leq M+1}A^{i}  \|\pa_x^i\pa_y^j\pa_z^k \cc_\nq\|_{L^2}^2.
\end{align}
We observe that thanks to the smallness constraint \eqref{smll_nq}, the initial value of the functional $F_M$ is bounded, i.e.,\begin{align}
F_M[n_{\text{in};\nq},c_{\text{in};\nq}]\leq C. \label{choice_L_1}
\end{align}

Assume that $[0,T_\star)$ is the largest interval on which the following hypotheses hold:

\noindent
\begin{subequations}\label{Hypotheses}

1) Remainders' enhanced dissipation estimates: 
\begin{align}
F_{M}[n_\nq(t), \cc_\nq(t)]\leq 4e^2F_{M}[n_{\text{in};\nq},\cc_{\text{in};\nq}] \exp\lf\{-\frac{2\delta t}{A^{1/3}}\rg\},\quad \forall t\in[0,T_\star). \label{HypED_FM}
\end{align}
\myr{Here $\delta=\delta(\delta_0, C_*)$ (with $\delta_0,\ C_*$ being chosen in Theorem \ref{Thm:Lnr_ED}) is defined in \eqref{Chc_delta}.}

2) Uniform-in-time estimates of the $x$-averages (zero-mode in $x$):
\begin{align}
\|\lan n\ran \|_{L_t^\infty([0,T_\star);H^M)}\leq& 2\mf B_{\lan n\ran,H^M}(\|\lan n_{\text{in}}\ran\|_{H^M}, \|\lan \cc_{\text{in}}\ran\|_{H^M},\Mass);\label{Hyp_0_nHM}\\
\|\na_Y   \lan \cc\ran \|_{L_t^\infty([0,T_\star);H^{M})}\leq& 2 \mf B_{\na \lan \cc\ran,H^{M}}(\|\lan n_{\text{in}}\ran\|_{H^M}, \|\lan \cc_{\text{in}}\ran\|_{H^M}, \Mass).\label{Hyp0_nac_HM}
\end{align} 
\myr{Here the bounds $\mf B_{\lan n\ran,H^M},\,\,\mf B_{\na \lan \cc\ran,H^{M}}$ are defined in \eqref{chc_B_avg}.}  
\end{subequations}
By the local well-posedness of the equation \eqref{ppPKS} in $H^M,\, M\geq 3$ (Theorem \ref{thm:lcl_exst}), we have that the interval $[0,T_\star)$ is non-empty. Also note that due to the smallness constraint \eqref{smll_nq}, we have that $F_M(0)\leq C$ for some constant $C$. 

The hypotheses \eqref{Hypotheses}, together with the $L^\infty$ Sobolev embedding, imply the uniform-in-time  $L^\infty_tL^\infty_{x,Y}$-bound on the solutions:
\begin{align}
\myr{ A^{1/2} }\|n_\nq(t)\|_{L_{x,Y}^\infty}^2+\myr{ A }\|\na \cc_\nq(t)\|_{L_{x,Y}^\infty}^2\leq C F_M[n_{\nq}(t)&,\cc_{\nq}(t)]\leq C F_M[n_{\text{in};\nq},\cc_{\text{in};\nq}],\quad\forall t\in[0,T_\star),\, M\geq 4;\label{Bd_Linf_nq}\\
\|n\|_{L_t^\infty([0,T_\star);L_{x,Y}^\infty)}+\|\na \cc\|_{L_t^\infty([0,T_\star);L_{x,Y}^\infty)}\leq& \mf B_{\infty}(F_M(0), \mf B_{\lan n \ran; H^M}, \mf B_{\na \lan \cc\ran; H^{M}}).\label{Bound_Linf}
\end{align}

We further define the following quantity
\begin{align}
\CC=&1+\Mass+\|n_{\mathrm{in}}\|_{H_{x,Y}^{M}}+\|\cc_{\mathrm{in}}\|_{ H_{x,Y}^{M+1}}+\sqrt{F_M(0)}+\mf B_{\nz ;H^M}+\mf B_{\na \cz ;H^{M}}+\mf B_{\infty}.\label{CC}
\end{align}

The remaining part of the paper is devoted to the proof of the following proposition:
\begin{pro}\label{Pro:main}
Consider the solution to the equation \eqref{ppPKS} subject to all the constraints in Theorem \ref{thm_1}. Then if the $A$ is chosen large enough depending on the initial data, then the following set of conclusions hold \begin{subequations}\label{Conclusions}
1)  Remainders' enhanced dissipation estimates: 
\begin{align}
F_M[n_\nq(t), \cc_\nq(t)] \leq \myr{2e^2} F_M[n_{\mathrm{in};\nq},\cc_{\mathrm{in};\nq}] \exp\lf\{-\frac{2\delta t}{A^{1/3}}\rg\},\quad \forall t\in[0, T_\star).\label{ConED_FM}
\end{align}
2) Uniform-in-time estimates of the $x$-averages:
\begin{align}
\|\nz \|_{L_t^\infty([0,T_\star);H_Y^M)}\leq& \mf B_{\nz ,H^M};\label{Con0_n_HM}\\
\|\na_Y   \cz \|_{L_t^\infty([0,T_\star);H_Y^M)}\leq&  \mf B_{\na\cz ,H^{M} }.\label{Con0nac_HM+1} 
\end{align}  
\end{subequations}
\end{pro}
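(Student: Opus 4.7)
My plan is to close the bootstrap by proving each of \eqref{ConED_FM} and \eqref{Con0_n_HM}--\eqref{Con0nac_HM+1} independently on $[0,T_\star)$, exploiting the slack between the hypothesized constants ($4e^2$, $2\mathfrak{B}_\bullet$) and the conclusion constants ($2e^2$, $\mathfrak{B}_\bullet$). Combined with the local well-posedness Theorem \ref{thm:lcl_exst} and the blow-up criterion Theorem \ref{thm:blwp_crt}, a standard continuity argument will then force $T_\star=+\infty$ and deliver Theorem \ref{thm_1}.

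For the remainder enhanced dissipation \eqref{ConED_FM}, I would test each equation in \eqref{ppPKS_nqmd} against the appropriately $A$-weighted derivatives and assemble a differential inequality for $F_M[n_\nq(t),\cc_\nq(t)]$. The linear transport terms, once treated via the $A^{-1/3}$-relaxation enhancing property supplied by hypothesis (a) and \eqref{ED1/3}, yield the leading dissipative contribution $-\,(4\delta/A^{1/3})\,F_M$; the specific weighting $A^{i+1/2}$ on $\pa_x^i\pa_y^j\pa_z^k n_\nq$ and $A^i$ on the corresponding $\cc_\nq$ derivatives is calibrated so that the gain from each $\pa_x$ matches the enhanced dissipation exponent and at the same time absorbs the source $A^{-1}n_\nq$ in the $\cc_\nq$ equation. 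The coupling terms $\tfrac{1}{A}\na_Y\!\cdot\!(n_\nq\na_Y\cz)$, $\tfrac{1}{A}\na\!\cdot\!(\nz\na\cc_\nq)$ and $\tfrac{1}{A}\na\!\cdot\!(n_\nq\na\cc_\nq)_\nq$ are then controlled using the pointwise bounds \eqref{Bd_Linf_nq}--\eqref{Bound_Linf}: the uniform $L^\infty$ control of $\nz$ and $\na\cz$ by $\mathfrak{B}_\infty$, together with the small factors $A^{-1/4}$ on $\|n_\nq\|_{L^\infty}$ and $A^{-1/2}$ on $\|\na\cc_\nq\|_{L^\infty}$ and the explicit $1/A$ prefactors, make each nonlinear contribution $o(A^{-1/3})F_M$ once $A$ is large. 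Gr\"onwall then halves the constant from $4e^2$ to $2e^2$.

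For the zero-mode bounds, I would treat \eqref{ppPKS_0md} as a two-dimensional parabolic-parabolic PKS system on $\mathbb{T}^2_{y,z}$ perturbed by the small source $\frac{1}{A}\na_Y\!\cdot\!\lan n_\nq\na_Y\cc_\nq\ran$. The subcritical mass condition \eqref{mss_cnstr} places the unperturbed equation in the regime where the classical 2D theory \cite{CalvezCorrias,Mizoguchi13} provides global $H^M$ bounds on $\nz$ and $\na_Y\cz$ depending only on the initial data, the mass, and $M$; these fix the target constants $\mathfrak{B}_{\nz,H^M}$ and $\mathfrak{B}_{\na\cz,H^{M}}$. The perturbative source is integrable in time: using \eqref{HypED_FM} and Sobolev embedding, $\|\lan n_\nq\na_Y\cc_\nq\ran\|_{H^{M-1}}$ decays at rate $A^{-1/3}$, so $\int_0^\infty\|\cdot\|_{H^{M-1}}\,ds=\mathcal O(A^{-2/3})$, which is much smaller than the slack factor in \eqref{Hyp_0_nHM}--\eqref{Hyp0_nac_HM} once $A$ is large.

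The hard part will be closing the weighted estimate for the mixed-interaction term $\tfrac{1}{A}\na\!\cdot\!(\nz\na\cc_\nq)$ in \eqref{ppPKS_n_neq}: it pairs the $\mathcal{O}(1)$ quantity $\nz$ with the chemical gradient $\na\cc_\nq$ whose $F_M$-weight $A^i$ is one half-power shy of the $n_\nq$-weight $A^{i+1/2}$ that it must be measured against. This is precisely the fast chemical-gradient growth flagged in the introduction as the principal obstruction in the parabolic-parabolic setting, and closing the estimate will demand a careful commutator analysis distributing derivatives between $\nz$ (controlled by the zero-mode bootstrap) and $\na\cc_\nq$ (controlled by the functional $F_M$), using the $1/A$ prefactor together with Sobolev embedding at order $M\geq 3$ to recover the missing half-power. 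The peculiar weighting in \eqref{F_M_sect_2} and the smallness hypothesis \eqref{smll_nq} are engineered exactly so that this single term balances.
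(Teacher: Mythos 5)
There are two genuine gaps. The first concerns your mechanism for \eqref{ConED_FM}: you propose to test the remainder equations and extract a damping term $-(4\delta/A^{1/3})F_M$ from the transport term ``via the $A^{-1/3}$-relaxation enhancing property \eqref{ED1/3}.'' This does not work: \eqref{ED1/3} is a statement about the linear propagator $S_{t_\star}^{t_\star+t}$ acting on passive-scalar data, not a coercivity estimate, and in an energy identity the transport term $u\pa_x$ is antisymmetric, contributing nothing beyond commutators $[\pa_y^j,u]\pa_x$; the only explicit dissipation is $-\frac{2}{A}\|\na\cdot\|_2^2$, which gives heat-rate decay only. Indeed $F_M$ can grow transiently (this is why the constants $4e^2$, $2e^2$ appear), so no pointwise-in-time differential inequality of the form $\frac{d}{dt}F_M\leq -\delta A^{-1/3}F_M+\dots$ is available. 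The paper instead works on time blocks of length $\delta^{-1}A^{1/3}$: it proves a regularity estimate $F_M[t_\star+\tau]\leq 2F_M[t_\star]$ \eqref{F_M_Reg_est} by Gr\"onwall (where your commutator and mixed-term considerations, including the half-power mismatch in $\frac1A\na\cdot(\nz\na\cc_\nq)$, do enter, via Lemmas \ref{Lem:comm}--\ref{Lem:n_eq}), and separately a decay estimate \eqref{FM_Decay_est} by comparing $\pal n_\nq,\pal\cc_\nq$ with $S_{t_\star}^{t_\star+\tau}$ applied to their data at $t_\star$, showing the difference functional $\mathbb{D}_M$ stays below $\frac{1}{32}F_M[t_\star]$ and invoking Theorem \ref{Thm:Lnr_ED} only for the propagator piece; iterating the blocks yields \eqref{ConED_FM}. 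Your proposal is missing this comparison-and-iteration structure, which is the essential idea.

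The second gap is in the zero-mode part: you plan to quote the classical 2D theory \cite{CalvezCorrias,Mizoguchi13} for global, uniform-in-time $H^M$ bounds of the unperturbed parabolic-parabolic system on $\Torus^2$ under the $8\pi$ mass constraint, and then add the source $\frac1A\na_Y\cdot\lan n_\nq\na_Y\cc_\nq\ran$ perturbatively. As the paper itself notes, no such uniform-in-time result is available for the parabolic-parabolic system (Calvez--Corrias gives bounds growing exponentially in time, on $\rr^2$), so the constants $\mf B_{\nz,H^M}$, $\mf B_{\na\cz,H^M}$ cannot be ``fixed'' by citation; establishing them is the content of Section \ref{Sec:0-mode}, which runs the free-energy dissipation \eqref{frentm_est}, the chemical-potential minimization lemma comparing with the parabolic-elliptic potential, the logarithmic HLS inequality on $\Torus^2$ to get the $L\log L$ bound under $\Mass_0<8\pi$, then truncated $L^2$ estimates, a Moser-type iteration for $L^\infty$, and an induction on the Sobolev index, with the remainder-forcing terms carried through each step (they are controlled by the bootstrap hypothesis \eqref{HypED_FM} and are $\mathcal O(A^{-2/3})$ after time integration, as you anticipated). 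So while your perturbative intuition for the forcing is correct, the core uniform-in-time estimates you treat as known must actually be proved, and doing so is most of the work.
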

\myc{\footnote{Double check which one gives open condition and which one gives the closed condition. }}We prove Theorem \ref{thm_1} by applying Proposition \ref{Pro:main} and Theorem \ref{thm:lcl_exst}, Theorem \ref{thm:blwp_crt}. First note that by the local existence (Theorem \ref{thm:lcl_exst}), the interval $[0,T_\star)$ is nonempty. Now by improving the bootstrap hypotheses \eqref{Hypotheses} to the conclusions \eqref{Conclusions} (Proposition \ref{Pro:main}) and recalling the continuity in time of the Lebesgue and Sobolev norms involved, we show that the estimates \eqref{Conclusions} persist on any finite time interval in $[0,\infty)$. As a result, we derive uniform-in-time $L^\infty$-bound on the solutions $n$. By Theorem \ref{thm:blwp_crt}, we have that the higher $H^s$ Sobolev regularities propagate. Therefore, the solutions are globally regular. This concludes the proof of Theorem \ref{thm_1}.

\subsection{Sketch of the Estimates}
In this subsection, we discuss the main ingredients involved in the proof of the Proposition \ref{Pro:main}. 

\noindent \textbf{A. Enhanced Dissipation Estimates.} The first challenge stems from deriving the nonlinear enhanced dissipation  \eqref{ConED_FM}. In Section \ref{Sec:ED}, we invoke the linear enhanced dissipation assumption  \eqref{ED1/3} to derive the nonlinear enhanced dissipation estimate  \eqref{ConED_FM}. Here we adapt the idea from previous literature, \cite{KiselevXu15, ElgindiCotiZelatiDelgadino18,IyerXuZlatos, HeTadmorZlatos}. The main intuition is that in order to obtain the enhanced dissipation, it is enough to consider the dynamics on a relatively short time interval of length $\mathcal{O}(A^{1/3})$. On this short time interval (compared to the classical heat decay time scale $\mathcal{O}(A)$), the nonlinear effect is perturbative. Hence, it is possible to derive enhanced dissipation by comparing the nonlinear solutions to associated passive scalar solutions. However, even though this idea works well in the parabolic-elliptic PKS equations, it faces challenges in the parabolic-parabolic PKS system \eqref{ppPKS} due to the shear-flow induced nonlinear instabilities. The following diagram describes how strong shear flows induce instability:
\begin{align}
\text{Strong shear } \Rightarrow &\text{ Fast growth of the chemical gradient }\na \cc_{\neq}\\
\Rightarrow &\text{ Fast growth of the nonlinear aggregation }\frac{1}{A}\na \cdot( n\na \cc).
\end{align}
As a result, we observe that the aggregation undergoes fast growth in the small time scale $\mathcal{O}(A^{1/3})$, and this nonlinearity might not be perturbative if no extra control is introduced. 

To overcome this difficulty, we invoke the smallness  \eqref{smll_nq} and design a  functional ${F}_M$ \eqref{F_M_sect_2}, which is bounded independent of the parameter $A$. The $F_M$ records the transient growth of various components of the solutions. By carefully analyzing the time evolution of this functional ${F}_M$, we end up with the proof of \eqref{ConED_FM}. 

\noindent
\textbf{B. The $x$-average Estimates.} Another difficulty comes from estimating the $x$-average quantities $\nz , \,\, \na \cz$. In the previous works on suppression of blow-up through shear flows,  \cite{BedrossianHe16} and \cite{He}, the $x$-average of the solution must be uniformly bounded in time. If this is not guaranteed, then the growing $x$-average part will force the remainder $n_{\neq}, \,\, \na \cc_{\neq}$ to deviate away from the linear passive scalar dynamics and break the argument in the long run. 

For the 3-dimensional PKS equations, the $x$-average part solves the modified 2-dimensional parabolic-parabolic PKS equations. However, to our knowledge, there are no previous results concerning the uniform-in-time boundedness of the solutions on the plane $\rr^2$. The closest result to our goal is the exponential-in-time bound derived by V. Calvez and L. Corrias in  \cite{CalvezCorrias}.  
  
To overcome this difficulty,  we confine our system to the bounded domain $  \mathbb{T}^2$, where it is possible to derive the uniform-in-time estimate on the quantities $\nz , \,\,\na \cz $. Here we use an argument similar to the one in \cite{CalvezCorrias}. The main idea is to compare the parabolic-parabolic PKS solutions to the parabolic-elliptic PKS solutions. Once this is completed, we can apply the techniques from the paper \cite{BedrossianHe16} to derive the bound. 

The remaining part of the paper is organized as follows: in Section \ref{Sec:0-mode}, we estimate the $x$-average quantities $\nz,\ \cz$, \eqref{Con0_n_HM}, \eqref{Con0nac_HM+1}; in Section \ref{Sec:ED}, we prove the enhanced dissipation of the remainder $n_{\neq},\, \na \cc_{\neq}$ \eqref{ConED_FM} and hence concludes the proof of Proposition \ref{Pro:main}.  

\subsection{Notation} In this subsection, we introduce various notations and abbreviations that we use in the sequel. Throughout the paper, the constants $B, C$ change from line to line. However, the constants $C_{(\cdot)}$, e.g., $ C_{\lan n\ran; H^M}$ will be defined and fixed unless otherwise stated. The bootstrap constants will be denoted by $\mathfrak B_{(\cdots)}$.  Since the Sobolev norm of the shear flow $u(t,y)$ is assumed to be uniformly bounded in time, i.e., $\|u\|_{L_t^\infty W^{M+3,\infty}_y}\leq C(M)$, we drop the explicit dependence of the constant $C$ on the Sobolev norm of $u$ and the regularity index $M$. 

The spatial average of a function $f$ on $\Torus^3$ is denoted by $\dss\overline{f}:=\frac{1}{|\Torus|^3}\int_{\Torus^3}f(x,y,z)dxdydz. $

Recall the classical $L^p$ norms and Sobolev $H^s$ norms:
\begin{align}
\|f\|_{L_x^p}=&\|f\|_p=\left(\int |f|^p dx\right)^{1/p};\quad \|f\|_{L_t^q([0,T]; L^p_x)}=\left(\int_0^T\|f(t,x)\|_{L_x^p}^qdt\right)^{1/q};\\
\|f\|_{H^M}=&\left(\sum_{i+j+k\leq M}\|\pa_x^i\pa_y^j\pa_z^k f\|_{L^2}^2\right)^{1/2};\quad
\|f\|_{\dot H^M}=\left(\sum_{i+j+k=M}\|\pa_x^i\pa_y^j\pa_z^k f\|_{L^2}^2\right)^{1/2}.
\end{align}
We use the notation $(i_1,j_1,k_1)\leq (i_2, j_2,k_2)$ to denote the multi-index relation $i_1\leq i_2,\, j_1\leq j_2$ and $k_1\leq k_2. $
Moreover, we use the notation $|i,j,k|:=i+j+k. $

Throughout the paper, we use the notation $S_{t_\star}^{t_\star+t}$ to denote the semigroup corresponding to the following passive scalar equation initiating from time $t_\star$.
\begin{align}\pa_t \rho+u(t,y )\pa_x \rho ={A}^{-1} \de \rho.
\end{align}
Adopting this notation, $S_{t_\star}^{t_\star+t} \rho_{\neq}$ is the solution to the passive scalar subject to the initial data $\rho_{\neq}(t_\star)$ at time $t_\star$. 

We reuse the notation $T_i$ to denote various terms in the proof to avoid cumbersome notations—namely, the meaning of each $T_i$ term changes from lemma to lemma.


\section{The $x$-average Estimates} \label{Sec:0-mode}
In this section, we consider the $x$-average $(\nz ,\, \cz )$'s evolution \eqref{ppPKS_0md} and prove the conclusions \eqref{Con0_n_HM},  \eqref{Con0nac_HM+1}. 

By the divergence structure of the cell density equation \eqref{ppPKS_n}, we obtain that the total mass $\Mass=\|n\|_{L^1}$ is preserved. Standard integration by parts yields that the average chemical concentration is also preserved. To conclude, we have the following conservations:
\begin{align}
\|\nz (t)\|_{L^1(\Torus^2)}=&\|\lan n_{\mathrm{in}}\ran\|_{L^1}=\frac{\Mass}{2\pi}=:\Mass_0;\label{Cllmss_cnsrv}\\
\int_{\mathbb{T}^2}\cz (t,\by)d\by=&\int_{\mathbb{T}^2} \lan \cc_{\mathrm{in}}\ran(\by)d\by=0,\quad \forall t\in[0,T_\star].\label{Chem_cnsrv}
\end{align}  
Recall that the interval $[0,T_\star]$ is the  bootstrap  time horizon.  
We study the following free energy of the $x$-average 
\begin{equation}\label{Free_energy}
E[\nz ,\cz ]:=\int_{\Torus^2} \nz \log \nz dY+\frac{1}{2}\int_{\Torus^2}|\na \cz |^2d\by-\int_{\Torus^2} \cz (\nz -\overline{n})d\by.
\end{equation}
We further define the entropy and the  chemical potential:
\begin{equation}S[\nz]:=\int_{\Torus^2}\nz\log \nz dY,\qquad
P[\nz ,\cz ]:=\frac{1}{2}\int_{\Torus^2} |\na \cz |^2d\by -\int_{\Torus^2} \cz (\nz -\overline{n})d\by.\label{ChemPot}
\end{equation}
Note that 
\begin{equation}
E[\nz ,\cz ]=S[\nz]+P[\nz ,\cz ].
\end{equation}
The free energy \eqref{Free_energy} is bounded and this is the content of the following lemma. 
\begin{lem}
Consider the solution to the $x$-averaged equation \eqref{ppPKS_0md}. The time evolution of the free energy has the form
\begin{align}\label{PKSfrentmev}
\frac{d}{dt}E[\nz ,\cz ]=&-\frac{1}{A}\int \nz \bigg|\na_Y   \log \nz  -\na_Y   \cz \bigg|^2dY-{A}\int|\pa_t\cz |^2dY+\frac{1}{A}\int \na_Y   \log \nz \cdot \lan n_{\neq}\na_Y   \cc_{\neq}\ran dY\\
&-\frac{1}{A}\int \na_Y   \cz \cdot\lan n_{\neq}\na_Y   \cc_{\neq}\ran dY .
\end{align}
Moreover, the following estimate holds
\begin{align}
A\int_0^t\|\pa_t \cz (s)\|_{L_Y^2}^2ds+E[\nz (t),\cz (t)]\leq &E[\lan n_{\mathrm{in}}\ran, \lan\cc_{\mathrm{in}}\ran]+\frac{\myr{1}}{\delta  A^{1/3}} C(\mf B_{2,\infty}),\quad \forall t\in[0,T_\star].\label{frentm_est}
\end{align}
Here $\CC$ are defined in \eqref{CC}.
\end{lem}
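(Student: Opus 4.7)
My plan is to derive the identity \eqref{PKSfrentmev} by a term-by-term time-differentiation of the free energy $E[\nz,\cz]$, then to control the two remainder contributions $T_1,T_2$ and integrate over $[0,t]$.

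\emph{Step 1 (derive the identity).} By mass conservation \eqref{Cllmss_cnsrv}, $\frac{d}{dt}\int \nz\log \nz\, d\by = \int \log \nz\,\pa_t \nz\, d\by$. Substituting \eqref{ppPKS_n} and integrating by parts yields the Fisher-type term $-\frac{1}{A}\int |\na_Y \nz|^2/\nz\, d\by$, a cross term $\frac{1}{A}\int \na_Y \nz\cdot \na_Y \cz\, d\by$, and $T_1:=\frac{1}{A}\int \na_Y\log \nz\cdot \lan n_\nq\na_Y \cc_\nq\ran\, d\by$. For $\frac{1}{2}\frac{d}{dt}\|\na_Y \cz\|_{L^2}^2 = -\int \de \cz\,\pa_t \cz\, d\by$, inserting \eqref{ppPKS_c} produces $-A\|\pa_t \cz\|_{L^2}^2 + \int (\nz - \overline n)\pa_t \cz\, d\by$. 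The coupling $-\frac{d}{dt}\int \cz(\nz - \overline n)\, d\by$ is handled analogously and the symmetric $\pa_t \cz$ contributions cancel. The remaining cross terms assemble through the algebraic identity
\begin{align}
-\int \frac{|\na_Y \nz|^2}{\nz}\, d\by + 2\int \na_Y \nz\cdot \na_Y \cz\, d\by - \int \nz\,|\na_Y \cz|^2\, d\by = -\int \nz\, |\na_Y \log \nz - \na_Y \cz|^2\, d\by
\end{align}
into the Keller-Segel dissipation, and the leftover piece from the coupling contributes $T_2:=-\frac{1}{A}\int \na_Y \cz\cdot \lan n_\nq \na_Y \cc_\nq\ran\, d\by$, establishing \eqref{PKSfrentmev}.

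\emph{Step 2 (perturbation bound without lower bound on $\nz$).} Combining the two remainder pieces gives $T_1+T_2 = \frac{1}{A}\int (\na_Y \log \nz - \na_Y \cz)\cdot \lan n_\nq\na_Y \cc_\nq\ran\, d\by$. The key observation is that, because $\lan \na_Y \cc_\nq\ran \equiv 0$, one has $\lan n_\nq\na_Y \cc_\nq\ran = \lan n\,\na_Y \cc_\nq\ran$; Cauchy-Schwarz in $x$ combined with $n\geq 0$ yields the pointwise inequality
\begin{align}
|\lan n_\nq\na_Y \cc_\nq\ran(\by)|^2 \leq \nz(\by)\,\lan n\, |\na_Y \cc_\nq|^2\ran(\by),
\end{align}
which sidesteps the singularity of $1/\nz$. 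Young's inequality then gives
\begin{align}
|T_1+T_2| \leq \frac{1}{2A}\int \nz\, |\na_Y \log \nz - \na_Y \cz|^2\, d\by + \frac{1}{2A|\Torus|}\int_{\Torus^3} n\,|\na_Y \cc_\nq|^2\, dx\, d\by.
\end{align}
The first term is absorbed into the Keller-Segel dissipation in \eqref{PKSfrentmev}, while the residual is bounded using the $L^\infty$ bootstrap \eqref{Bound_Linf} and the definition \eqref{F_M_sect_2} by $\frac{C\mf B_\infty}{A}\|\na_Y \cc_\nq\|_{L^2(\Torus^3)}^2 \leq \frac{C\mf B_\infty}{A}F_M(t)$.

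\emph{Step 3 (time integration).} Integrating from $0$ to $t$, the $-A\|\pa_t \cz\|_{L^2}^2$ piece delivers $A\int_0^t \|\pa_t \cz\|_{L^2}^2\, ds$ to the left-hand side of \eqref{frentm_est}. On the right, the enhanced dissipation hypothesis \eqref{HypED_FM} gives $\int_0^t F_M(s)\, ds \leq C F_M(0)\,A^{1/3}/\delta$, so
\begin{align}
\int_0^t |T_1+T_2|\, ds \leq \frac{C(\CC)}{\delta A^{2/3}} \leq \frac{C(\CC)}{\delta A^{1/3}},
\end{align}
which yields \eqref{frentm_est}. The main obstacle is the Cauchy-Schwarz identity in Step 2 that exploits $\lan n_\nq\na_Y \cc_\nq\ran = \lan n\,\na_Y \cc_\nq\ran$ to avoid the lack of a pointwise lower bound on $\nz$; once this is noticed, the remainder of the argument is a routine combination of Young's inequality, the bootstrap $L^\infty$ bound, and the enhanced dissipation of $F_M$.
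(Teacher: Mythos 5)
Your derivation of the free-energy identity \eqref{PKSfrentmev} in Step 1 is correct and matches the paper's computation (testing the $\cz$-equation with $\pa_t\cz$ and cancelling the symmetric $\int(\nz-\overline n)\pa_t\cz$ contributions). Where you genuinely depart from the paper is the estimate of the remainder $T_1+T_2$. The paper estimates $T_1$ and $T_2$ separately: for $T_1$ it establishes the pointwise bound $\|n_\nq(\cdot,Y)\|_{L^1_x}/\nz(Y)\leq 2|\Torus|$ (which requires observing $\nz>0$ from the positivity of $n$ and the parabolic comparison principle) and then applies a one-dimensional Gagliardo--Nirenberg inequality; $T_2$ is handled by H\"older plus the $L^\infty$ bound on $\na\cz$. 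You instead combine $T_1+T_2=\frac{1}{A}\int(\na_Y\log\nz-\na_Y\cz)\cdot\lan n_\nq\na_Y\cc_\nq\ran\,d\by$, exploit $\lan\na_Y\cc_\nq\ran\equiv0$ to rewrite $\lan n_\nq\na_Y\cc_\nq\ran=\lan n\,\na_Y\cc_\nq\ran$, and use Cauchy--Schwarz in $x$ with $n=\sqrt{n}\sqrt{n}\geq 0$ to get the pointwise bound $|\lan n_\nq\na_Y\cc_\nq\ran|^2\leq\nz\,\lan n|\na_Y\cc_\nq|^2\ran$, after which Young's inequality absorbs half the contribution into the Keller--Segel dissipation. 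Both routes reach the same final inequality \eqref{frentm_est}, but your argument avoids the $1/\nz$ singularity entirely and does not require establishing strict positivity of $\nz$; it is arguably more robust (and, as a by-product, bounds $T_1$ and $T_2$ simultaneously rather than via two unrelated mechanisms). The price is that the dissipation coefficient becomes $\frac{1}{2A}$ rather than $\frac{1}{A}$, which is immaterial here since the Keller--Segel dissipation is dropped from \eqref{frentm_est} anyway.
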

\begin{proof}
First we calculate the time evolution of $E$ using the integration by partsas follows 
\begin{align}\label{E_dssp_pf_1}
\frac{d}{dt}E=&\int \pa_t \nz \bigg(\log \nz -\cz \bigg)dY-\int (\nz -\overline{n})\pa_t\cz dY+\frac{d}{dt}\left(\frac{1}{2}\int|\na \cz |^2dY\right) \\
=&-\frac{1}{A}\int \na_Y  \cdot(\nz \na_Y  \log \nz -\nz \na_Y   \cz )\bigg(\log \nz -\cz \bigg)dY-\int (\nz -\overline{n})\pa_t\cz dY\\
&+\frac{d}{dt}\left(\frac{1}{2}\int|\na_Y   \cz |^2dY\right) -\frac{1}{A}\int\na_Y   \cdot \lan n_{\neq}\na_Y   \cc_{\neq}\ran(\log \nz -\cz )dY\\
=&-\frac{1}{A}\int \nz |\na_Y  \log \nz -\na_Y   \cz |^2dY-\int (\nz -\overline{n})\pa_t\cz dY+\frac{1}{2}\frac{d}{dt}\left(\int|\na_Y   \cz |^2dY\right)\\
&+\frac{1}{A}\int \lan n_{\neq}\na_Y   \cc_{\neq}\ran\cdot(\na_Y   \log \nz -\na_Y   \cz )dY.
\end{align}
Next we test the equation \eqref{ppPKS_c} with $\pa_t \cz $, and get 
\begin{equation}
\int | \pa_t\cz |^2dY=-\frac{1}{2A}\frac{d}{dt}\int |\na_Y   \cz |^2dY+\frac{1}{A}\int \pa_t\cz  (\nz -\overline{n})dY.
\end{equation}
Now applying this equation in \eqref{E_dssp_pf_1} yields that
\begin{align}\label{ddtE=ZT}
\frac{d}{dt}E=&-\frac{1}{A}\int \nz |\na_Y  \log \nz -\na_Y   \cz |^2dY-A\int |\pa_t \cz |^2dY\\
&+\frac{1}{A}\int \lan n_{\neq}\na_Y   \cc_{\neq} \ran\cdot \na_Y   \log \nz dY-\frac{1}{A}\int \lan n_{\neq}\na_Y   \cc_{\neq} \ran\cdot \na_Y   \cz dY\\
=:&-\frac{1}{A}\int \nz |\na_Y  \log \nz -\na_Y   \cz |^2dY-A\int |\pa_t \cz |^2dY+T_1+T_2.
\end{align}
To finish the proof of the lemma, we estimate the last two terms in the equation \eqref{ddtE=ZT}. The last term $NZ_2$ in \eqref{ddtE=ZT} is estimated through the H\"older inequality and  Hypotheses \eqref{HypED_FM}, \eqref{Hyp0_nac_HM} as follows
\begin{align}\label{NZ_2}
T_2\leq& \frac{1}{A}\|\na _Y \cc_{\neq}\|_2\|n_{\neq}\|_2\|\na_Y   \cz \|_\infty\leq \frac{1}{A} C F_M[n_{\text{in};\nq},\cc_{\text{in},H^M}]  \mf B_{\na  \cz ,H^M}e^{-\frac{2\delta}{A^{1/3}}t}.
\end{align} For the term $T_1$ in \eqref{ddtE=ZT}, we first prove the following relation, which is a natural consequence of the fact that $n(x,Y)=\nz (Y)+n_{\neq}(x,Y)\geq 0$,
\begin{align}\label{E_dssp_pf_2} 
\frac{\|n_{\neq}(x,Y)\|_{L_x^1}}{\nz (Y)}\leq 2|\Torus|,\quad \forall Y\in\Torus^2.
\end{align}
The proof  is as follows. Since $n(x,Y)\geq 0$ for all $(x,Y)\in\Torus^3$, we have that $n_{\neq}^-(x,Y)\leq \nz (Y),\quad \forall x\in\mathbb{T}$. Now the average zero condition $\int_{\mathbb{T}}n_{\neq}(x,Y)dx=0$ yields that $\|n_{\neq}(\cdot,Y)\|_{L^1_x}=2\|n_{\neq}^-(\cdot,Y)\|_{L_x^1}\leq 2\nz (Y)|\mathbb{T}|$. Moreover, since the initial density is assumed to be strictly positive, i.e., $n_{\text{in}}>0$, parabolic comparison argument yields that $n(t,x,Y)>0$ for arbitrary finite $t<\infty.$ Hence, the $x$-average $\nz(t,Y)$ is nonzero for $\forall t\in[0,\infty)$, $\forall Y\in \Torus^2$. As a result, we have the estimate \eqref{E_dssp_pf_2}. Now 
we estimate the $T_1$ term in \eqref{ddtE=ZT} with Gagliardo-Nirenberg inequality in $1$-dimension as follows:
\begin{align}
\frac{1}{A}\int \frac{\na_Y   \nz }{\nz }\cdot\lan n_{\neq}\na_Y   \cc_{\neq}\ran dY=&\frac{1}{A|\Torus|}
\int  {\na_Y   \nz (Y)}{}\cdot\frac{\int_{\mathbb{T}}n_{\neq}(x,Y)\na_Y   \cc_{\neq}(x,Y)dx}{\nz (Y)}dY\\
\leq &\frac{1}{A|\Torus|}\int|\na_Y   \nz (Y)|\|\na_Y   \cc_{\neq}(\cdot,Y)\|_{L_x^\infty}\frac{\|n_{\neq}(\cdot,Y)\|_{L_x^1}}{\nz (Y)}dY\\
\leq&\frac{2}{A}C\int |\na_Y   \nz (Y)|\|\na_Y   \cc_{\neq}(\cdot,Y)\|_{L_x^2}^{1/2}\|\pa_x \na_Y   \cc_{\neq}(\cdot,Y)\|_{L_x^2}^{1/2}dY.
\end{align}
Now applying Young's inequality, H\"older inequality and   Hypotheses \eqref{HypED_FM} and \eqref{Hyp_0_nHM} yields the following result
\begin{align}
T_1\leq&\frac{C}{A} \|\na_Y   \nz \|_2(\|\na_Y   \cc_{\neq}\|_{L_{x,Y}^2}+\|\pa_x\na_Y   \cc_{\neq}\|_{L_{x,Y}^2}) 
\leq  \frac{C}{A}\CC^3 e^{-\frac{\delta t}{A^{1/3}}}.\label{NZ_1}
\end{align}Combining \eqref{ddtE=ZT}, \eqref{NZ_2},   and \eqref{NZ_1} yields
\begin{align} 
\frac{d}{dt}E[\nz ,\cz ]\leq&-\frac{1}{A}\int \nz \lf|\na_Y   \log \nz  -\na_Y   \cz \rg|^2dY-{A}\int|\pa_t\cz |^2dY +\frac{1}{A}C(\CC)e^{-\frac{\delta t}{A^{1/3}  }}.
\end{align}
Now integrating in time yields the estimate \eqref{frentm_est}.%

\end{proof} 

Regarding the chemical potential $F[\nz ,\cz ]$ \eqref{ChemPot}, the following chemical potential minimization lemma is central in the analysis. The idea behind the lemma is that we can replace the parabolic-parabolic chemical potential by the parabolic-elliptic potential.

\begin{lem}
Consider a function $f\in L^2(\Torus^2)$ \myc{(Check?)} and the solution $e_f$ to the following equation on $\Torus^2$:
\begin{align}
-\de e_f=f-\overline{f},\quad \int_{\Torus^2} e_{f}dY=0.
\end{align}
Then the solution has the following integral representation:
\begin{align}\label{defn_G_T2}
e_f(\by)=(-\de_{\Torus^2})^{-1}(f-\overline{f})(\by)=:\int {G}_{\Torus^2}(Y,Z)(f(Z)-\overline{f})dZ,
\end{align}
where $G_{\Torus^2}$ denotes the Green's function of the negative Laplacian on $ \mathbb{T}^2$. Assume that $e$ is an arbitrary $H^1$-function on the torus \myc{(Check?)}. 
Then the following inequality holds: 
\begin{align}
P[f;e]-P[f;e_f]=\frac{1}{2}\int_{\Torus^2} |\na_Y   (e-e_f)|^2dY\geq0,\label{ChemEngMin}
\end{align}  
\end{lem}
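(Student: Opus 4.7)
The plan is to recognize this identity as the standard variational principle expressing that $e_f$ is the minimizer of the quadratic functional $e \mapsto P[f;e]$; the Euler--Lagrange equation for this minimization is precisely $-\Delta e = f - \overline{f}$. The proof is essentially just an expansion plus one integration by parts, so I do not expect a serious obstacle.

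Step 1. First I would confirm existence and uniqueness of $e_f$ in the natural class. Since $f - \overline{f} \in L^2(\Torus^2)$ has zero mean, the Poisson equation $-\Delta e_f = f - \overline{f}$ admits a unique $H^2(\Torus^2)$ solution with zero mean, which is represented via the stated Green's function $G_{\Torus^2}$.

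Step 2. The key computational ingredient is the identity
\begin{equation}
\int_{\Torus^2} \nabla_Y e \cdot \nabla_Y e_f \, dY \;=\; \int_{\Torus^2} e\,(f - \overline{f})\, dY,
\end{equation}
which follows from integration by parts (no boundary terms on $\Torus^2$) together with $-\Delta e_f = f - \overline{f}$. Specializing this identity to the case $e = e_f$ gives $\|\nabla_Y e_f\|_{L^2}^2 = \int e_f (f - \overline{f})\,dY$.

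Step 3. Now I would simply expand the right-hand side of \eqref{ChemEngMin}:
\begin{align}
\frac{1}{2}\int_{\Torus^2} |\nabla_Y (e - e_f)|^2 \, dY
= \frac{1}{2}\int |\nabla_Y e|^2 \, dY - \int \nabla_Y e \cdot \nabla_Y e_f \, dY + \frac{1}{2}\int |\nabla_Y e_f|^2 \, dY.
\end{align}
Substituting the two identities from Step 2 converts this into
\begin{align}
\frac{1}{2}\int |\nabla_Y e|^2 \, dY - \int e\,(f - \overline{f})\, dY \;-\; \Bigl( \frac{1}{2}\int |\nabla_Y e_f|^2 \, dY - \int e_f\,(f - \overline{f}) \, dY \Bigr),
\end{align}
which is exactly $P[f;e] - P[f;e_f]$ by the definition \eqref{ChemPot}. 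The non-negativity is then immediate from the quadratic form on the left.

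The only place any care is needed is justifying the integration by parts; this is routine on the flat torus given $e \in H^1(\Torus^2)$ and $e_f \in H^2(\Torus^2)$, so the argument is complete.
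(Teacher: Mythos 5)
Your proposal is correct and follows essentially the same route as the paper: the same integration-by-parts identity $\int \nabla_Y e\cdot\nabla_Y e_f\,dY=\int e(f-\overline f)\,dY$, its specialization to $e=e_f$, and the expansion of the quadratic form to identify it with $P[f;e]-P[f;e_f]$. The only cosmetic difference is that you expand $\tfrac12\int|\nabla_Y(e-e_f)|^2$ and recognize the result as $P[f;e]-P[f;e_f]$, while the paper runs the identical computation in the opposite direction.
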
 
\ifx
\begin{rmk}
 I think there is an easier way to derive the $\widetilde{B}(y,z)$. First, by the Neumann boundary condition, we see that we can extend the $n$ and $c$ evenly with respect to $\pm 1$. Now due to the even symmetry is preserved in the dynamics, we have that the $\pa_{y }$ is always zero at the two boundaries. Now we can further extend the $n,c$ to the external of the interval periodically with period $4$. Now the $\widetilde{B}$ is the $B$ on the torus of size $4\mathbb{T}\times \mathbb{T}$. Now we could see that $\widetilde{B}(y,z)$ is symmetric in $y,z$.  \end{rmk}
\fi
\begin{rmk}
The function $f$ will be chosen as a multiple of the cell density $n$. 
\end{rmk}
\begin{proof}
\myc{\footnote{Check whether we need this previous notes (?): The technical detail here is that one needs to prove the $L^2$ boundedness of $\na e_f$. The explicit justification comes from the bootstrap hypothesis/argument \eqref{Hyp_zero_mode_n_H1}? The formal proof is as follows:}}
Since $-\de{e_f}=f-\overline{f}$, we obtain the following equality through integration by parts,
\begin{align}\label{nacnabarc}
\int_{\Dy}\na_Y   e\cdot \na_Y   e_fdY=&-\int_{\Dy} e\de_Y  e_f dY=\int_{\Dy} e(f-\overline{f})dY.
\end{align}
Direct application of the above relation with $e=e_f$ yields that 
\begin{align}\label{P_relation}
P[f;e_f]=&\frac{1}{2}\int |\na_Y   e_f|^2dY-\int  e_f (f-\overline{f})dY
=-\frac{1}{2}\int|\na_Y   e_f|^2dY.
\end{align}
We calculate with the equalities \eqref{nacnabarc} and \eqref{P_relation} to obtain that
\begin{align}
P[f;e]-P[f;e_f]=&\frac{1}{2}\int|\na_Y   e|^2dY-\int  e (f- \overline{f})dY+\frac{1}{2}\int|\na_Y   e_f|^2dY\\
=&\frac{1}{2}\int|\na_Y   e|^2dY-\int \na_Y   e\cdot \na_Y   e_f dY+\frac{1}{2}\int|\na_Y   e_f|^2dY\\
=&\frac{1}{2}\int |\na_Y   e -\na_Y   e_f|^2dY\geq 0.
\end{align}
\ifx
\begin{align*}
\frac{1}{2}&\int|\na e-\na e_f|^2dY+\frac{1}{2}\int(e-e_f)^2dY\\=&\frac{1}{2}\int|\na e|^2dY-\int \na e\cdot \na e_fdY+\frac{1}{2}\int|\na e_f{}|^2dY
+\frac{1}{2}\int e^2dY-\int e e_{f}dY+\frac{1}{2}\int e_f^2dY\\
=&\frac{1}{2}\int|\na e|^2dY-\int  e (f- e_f)dY+\frac{1}{2}\int|\na e_f|^2dY+\frac{1}{2}\int e^2dY-\int e e_fdY+\frac{1}{2}\int e_f^2dY\\
=&F[f;e]+\frac{1}{2}\int |\na e_f|^2+\frac{1}{2}\int e_{f}^2\\
=&F[f;e]-F[f;e_f].
\end{align*}
\fi
This finishes the proof.
\end{proof}

Before proving the entropy bound on the solutions, we observe the following logarithmic Hardy-Littlewood-Sobolev inequality:

\begin{lem}\label{lem:log-HLS}
For all non-negative functions $f\in L_+^1(\mathbb{T}^2)$ such that  $\int f(Y)\log f(Y)dY$ is finite, the following inequality holds true:
\begin{align}\label{log-HLS}
\iint_{\Torus^2\times \Torus^2}(f(Y)-\overline{f})G_{\Torus^2}(Y,Z)(f(Z)-\overline{f})dYdZ\leq \frac{ {\|f\|_1}}{4\pi}\int_{\Torus^2} f(Y) \log f(Y)dY+C(\|f\|_1). 
\end{align}
Here $G_{\Torus^2}$ is the Green's function of the negative Laplacian on $ \mathbb{T}^2$ \eqref{defn_G_T2}.
\end{lem}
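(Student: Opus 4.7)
The plan is to reduce the claimed inequality to the sharp logarithmic Hardy--Littlewood--Sobolev inequality on $\R^2$ (Beckner, Carlen--Loss) via the logarithmic asymptotics of the Green's function $G_{\Torus^2}$. A preliminary simplification is that the normalization $\int_{\Torus^2} G_{\Torus^2}(Y,Z)\,dY=0$ (immediate from $\int e_f=0$ and the symmetry of $G_{\Torus^2}$) annihilates every cross term containing $\bar f$ when one expands $(f-\bar f)(Y)(f-\bar f)(Z)$, leaving
\begin{equation*}
\iint_{\Torus^2\times\Torus^2}(f-\bar f)(Y)\, G_{\Torus^2}(Y,Z)\,(f-\bar f)(Z)\, dYdZ \;=\; \iint_{\Torus^2\times\Torus^2} f(Y)\, G_{\Torus^2}(Y,Z)\, f(Z)\, dYdZ.
\end{equation*}
So I only need to control the un-centered bilinear form.

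Next, I invoke the standard parametrix decomposition $G_{\Torus^2}(Y,Z) = \frac{1}{2\pi}\log\frac{1}{d_{\Torus^2}(Y,Z)} + R(Y,Z)$, where $d_{\Torus^2}$ is the geodesic distance on $\Torus^2$ and $R\in L^\infty(\Torus^2\times\Torus^2)$; this can be verified directly by Poisson summation of the Euclidean heat kernel. The bounded remainder contributes at most $\|R\|_\infty\|f\|_1^2 \leq C(\|f\|_1)$. For the singular piece I extend $f$ to $\R^2$ by $\tilde f := f\,\mathbf 1_{[-\pi,\pi]^2}$, so that $\|\tilde f\|_{L^1(\R^2)}=\|f\|_1$ and $\int \tilde f\log\tilde f = \int f\log f$. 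Splitting the integral at $\{d_{\Torus^2}\leq 1/2\}$, the far-diagonal part is immediately controlled by $C(\|f\|_1)$ because the log kernel is uniformly bounded there. On the near-diagonal set the torus geodesic distance is realized by the Euclidean distance of a nearest lattice lift, so after a finite covering the Euclidean log-HLS inequality
\begin{equation*}
\iint_{\R^2\times\R^2}\tilde f(x)\tilde f(y)\log\frac{1}{|x-y|}\, dxdy \;\leq\; \frac{\|\tilde f\|_1}{2}\int \tilde f\log\tilde f + C(\|\tilde f\|_1)
\end{equation*}
applies; multiplying by the Green's function prefactor $1/(2\pi)$ produces precisely the coefficient $\|f\|_1/(4\pi)$ appearing on the right-hand side of \eqref{log-HLS}.

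The principal technical obstacle is the geometric mismatch in this near-diagonal reduction: although $d_{\Torus^2}(Y,Z)\leq |Y-Z|_{\R^2}$ for in-cell representatives, the two distances disagree catastrophically near the boundary of the fundamental domain (take $Y=(\pi-\varepsilon,0)$ and $Z=(-\pi+\varepsilon,0)$). I will handle this by a covering argument: each pair of torus points attains its geodesic distance through one of the nine $2\pi\Z^2$-shifts in $\{-1,0,1\}^2$, and on each such piece $\log\frac{1}{d_{\Torus^2}}$ equals the logarithm of a bona fide Euclidean distance of a translate of $\tilde f$, to which the $\R^2$ log-HLS applies, the nine contributions recombining to the torus integral up to bounded corrections. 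Finally, Jensen's inequality provides the lower bound $\int f\log f \geq \|f\|_1\log(\|f\|_1/(2\pi)^2)$, so the right-hand side of \eqref{log-HLS} is always finite under the standing hypothesis $f\log f\in L^1$, and no additional moment assumptions are required.
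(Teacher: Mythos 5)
Your parametrix step is sound and coincides with the paper's own reduction (Lemma \ref{lemFreEn_T2}, the comparison $G_{\Torus^2}=\frac{1}{2\pi}\log\frac{1}{d}+O(1)$), and your reduction of the centered to the uncentered bilinear form is harmless: even if $\int_Y G_{\Torus^2}(Y,Z)\,dY$ is a nonzero constant rather than zero, the discrepancy depends only on $\|f\|_1$ and is absorbed into $C(\|f\|_1)$. Where you diverge from the paper is in the treatment of the resulting logarithmic kernel: the paper invokes the logarithmic HLS inequality on a compact closed $2$-manifold (\cite{SW}) as a black box, whereas you propose to descend to the Euclidean log-HLS by unfolding the torus.

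The gap is in the recombination of the nine shifted pieces. On the near-diagonal set one has $\log\frac{1}{d(Y,Z)}=\log\frac{1}{|Y-Z+2\pi k^*|}$ for a single shift $k^*=k^*(Y,Z)$, so the torus integral splits into the in-cell term $\iint\tilde f(Y)\tilde f(Z)\log\frac{1}{|Y-Z|}\,dY\,dZ$ (to which the Euclidean log-HLS applies with the correct constant $\frac{\|f\|_1}{2}$) \emph{plus} eight wrap-around terms of the form $\iint\tilde f(Y)\,\tilde f(Z+2\pi k)\,\bigl(\log\frac{1}{|Y-Z|}\bigr)\mathbf{1}_{|Y-Z|\le 1/2}\,dY\,dZ$ with $k\neq 0$. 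These are not ``bounded corrections'': they are bilinear interactions between $\tilde f$ and a translate of $\tilde f$ localized near the boundary of the fundamental domain, and if $f$ concentrates there they are exactly as singular as the in-cell term. Estimating them by applying log-HLS to $\tilde f+\tilde f(\cdot+2\pi k)$, or by $L\log L$--$\exp L$ duality, produces an \emph{additional} multiple of $\int f\log f$ with coefficient of order $\|f\|_1$, which pushes the constant in \eqref{log-HLS} strictly above $\frac{\|f\|_1}{4\pi}$ and destroys precisely the sharpness needed for the $8\pi$ threshold in \eqref{mss_cnstr0}. To salvage the unfolding route you would need an extra ingredient — for instance a pigeonhole translation of the fundamental domain ensuring that only an $O(\rho)$ fraction of the mass lies within distance $\rho$ of its boundary, so that the wrap-around entropy contribution carries a small coefficient — or you should simply quote the compact-manifold log-HLS as the paper does.
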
 
To explicitly derive Lemma \ref{lem:log-HLS}, we need the following logarithmic Hardy-Littlewood-Sobolev inequality on a compact manifold and a comparison estimate.  
\begin{theorem}[Logarithmic Hardy-Littlewood-Sobolev inequality, \cite{SW}]
Let $\mathcal{M}$ be a two-dimensional  compact closed  Riemannian manifold. For all $\widetilde{M} > 0$, there exists a constant $C(\widetilde{M})<\infty$ such that for all non-negative functions $f \in L_+^1(\mathcal{M})$ subject to constraints $f \log f \in L^1$ and $\int_{\mathcal{M}} f  =\widetilde M$, the following estimate holds
\begin{align}\label{log HLS T2}
\int_{\mathcal{M}}f \log fdY+\frac{2}{\|f\|_{L^1(\mathcal{M})}}\iint_{\mathcal{M}\times\mathcal{M}} f(Z) f(\by) \log d(\by,Z) dYdZ\geq -C(\|f\|_{L^1(\mathcal{M})}),
\end{align}
where $d(Y,Z)$ is the distance on the Riemannian manifold $\mathcal{M}$.
\end{theorem}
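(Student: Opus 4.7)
The plan is to derive the Green's function form of log-HLS directly from the manifold version stated in the quoted theorem, by decomposing $G_{\Torus^2}$ into its logarithmic singularity and a bounded remainder, and then carefully expanding the $(f-\overline{f})$ factors.

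First I would recall the standard decomposition of the Green's function of $-\Delta$ on the flat torus. Since $-\Delta_Y G_{\Torus^2}(Y,Z)=\delta_Z(Y)-|\Torus^2|^{-1}$, and the fundamental solution of $-\Delta$ on $\R^2$ is $-\tfrac{1}{2\pi}\log|Y-Z|$, one can write
\begin{align}
G_{\Torus^2}(Y,Z)=-\tfrac{1}{2\pi}\log d(Y,Z)+R(Y,Z),
\end{align}
where $d(\cdot,\cdot)$ denotes the intrinsic (geodesic) distance on $\Torus^2$ and $R\in L^\infty(\Torus^2\times\Torus^2)$ is a smooth (in fact bounded) regular part. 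Plugging this decomposition into the double integral on the left-hand side of \eqref{log-HLS} yields
\begin{align}
\iint(f-\overline{f})(Y)\,G_{\Torus^2}(Y,Z)\,(f-\overline{f})(Z)\,dYdZ
= -\tfrac{1}{2\pi}\iint(f-\overline{f})(Y)\log d(Y,Z)(f-\overline{f})(Z)\,dYdZ + \mathcal{R}[f],
\end{align}
and the remainder $\mathcal{R}[f]$ is bounded by $\|R\|_\infty\|f-\overline{f}\|_{L^1}^2\leq C(\|f\|_1)$, hence absorbed into the constant $C(\|f\|_1)$.

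Next I would exploit the translation invariance of $\Torus^2$: the function $Y\mapsto\int_{\Torus^2}\log d(Y,Z)\,dZ$ is a constant, call it $c_0$. Expanding $(f-\overline{f})(Y)(f-\overline{f})(Z)$ into four pieces and using this invariance, all cross terms involving $\overline f$ collapse to explicit constants times $\|f\|_1$ and $\bar f$, giving
\begin{align}
\iint(f-\overline{f})\log d\,(f-\overline{f})\,dYdZ
= \iint f(Y)\log d(Y,Z)f(Z)\,dYdZ - \bar f\,c_0\,\|f\|_1,
\end{align}
so again the extra term is absorbed into $C(\|f\|_1)$.

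Finally I would invoke the manifold log-HLS inequality \eqref{log HLS T2} applied with $\mathcal{M}=\Torus^2$ (equipped with its flat Riemannian metric). That inequality rearranges to
\begin{align}
-\tfrac{1}{2\pi}\iint f(Y)\log d(Y,Z)f(Z)\,dYdZ
\leq \tfrac{\|f\|_1}{4\pi}\int f\log f\,dY + C(\|f\|_1).
\end{align}
Substituting this bound into the expansion above recovers precisely the claimed inequality \eqref{log-HLS}. The only nontrivial technical point, and the step I would verify most carefully, is the boundedness of the regular part $R$ of $G_{\Torus^2}$; this is a standard fact for periodic Green's functions (one can, for instance, construct $R$ via the Poisson summation formula or by subtracting a mollified logarithm and solving a smooth Poisson problem on the torus), but it is the one piece that requires more than purely algebraic manipulation of the theorem already cited.
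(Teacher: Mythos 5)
Your proposal does not prove the statement in question---it assumes it. The statement is the logarithmic Hardy--Littlewood--Sobolev inequality \eqref{log HLS T2} on an arbitrary two-dimensional compact closed Riemannian manifold, which the paper does not prove but imports from \cite{SW}. Your argument takes \eqref{log HLS T2} (applied with $\mathcal{M}=\Torus^2$) as an input and deduces from it the Green's-function form \eqref{log-HLS}; that deduction is precisely the content of Lemma \ref{lem:log-HLS} in the paper, which is obtained there by combining the quoted theorem with the comparison estimate of Lemma \ref{lemFreEn_T2}. As a proof of the quoted theorem your outline is therefore circular, and it also addresses only the flat torus rather than a general compact closed surface.

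The gap is substantial because the manifold log-HLS inequality is a genuinely deep functional inequality: its proof in the literature rests on sharp Moser--Trudinger/Onofri-type inequalities and duality (or symmetrization/rearrangement arguments on $S^2$ in the spirit of Carlen--Loss and Beckner), none of which appear in your outline, and no amount of Green's-function bookkeeping on $\Torus^2$ can substitute for that input---which is exactly why the paper cites \cite{SW} rather than proving the theorem. If your actual target was Lemma \ref{lem:log-HLS}, then your route is essentially sound: the decomposition $G_{\Torus^2}(Y,Z)=-\frac{1}{2\pi}\log d(Y,Z)+R(Y,Z)$ with $R$ bounded, plus translation invariance of $Y\mapsto\int\log d(Y,Z)\,dZ$ to absorb the $\overline{f}$ cross terms into $C(\|f\|_{L^1})$, is a correct and arguably cleaner alternative to the cutoff construction underlying Lemma \ref{lemFreEn_T2} (your constant in the cross-term identity is slightly off, but it is harmless since it depends only on $\|f\|_{L^1}$). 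That, however, is a different statement from the one you were asked to prove.
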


The comparison estimate is included in the following lemma. 
\begin{lem}\label{lemFreEn_T2}
Consider $f\in L^1_+(\Torus^2)$ with average $\overline{f}$, subject to constraint $f\log f\in L^1(\Torus^2)$. There exists a constant $0<C<\infty$, such that the following estimate holds
\begin{align}
- \iint_{\mathbb{T}^2\times \Torus^2}(f(Y)-\overline{f})G_{\mathbb{T}^2}(Y,Z)(f(Z)-\overline{f}) dYdZ
&\geq\frac{1}{2\pi}\iint_{\mathbb{T}^2\times\mathbb{T}^2}f(Y)f(Z)\log d(Y,Z) dYdZ-C\|f\|_{L^1(\Torus^2)}^2. 
\end{align}
Here $G_{\Torus^2}$ is the Green's function of the negative Laplacian on $ \mathbb{T}^2$ \eqref{defn_G_T2}.
\end{lem}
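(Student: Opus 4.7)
The plan is to reduce the Green's-function bilinear form to an explicit logarithmic kernel plus controlled errors, using the fact that on the flat torus $\mathbb{T}^2$ the Green's function of $-\Delta$ has the same diagonal singularity as the Newtonian kernel on $\mathbb{R}^2$.

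\textbf{Step 1: singular/regular decomposition of $G_{\mathbb{T}^2}$.} I would start by writing
\begin{align}
G_{\mathbb{T}^2}(Y,Z) = -\frac{1}{2\pi}\log d(Y,Z) + H(Y,Z),
\end{align}
where $H$ is the ``regular part.'' This is standard: the fundamental solution of $-\Delta$ on $\mathbb{R}^2$ is $-\frac{1}{2\pi}\log|Y-Z|$, and the correction needed to make the periodic Green's function solve $-\Delta_Y G_{\mathbb{T}^2}(Y,Z) = \delta_Z - |\mathbb{T}^2|^{-1}$ with zero mean is smooth away from the diagonal and matches the $\mathbb{R}^2$ singularity locally (obtained, e.g., by periodizing with a smooth cutoff or by spectral truncation). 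In particular $H$ is continuous on $\mathbb{T}^2\times\mathbb{T}^2$, hence $\|H\|_{L^\infty}\leq C$.

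\textbf{Step 2: insert the decomposition.} Substituting into the left-hand side gives
\begin{align}
-\iint (f-\overline{f})(Y)\,G_{\mathbb{T}^2}(Y,Z)\,(f-\overline{f})(Z)\,dYdZ
&= \frac{1}{2\pi}\iint (f-\overline{f})(Y)\,(f-\overline{f})(Z)\,\log d(Y,Z)\,dYdZ \\
&\quad -\iint (f-\overline{f})(Y)\,H(Y,Z)\,(f-\overline{f})(Z)\,dYdZ.
\end{align}
The $H$-term is bounded in absolute value by $\|H\|_\infty\|f-\overline{f}\|_{L^1}^2 \leq 4\|H\|_\infty\|f\|_{L^1}^2$, which is absorbed into $C\|f\|_{L^1}^2$.

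\textbf{Step 3: replace $(f-\overline{f})(f-\overline{f})$ by $f\cdot f$.} Expanding,
\begin{align}
(f-\overline{f})(Y)(f-\overline{f})(Z) = f(Y)f(Z) - \overline{f}\,f(Y) - \overline{f}\,f(Z) + \overline{f}^{\,2}.
\end{align}
By translation invariance on the torus, $\int_{\mathbb{T}^2}\log d(Y,Z)\,dZ =: L_0$ is a constant independent of $Y$. Therefore the ``cross'' contributions satisfy
\begin{align}
\left|\overline{f}\iint f(Y)\log d(Y,Z)\,dYdZ\right| + \overline{f}^{\,2}\iint |\log d(Y,Z)|\,dYdZ \leq C\|f\|_{L^1}^2,
\end{align}
using $\overline{f}=\|f\|_{L^1}/|\mathbb{T}^2|$ and the integrability of $\log d$ on $\mathbb{T}^2$.

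\textbf{Step 4: assemble.} Combining Steps 2 and 3,
\begin{align}
-\iint (f-\overline{f})\,G_{\mathbb{T}^2}\,(f-\overline{f})\,dYdZ \geq \frac{1}{2\pi}\iint f(Y)f(Z)\log d(Y,Z)\,dYdZ - C\|f\|_{L^1}^2,
\end{align}
which is the desired inequality.

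\textbf{Main obstacle.} The only nontrivial point is the rigorous justification of the decomposition in Step 1, i.e.\ that the regular part $H$ of the periodic Green's function is globally bounded (indeed continuous) on $\mathbb{T}^2\times\mathbb{T}^2$. Once this is in hand, everything else is bookkeeping with $L^1$-bounds and the translation-invariance of $\int\log d(Y,\cdot)$. One standard way to establish boundedness of $H$ is to write $G_{\mathbb{T}^2}$ as a periodization of the $\mathbb{R}^2$ fundamental solution with a fixed smooth cutoff near the diagonal, subtract the local $-\frac{1}{2\pi}\log d$ singularity, and verify that the remaining series converges uniformly.
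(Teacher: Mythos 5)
Your proposal is correct. Note first that the paper does not actually prove this lemma in-line: it simply cites \cite{BedrossianHe16} and Lemma 3.2 of \cite{GongHe20}. The argument in those references (and the standard one in this literature) proceeds by localizing near the diagonal with a smooth cutoff $\varphi_Y$ supported in a small ball around $Y$, representing $e_f=G_{\Torus^2}\ast(f-\overline{f})$ through the $\rr^2$ fundamental solution applied to $\varphi_Y e_f$, and then estimating the interaction energy region by region, controlling the commutator terms by $\|f\|_{L^1}\|e_f\|_{L^1}\lesssim\|f\|_{L^1}^2$. Your route is logically the same fact packaged globally: you assert up front the decomposition $G_{\Torus^2}(Y,Z)=-\tfrac{1}{2\pi}\log d(Y,Z)+H(Y,Z)$ with $H$ bounded on $\Torus^2\times\Torus^2$, after which Steps 2--4 are straightforward $L^1$ bookkeeping (using $\|f-\overline{f}\|_{L^1}\leq 2\|f\|_{L^1}$ for $f\geq0$, the translation invariance of $\int\log d(Y,\cdot)\,dZ$, and the integrability of the logarithmic singularity). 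This is cleaner to read, and in fact yields the inequality as an identity up to $O(\|f\|_{L^1}^2)$. The trade-off is that you have deferred the entire substance of the lemma to Step 1: the boundedness of the regular part $H$ \emph{is} the lemma, and your sketch of its proof (periodize the Newtonian kernel with a fixed cutoff near the diagonal and check uniform convergence of the remainder) is essentially a repackaging of the cutoff computation carried out explicitly in the cited works. One minor point worth a sentence in a full write-up: $H$ is continuous but only Lipschitz (not smooth) across the cut locus of $d(\cdot,\cdot)$, which is harmless here since only $\|H\|_{L^\infty}$ is used; and if $\iint f(Y)f(Z)\log d(Y,Z)\,dYdZ=-\infty$ the claimed inequality is vacuous, so one may assume it finite when splitting the integrals.
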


\begin{proof}
The proof of this lemma appears in the paper \cite{BedrossianHe16}, and Lemma 3.2 in \cite{GongHe20}. 
Hence we omit the details for the sake of brevity.\ifx 
\myb{
Let $Y \in \Torus^2$ be fixed. Define the cut-off function $\varphi_Y(Z)\in C^\infty$ such that
\begin{align}
supp(\varphi_Y)=&B(Y,1/4),\\
\varphi_Y(Z)\equiv& 1,\forall Z\in B(Y,1/8),\\
supp(\na \varphi_Y(Z))\subset& \overline{B}(Y,1/4)\backslash B(Y,1/8).
\end{align}
By extending $f(Z)$ and $e_f(Z)$ periodically to $\rr^2$, we can rewrite the equation $-\de e_f=f-\overline{f}$ on $\mathbb{T}^2$ such that it is posed on $\rr^2$:
\begin{align}
-\de_Z (\varphi_Y(Z)e_f(Z))=(f(Z)-\overline{f})\varphi_Y(Z)-2\na_Z \varphi_Y(Z)\cdot\na_Z e_f(Z)-\de_Z\varphi_Y(Z)e_f(Z).
\end{align}
Using the fundamental solution of the Laplacian on $\rr^2$:
\begin{align}
e_f(Y)=&e_f(Y)\varphi_Y(Y)\\
=&-\frac{1}{2\pi}\int_{\rr^2}\log|Y-Z|\bigg((f(Z)-\overline{f})\varphi_Y(Z)-2\na_Z \varphi_Y(Z)\cdot\na_Z e_f(Z)-\de_Z\varphi_Y(Z)e_f(Z)\bigg)dZ\\
=&-\frac{1}{2\pi}\int_{|Y-Z|\leq \frac{1}{4}}\log|Y-Z|(f(Z)-\overline{f})\varphi_Y(Z)dZ-\frac{1}{\pi}\int_{|Y-Z|\leq \frac{1}{4}}\na_Z\cdot(\log|Y-Z|\na_Z \varphi_Y(Z)) e_f(Z)dZ\\
&+\frac{1}{2\pi}\int_{|Y-Z|\leq \frac{1}{4}}\log|Y-Z|\de_Z\varphi_Y(Z)e_f(Z)dZ.
\end{align}
Due to the support of $\varphi_Y$, we can identify the above with an analogous integral on $\mathbb{T}^2$ with $\abs{Y-Z}$ replaced by $d(Y,Z)$.
Therefore, we have the following estimate on the interaction energy,
\begin{align*}
-&\frac{1}{2}\int\displaylimits_{\mathbb{T}^2}(f(Y)-\overline{f})e_f(Y) dY\\
=&\frac{1}{4\pi}\iint\displaylimits_{\substack{\mathbb{T}^2\times\mathbb{T}^2\\d(Y,Z)\leq \frac{1}{4}}}\log d(Y,Z)(f(Y)-\overline{f})(f(Z)-\overline{f})\varphi_Y(Z)dZdY+\frac{1}{2\pi}\iint\displaylimits_{\substack{\mathbb{T}^2\times\mathbb{T}^2\\ \frac{1}{8}\leq d(Y,Z)\leq \frac{1}{4}}}(f(Y)-\overline{f})\na_Z\cdot(\log d(Y,Z)\na_Z \varphi_Y(Z)) e_f(Z)dZdY\\
&-\frac{1}{4\pi}\iint\displaylimits_{\substack{\mathbb{T}^2\times\mathbb{T}^2\\\frac{1}{8}\leq d(Y,Z)\leq \frac{1}{4}}}(f(Y)-\overline{f})\log d(Y,Z)\de_Z\varphi_Y(Z)e_f(Z)dZdY\\
=&\frac{1}{4\pi}\iint\displaylimits_{d(Y,Z)\leq \frac{1}{8}}\log d(Y,Z)(f(Y)-\overline{f})(f(Z)-\overline{f})dZdY+\frac{1}{4\pi}\iint\displaylimits_{\frac{1}{8}\leq d(Y,Z)\leq \frac{1}{4}}\log d(Y,Z)(f(Y)-\overline{f})(f(Z)-\overline{f})\varphi_Y(Z)dZdY\\
&+\frac{1}{2\pi}\iint\displaylimits_{\frac{1}{8}\leq d(Y,Z)\leq \frac{1}{4}}(f(Y)-\overline{f})\na_Z\cdot(\log d(Y,Z)\na_Z \varphi_Y(Z)) e_f(Z)dZdY-\frac{1}{4\pi}\iint\displaylimits_{\frac{1}{8}\leq d(Y,Z)\leq \frac{1}{4}}(f(Y)-\overline{f})\log d(Y,Z)\de_Z\varphi_Y(Z)e_f(Z)dZdY\\
=&\frac{1}{4\pi}\iint\displaylimits_{\mathbb{T}^2\times\mathbb{T}^2}\log d(Y,Z)f(Y)f(Z)dZdY-\frac{1}{4\pi}\iint\displaylimits_{d(Y,Z)> \frac{1}{8}}\log d(Y,Z)f(Y)f(Z)dZdY\\
&-\frac{1}{2\pi}\overline{f}\iint\displaylimits_{d(Y,Z)\leq \frac{1}{8}}\log d(Y,Z)f(Y)dZdY+\frac{1}{4\pi}\overline{f}^2\iint\displaylimits_{d(Y,Z)\leq \frac{1}{8}}\log d(Y,Z)dZdY\\
&+\frac{1}{4\pi}\iint\displaylimits_{\frac{1}{8}\leq d(Y,Z)\leq \frac{1}{4}}\log d(Y,Z)(f(Y)-\overline{f})(f(Z)-\overline{f})\varphi_Y(Z)dZdY\\
&+\frac{1}{2\pi}\iint\displaylimits_{\frac{1}{8}\leq d(Y,Z)\leq \frac{1}{4}}(f(Y)-\overline{f})\na_Z\cdot(\log d(Y,Z)\na_Z \varphi_Y(Z)) e_f(Z)dZdY-\frac{1}{4\pi}\iint\displaylimits_{\frac{1}{8}\leq d(Y,Z)\leq \frac{1}{4}}(f(Y)-\overline{f})\log d(Y,Z)\de_Z\varphi_Y(Z)e_f(Z)dZdY.
\end{align*}
The 2nd, 3rd, 4th, 5th terms in the last line are bounded below by $-C\|f\|_1^2$ for some constant $C > 0$.
The 6th and 7th terms are bounded below by $-C \|f\|_1 \|e_f\|_{L^1}$ for some constant $C > 0$, using the fact that $\na_Z\cdot(\log|Y-Z|\na_Z \varphi_Y(Z))$ and $\log|Y-Z|\de_Z\varphi_Y(Z)$ are bounded in the region $\frac{1}{8}\leq|Y-Z|\leq \frac{1}{4}$.
Denoting $K(Z)$ to be the fundamental solution of the Laplacian on $\mathbb{T}^2$, by Young's inequality, we have
\begin{align}
\|e_f\|_{L^1(\mathbb{T}^2)}=&\|K\ast (f-\overline{f})\|_{L^{1}(\mathbb{T}^2)}\leq\|K\|_{L^1(\mathbb{T}^2)}\|f-\overline{f}\|_{L^{1}(\mathbb{T}^2)}\leq C \|f\|_{L^1(\Torus^2)}.
\end{align}}\fi
\end{proof}

\begin{proof}[Proof of Lemma \ref{lem:log-HLS}] \ifx To prove the estimate \eqref{log-HLS}, we apply a gluing technique. 
First we observe that the torus $2\mathbb{T}\times \mathbb{T}$ can be decomposed into two parts $\mathcal{D}_\by^+$ and $\mathcal{D}_\by^-$, each one of them is a copy of $\mathcal{D}_\by$. Note that we have shift the coordinate so that the domain $\mathcal{D}_\by$ is identical to $\mathcal{D}_\by^-$. Recall that the function $e_f$ can be represented as 
$e_f=\int_{\mathcal{D}_\by} G_M(Y,Z) (f(Z)-\overline{f})dZ$.  Note that the even extension $\wt f$ is symmetric about the $Y $ axis. Now by Lemma \ref{lem:equivalence}, we see that  to derive the estimate \eqref{log-HLS}
\begin{align}
\frac{2\mf M_0}{4\pi}\int_{\mathcal{D}_\by} f \log f dY-\int_{\mathcal{D}_\by\times \mathcal{D}_{\by}}(f(Z)-\overline{f})G_M(Z,y)(f(y)-\overline{f})dYdZ\geq -C(M),
\end{align} 
it is enough to derive the following estimate for the even reflection $\wt f$ and $\wt e_f=\int G_{2\mathbb{T}\times \mathbb{T}}(y,Z)(\wt f(Z)-\overline{f})dZ$
\begin{align}
\frac{2\mf M_0}{4\pi}&\int_{\mathcal{D}_\by^+} f \log fdY-\int_{\mathcal{D}_\by^+}(\wt f(y)-\overline{f})e_{\wt f}(y)dY\\
=&\frac{2\mf M_0}{4\pi}\int_{\mathcal{D}_\by^+} f \log fdY-\int_{\mathcal{D}_\by^+\times (2\mathbb{T}\times\mathbb{T})}(\wt f(y)-\overline{f})G_{2\mathbb{T}\times\mathbb{T}}(Z,y)
(\wt f(Z)-\overline{f})dYdZ\\
\geq& -C(\mf M_0). 
\end{align}
Applying the even symmetry of $\wt f$ and $\wt e_f$ 
with respect to the $y $-axis, we have that the inequality is equivalent to 
\begin{align}
\frac{2\mf M_0}{4\pi}&\int_{\mathcal{D}_\by^-} f \log fdY-\int_{\mathcal{D}_\by^-}(\wt f(y)-\overline{f})e_{\wt f}(y)dY\\
=&\frac{2\mf M_0}{4\pi}\int_{\mathcal{D}_\by^-} f \log fdY-\int_{\mathcal{D}_\by^-\times (2\mathbb{T}\times\mathbb{T})}(\wt f(y)-\overline{f})G_{2\mathbb{T}\times\mathbb{T}}(Z,y)
(\wt f(Z)-\overline{f})dYdZ\\
\geq& -C(\mf M_0). 
\end{align}\fi
Combining the logarithmic HLS inequality \eqref{log HLS T2} and Lemma \ref{lemFreEn_T2} yields that 
\begin{align}
\frac{\|f\|_{L^1(\mathbb{T}^2)}}{4\pi}&\int_{\mathbb{T}^2} f \log fdY-\iint_{\mathbb{T}^2\times \mathbb{T}^{2}}(f(Y)-\overline{f})G_{\mathbb{T}^2}(Y,Z)(f(Z)-\overline{f})dYdZ\\
\geq &\frac{\| f\|_{L^1(\mathbb{T}^2)}}{4\pi}\int_{\mathbb{T}^2} f \log fdY +\frac{1}{2\pi}\iint_{\mathbb{T}^2\times\mathbb{T}^{2}}f(Y)f(Z)\log d(Y,Z)dYdZ-C\|f\|_{L^1(\Torus^2)}^2\\
\geq& -C(\|f\|_{L^1(\Torus^2)})-C\|f\|_{L^1(\Torus^2)}^2\geq -C(\|f\|_{L^1(\Torus^2)}).
\end{align}
This is \eqref{log-HLS}.
\end{proof}

\begin{lem}
Consider the solutions $\nz ,\,\cz $ to the $x$-averaged equations  \eqref{ppPKS_0md}. Assume that $\Mass_0=\|\nz \|_{L_Y^1(\Torus^2)}<8\pi$, and the magnitude $A$ is chosen large enough compared to the constants in Hypotheses \eqref{Hypotheses}. Then the following estimate holds
\begin{align}\label{EntrBound}\\
\int_{\Torus^2} \nz (t,Y)\log^+ \nz (t,Y)dY+\int_{\Torus^2} |\na \cz (t,Y)|^2dY \leq C_{L\log L}(\mf M_0, E[\lan n_{\mathrm{in}}\ran,\lan \cc_{\mathrm{in}}\ran])<\infty,\quad \forall t\in [0,T_\star].
\end{align}
Here $\log^+(\cdot)=\max\{0,\log (\cdot)\}$ is the positive part of the logarithmic function. 
\end{lem}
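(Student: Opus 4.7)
My strategy is to combine the three ingredients developed in this section: the free-energy dissipation estimate \eqref{frentm_est}, the chemical potential minimization inequality \eqref{ChemEngMin}, and the logarithmic Hardy-Littlewood-Sobolev inequality \eqref{log-HLS}. The decomposition $E=S+P$ from \eqref{ChemPot} shows that controlling $S[\langle n\rangle]$ and $\tfrac12\|\nabla\langle c\rangle\|_{L^2}^2$ amounts to inverting this decomposition while exploiting the subcritical mass assumption $\mathfrak{M}_0<8\pi$.

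First, I would apply Lemma \eqref{ChemEngMin} to $f=\langle n\rangle$, $e=\langle c\rangle$ (note $\overline{\langle c\rangle}=0$ by \eqref{Chem_cnsrv}, matching the normalization of $e_f$), obtaining
\begin{align}
P[\langle n\rangle,\langle c\rangle]\ \geq\ P[\langle n\rangle,e_{\langle n\rangle}]\ =\ -\tfrac{1}{2}\|\nabla e_{\langle n\rangle}\|_{L^2(\Torus^2)}^2.
\end{align}
By the integral representation \eqref{defn_G_T2} and integration by parts, $\|\nabla e_{\langle n\rangle}\|_{L^2}^2 = \iint(\langle n\rangle-\overline{n})G_{\Torus^2}(\langle n\rangle-\overline{n})\,dYdZ$, so Lemma \ref{lem:log-HLS} with $f=\langle n\rangle$ and the mass conservation $\|\langle n\rangle\|_{L^1}=\mathfrak{M}_0$ yields
\begin{align}
\tfrac{1}{2}\|\nabla e_{\langle n\rangle}\|_{L^2}^2\ \leq\ \tfrac{\mathfrak{M}_0}{8\pi}\int_{\Torus^2}\langle n\rangle\log\langle n\rangle\,dY\,+\,C(\mathfrak{M}_0).
\end{align}
Combining the last two displays with $E=S+P$ gives
\begin{align}
E[\langle n\rangle,\langle c\rangle]\ \geq\ \Big(1-\tfrac{\mathfrak{M}_0}{8\pi}\Big)\int_{\Torus^2}\langle n\rangle\log\langle n\rangle\,dY\,-\,C(\mathfrak{M}_0).
\end{align}
Since $1-\mathfrak{M}_0/(8\pi)>0$ by the critical mass assumption, and since \eqref{frentm_est} bounds the left side uniformly in $t\in[0,T_\star]$ (the correction $\delta^{-1}A^{-1/3}C(\mathfrak{B}_{2,\infty})$ is harmless once $A$ is large), this closes the entropy bound. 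The $\log^+$ variant follows from the elementary observation that $x|\log^- x|\leq e^{-1}$ on $(0,1)$, so $\int\langle n\rangle\log^+\langle n\rangle\,dY\leq\int\langle n\rangle\log\langle n\rangle\,dY+e^{-1}|\Torus^2|$.

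For the gradient bound on $\langle c\rangle$, I would unwind the identity implicit in \eqref{ChemEngMin}, namely
\begin{align}
P[\langle n\rangle,\langle c\rangle]\ =\ \tfrac{1}{2}\|\nabla\langle c\rangle-\nabla e_{\langle n\rangle}\|_{L^2}^2\,-\,\tfrac{1}{2}\|\nabla e_{\langle n\rangle}\|_{L^2}^2,
\end{align}
so that $\tfrac12\|\nabla\langle c\rangle-\nabla e_{\langle n\rangle}\|_{L^2}^2 = E - S + \tfrac12\|\nabla e_{\langle n\rangle}\|_{L^2}^2$. The right side is bounded: $E$ is controlled by \eqref{frentm_est}, $\|\nabla e_{\langle n\rangle}\|_{L^2}^2$ is now controlled by the entropy bound and log-HLS, and $-S[\langle n\rangle]\leq C(\mathfrak{M}_0,|\Torus^2|)$ since $x\log x\geq -e^{-1}$. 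The triangle inequality $\|\nabla\langle c\rangle\|_{L^2}\leq \|\nabla\langle c\rangle-\nabla e_{\langle n\rangle}\|_{L^2}+\|\nabla e_{\langle n\rangle}\|_{L^2}$ then delivers \eqref{EntrBound}.

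The main conceptual step, and the only place the critical threshold $8\pi$ is used, is producing a strictly positive coefficient in front of $\int\langle n\rangle\log\langle n\rangle\,dY$; the chemical potential minimization is what allows us to replace the parabolic-parabolic potential $P[\langle n\rangle,\langle c\rangle]$ by the parabolic-elliptic one $P[\langle n\rangle,e_{\langle n\rangle}]$ before invoking log-HLS. The remaining technical nuisance is that the free energy dissipation \eqref{frentm_est} produces an $O(A^{-1/3})$ remainder from the nonlinear $\langle n_{\neq}\nabla c_{\neq}\rangle$ coupling, but this is absorbed by choosing $A$ large enough in terms of $\mathfrak{B}_{2,\infty}$ and the initial free energy, which is permissible under the hypotheses of Theorem \ref{thm_1}.
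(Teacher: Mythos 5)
Your proposal is correct: it rests on exactly the three ingredients the paper assembles (the free-energy bound \eqref{frentm_est}, the potential-minimization Lemma giving \eqref{ChemEngMin} and \eqref{P_relation}, and the logarithmic HLS inequality \eqref{log-HLS}), and every step checks out, including the absorption of the $O(A^{-1/3})$ remainder and the passage from $\log$ to $\log^+$. The one genuine difference is how the $\|\na\cz\|_{L^2}^2$ bound is produced. The paper applies the potential comparison to the dilated density $\nz/(1-\zeta)$ and uses the exact splitting $P[\nz,\cz]=(1-\zeta)P\big[\tfrac{\nz}{1-\zeta};\cz\big]+\tfrac{\zeta}{2}\int|\na\cz|^2\,dY$, so that a single chain of inequalities yields simultaneously the entropy bound (with coefficient $1-\tfrac{\Mass_0}{8\pi(1-\zeta)}$, positive after choosing $\zeta=\zeta(\Mass_0)$ small) and the surplus $\tfrac{\zeta}{2}\|\na\cz\|_{L^2}^2$. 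You instead take $\zeta=0$, first close the entropy bound with the full coefficient $1-\tfrac{\Mass_0}{8\pi}$, and then recover the gradient a posteriori from the identity $P[f;e]=\tfrac12\|\na(e-e_f)\|_{L^2}^2-\tfrac12\|\na e_f\|_{L^2}^2$ together with the triangle inequality and a second use of log-HLS to control $\|\na e_{\nz}\|_{L^2}$. Both routes use the strict subcriticality $\Mass_0<8\pi$ in the same way; your two-step version avoids introducing the auxiliary parameter $\zeta$ at the cost of invoking the minimization identity twice, while the paper's one-pass version bundles both conclusions into the single display \eqref{Elwbnd}. Either argument proves \eqref{EntrBound} with constants depending only on $\Mass_0$ and $E[\lan n_{\mathrm{in}}\ran,\lan\cc_{\mathrm{in}}\ran]$.
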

\begin{proof}
Now we apply the chemical potential comparison \eqref{ChemEngMin} 
by setting the function $f$ to be $\frac{\nz }{1-\zeta}$, $e_f=e_{\frac{\nz }{1-\zeta}}$, and the function $e$ to be $c$. Here $0<\zeta<1$ is a small constant to be chosen later. Now we have the following relation by applying \eqref{P_relation}
\begin{equation}\label{4.5}
P\left[\frac{\nz }{1-\zeta},\cz \right]\geq P\left[\frac{\nz }{1-\zeta},e_{\frac{\nz }{1-\zeta}}\right]=-\frac{1}{2(1-\zeta)}\int (\nz (t,Y)-\overline{n})e_{\frac{\nz }{1-\zeta}}(t,Y)dY.
\end{equation} 
Combining the definitions of $E$ \eqref{Free_energy}, $P$ \eqref{ChemPot}, the lower bound \eqref{4.5}, the logarithmic  Hardy-Littlewood-Sobolev inequality on $\Dy$ \eqref{log-HLS}, the free energy bound \eqref{frentm_est}, and  taking $A$ large enough compared to the constants in the Hypotheses \eqref{Hypotheses}, we have that for $t\in[0,T_\star]$, the following relation holds
\begin{align}\label{Elwbnd}\\
E[ &\lan n_{\mathrm{in}}\ran , \lan \cc_{\mathrm{in}}\ran ]+1\\
\geq &E[\nz (t),\cz (t)]\\
\geq&\int \nz \log \nz dY+(1-\zeta)P\left[\frac{\nz }{1-\zeta};\cz \right]+\frac{1}{2}\zeta\int |\na \cz |^2 dY\nonumber\\
\geq&\int \nz \log \nz dY-\frac{1}{2(1-\zeta)}\iint_{\Dy\times \Dy} (\nz (Y)-\overline{n})G_{\Torus^2}(Y,Z)(\nz (Z)-\overline{n})dYdZ +\frac{1}{2}\zeta\int |\na \cz |^2dY\nonumber\\
\geq&\left(1-\frac{\mf M_0}{8\pi(1-\zeta)}\right)\int \nz \log \nz dY\\&+\frac{\mf M_0}{8\pi(1-\zeta)}\left(\int \nz \log \nz  dY-\frac{4\pi}{\mf M_0}\iint_{\Dy\times\Dy} (\nz (Y)-\overline{n})G_{\Dy}(Y,Z)(\nz (Z)-\overline{n}) dYdZ\right)\\
& +\frac{1}{2}\zeta\int |\na \cz |^2dY\\
\geq&\left(1-\frac{\mf M_0}{8\pi(1-\zeta)}\right)\int \nz \log \nz dY-\frac{1}{1-\zeta}C(\mf M_0) +\frac{1}{2}\zeta\int |\na \cz |^2dY.
\end{align}
Since $\mf M_0<8\pi$, there exists a constant $\zeta=\zeta(\mf M_0)\in(0,1)$ such that $1-\zeta> \frac{\mf M_0}{8\pi}$.  We fix such a $\zeta$ and reorganize the terms to  obtain that 
\begin{align}
\int \nz (t,Y)\log \nz (t,Y)dY +\int |\na \cz (t,Y)|^2dY\leq C_{L\log L}(\mf M_0,\zeta, E[\lan n_{\mathrm{in}}\ran, \lan\cc_{\mathrm{in}}\ran])<\infty,\quad \forall t\in [0,T_\star].  
\end{align}
Since the domain $\Dy$ is bounded and $\nz \log^-\nz $ is bounded from below, the negative part of the entropy $\int n\log^- ndx$ is bounded. Combining this and the fact that $\zeta$ depends only on $\mf M_0<8\pi$, we have that:
\begin{align}
\int \nz (t,Y)\log^+ \nz (t,Y) dY + \int |\na \cz (t,Y)|^2dY\leq C_{L\log L}(\mf M_0, E[\lan n_{\mathrm{in}}\ran, \lan\cc_{\mathrm{in}}\ran])<\infty,\quad \forall t\in [0,T_\star).
\end{align}
\ifx
Since we consider the bounded domain $\Dy$ and assume that the average of $\cz $ is zero \eqref{Chem_cnsrv}, Poincar\'e inequality yields that
\begin{align}
\|\cz (t,\cdot)\|_{L^2(\Dy)}\leq C_P \|\na_Y   \cz (t,\cdot)\|_{L^2(\Dy)}\leq C_{L\log L}<\infty.
\end{align} \fi
This concludes the proof of the lemma.
\end{proof}

\begin{lem}
Assume that the entropy and chemical gradient bound \eqref{EntrBound} holds, then the following estimate of the $L^2 $ norm $\|n\|_2$ holds
\begin{align}
\|\nz(t)\|_{L^2}\leq C_{\nz ;L^2}(\|\lan n_{\mathrm{in}}\ran\|_2, C_{L\log L}(\mf M_0, E[\lan n_{\mathrm{in}}\ran, \lan\cc_{\mathrm{in}}\ran]) )<\infty,\quad \forall t\in [0,T_\star].\label{L2case}
\end{align}

\end{lem}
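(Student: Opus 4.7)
The plan is to derive a differential inequality for $\|\lan n\ran\|_{L^2}^2$ by testing the $x$-averaged cell equation \eqref{ppPKS_n} against $\lan n\ran$ itself. After integration by parts and substituting $\de \lan\cc\ran = A\pa_t \lan\cc\ran - \lan n\ran + \overline{n}$ from \eqref{ppPKS_c}, one arrives at the identity
\begin{align*}
\frac{1}{2}\frac{d}{dt}\|\lan n\ran\|_2^2 + \frac{1}{A}\|\na_Y \lan n\ran\|_2^2 = \frac{1}{2A}\int \lan n\ran^3 dY - \frac{1}{2}\int \lan n\ran^2 \pa_t \lan\cc\ran\, dY - \frac{\overline{n}}{2A}\|\lan n\ran\|_2^2 + \frac{1}{A}\int \na_Y \lan n\ran\cdot \lan n_\nq \na_Y \cc_\nq\ran dY.
\end{align*}
The goal is to show the right-hand side is absorbed by the gradient dissipation plus a $\|\lan n\ran\|_2^2$-coefficient that is integrable in time and an additive bounded forcing.

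For the cubic term $\frac{1}{2A}\int \lan n\ran^3$ I would adopt the parabolic-elliptic comparison advertised in the battle plan: decompose $\lan\cc\ran = e_{\lan n\ran} + w$ where $e_{\lan n\ran}$ is given by \eqref{defn_G_T2} and $w = \lan\cc\ran - e_{\lan n\ran}$ solves $-\de w = -A\pa_t \lan\cc\ran$. Combining the chemical potential comparison \eqref{ChemEngMin} with the logarithmic Hardy-Littlewood-Sobolev inequality (Lemma \ref{lem:log-HLS}), the strict subcriticality $\mf M_0 < 8\pi$, and the entropy/gradient bound \eqref{EntrBound}, one should produce a sharp-constant estimate of the form $\frac{1}{2A}\int \lan n\ran^3 dY \leq (1-\eta)\frac{1}{A}\|\na_Y\lan n\ran\|_2^2 + C(C_{L\log L},\mf M_0)$ for some $\eta = \eta(\mf M_0) > 0$, so that the cubic piece is strictly absorbed, with the deficit $\eta$ reflecting $8\pi - \mf M_0$.

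For the mixed term, Hölder's inequality followed by 2D Ladyzhenskaya $\|v\|_4^4 \leq C\|v\|_2^2\|\na v\|_2^2$ and Young give
\begin{align*}
\frac{1}{2}\Big|\int \lan n\ran^2 \pa_t\lan\cc\ran\, dY\Big| \leq \frac{1}{2}\|\lan n\ran\|_4^2\|\pa_t\lan\cc\ran\|_2 \leq C\|\lan n\ran\|_2\|\na\lan n\ran\|_2\|\pa_t\lan\cc\ran\|_2 \leq \frac{1}{4A}\|\na\lan n\ran\|_2^2 + C A\|\pa_t\lan\cc\ran\|_2^2\|\lan n\ran\|_2^2.
\end{align*}
The remainder term is controlled via Cauchy-Schwarz together with the $L^\infty$ embedding \eqref{Bd_Linf_nq} and the enhanced dissipation hypothesis \eqref{HypED_FM}, giving an error of size $\frac{1}{4A}\|\na\lan n\ran\|_2^2 + CA^{-2}e^{-2\delta t/A^{1/3}}$. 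Assembling everything and choosing $A$ large enough to keep all $\epsilon$-terms below $\frac{1}{A}\|\na\lan n\ran\|_2^2$, one reaches
\begin{align*}
\frac{d}{dt}\|\lan n\ran\|_2^2 \leq C\big(\|\pa_t\lan\cc\ran\|_2^2 + e^{-2\delta t/A^{1/3}}\big)\|\lan n\ran\|_2^2 + C.
\end{align*}
Gronwall's inequality combined with the time-integrated control $A\int_0^\infty \|\pa_t\lan\cc\ran\|_2^2 ds \leq E[\lan n_{\mathrm{in}}\ran,\lan \cc_{\mathrm{in}}\ran] + C(\CC)/(\delta A^{1/3})$ from the free energy bound \eqref{frentm_est} then delivers the uniform-in-time estimate \eqref{L2case}.

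The main obstacle is producing the sharp absorbing inequality for the cubic term. A direct use of the Gagliardo-Nirenberg estimate $\|v\|_3^3 \leq C\|v\|_2^2\|\na v\|_2$ followed by Young yields only a quartic Gronwall term $C\|v\|_2^4$, which would force finite-time blowup. The strict subcriticality $\mf M_0 < 8\pi$ and the $L\log L$ control must therefore enter through a sharp-constant GN/log-HLS argument in the style of \cite{BedrossianHe16} and \cite{CalvezCorrias}, and one also has to verify that the parabolic-parabolic correction $w$ produces only controllable errors proportional to $\|\pa_t\lan\cc\ran\|_2$, which are time-integrable thanks to \eqref{frentm_est}.
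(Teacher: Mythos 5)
Your setup (testing the averaged equation, substituting $\de \cz =A\pa_t\cz-(\nz-\overline n)$, and using the time-integrability of $A\|\pa_t\cz\|_2^2$ from \eqref{frentm_est}--\eqref{PtcTmIntBnd}) is the same skeleton as the paper's, but the two steps you flag as delicate are exactly where the proposal has genuine gaps. First, the cubic term: the inequality you posit, $\frac{1}{2A}\int \nz^3\,dY\leq (1-\eta)\frac1A\|\na_Y\nz\|_2^2+C(C_{L\log L},\mf M_0)$ with a deficit $\eta(\mf M_0)$ "reflecting $8\pi-\mf M_0$", is not something the chemical potential comparison \eqref{ChemEngMin} or the log-HLS inequality \eqref{log-HLS} can deliver: those control the interaction energy by the entropy, they do not bound $\int \nz^3$ by the Dirichlet energy of $\nz$, and a sharp-constant Gagliardo--Nirenberg route would tie the admissible mass to the GN constant rather than to $8\pi$. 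The role of $8\pi$ is already exhausted in establishing \eqref{EntrBound}; what the $L\log L$ bound buys at the $L^2$ stage is the level-set truncation used in the paper: working with $(\nz-K)_+$, the bound \eqref{Chc_eta_K} makes $\|(\nz-K)_+\|_1\leq C_{L\log L}/\log K=\eta_K$ arbitrarily small, so the cubic term is absorbed into the dissipation by plain Gagliardo--Nirenberg with no sharp constant needed (see \eqref{dt(n_0-K)+2} and \eqref{T_L2_1}). You identify this obstacle but do not resolve it; as written the key absorption step is an unproved (and, as formulated, implausible) inequality.

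Second, the endgame does not give a uniform-in-time bound. After absorbing the $\epsilon$-terms you discard the dissipation and invoke Gronwall on $\frac{d}{dt}\|\nz\|_2^2\leq C(\|\pa_t\cz\|_2^2+e^{-2\delta t/A^{1/3}})\|\nz\|_2^2+C$. The multiplicative factor is indeed harmless thanks to \eqref{PtcTmIntBnd}, but the additive forcing (of size $C/A$ per unit time) integrates to $CT_\star/A$, which is unbounded since $T_\star$ is the bootstrap horizon and must be allowed to exceed any multiple of $A$; a bound growing in $T_\star$ cannot close the bootstrap. This is precisely why the paper keeps the dissipation, converts it via the Nash inequality into the Riccati term $-\mf Y^2/(C_1(\mf M_0)A)$ in \eqref{dt(n-K)+2_s2}, and runs a comparison/absorbing-set argument for the ODE: the negative quadratic dominates the linear and constant forcing once $\mf Y$ exceeds a fixed threshold, which is what yields a $T_\star$-independent bound. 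To repair your proof you would need both the truncation (or an equivalent mechanism) for the cubic term and this Riccati-type argument (or some other device) in place of plain Gronwall.
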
 
\begin{proof}Before proving the $L^2$-bound, we estimate the $\|\pa_t \cz \|_{L_t^2 L_Y^2}.$ Since the negative part of the entropy is bounded, we have that $E[\nz (t),\cz (t)]$ is uniformly bounded from below from \eqref{Elwbnd}. As a result, we have from \eqref{frentm_est} that
\begin{align}\label{PtcTmIntBnd}
A\int_{0}^{T_\star}\|\pa_t \cz \|_2^2dt\leq E[\lan n_{\mathrm{in}}\ran,\lan \cc_{\mathrm{in}}\ran]-\inf_{t\in[0,T_\star]} E[\nz (t),\cz (t)]=:C_{\pa_t \cc}(\mf M_0, E[\lan n_{\mathrm{in}}\ran,\lan \cc_{\mathrm{in}}\ran])<\infty.
\end{align}

We start  the $L^2$ estimate. It is classical to consider the  vertically truncated $L^2$ norm $\|(\nz -K)_+\|_2$ for some positive $K\geq \{1, \overline{n}\}$. The entropy bound \eqref{EntrBound} yields that the $L^1$ norm of the function $(\nz -K)_+$ is small, i.e.,
\begin{align}
\|(\nz -K)_+\|_1\leq \int_{\Torus^2}(\nz -K)_+\frac{\log \nz }{\log K}dY\leq \frac{C_{L\log L}}{\log K}=:\eta_K\ll 1.\label{Chc_eta_K}
\end{align}
The time evolution of the $L^2$-norm, i.e., $\|(\nz -K)_+\|_2^2$, can be estimated with the $x$-averaged  equations of $(\nz ,\cz )$ \eqref{ppPKS_0md}: 
\begin{align}\label{dt(n_0-K)+2}
\frac{d}{dt}&\int(\nz -K)_+^2dY\\
=&-\frac{2}{A}\int |\na(\nz -K)_+|^2dY+\frac{1}{2A}\int(\nz -K)_+^2(\nz -\overline{n})dY-\frac{1}{2A}\int (\nz -K)_+^2(A\pa_t\cz )dY\\
&+\frac{2K}{A}\int(\nz -K)_+(\nz -\overline{n})dY -\frac{2K}{A}\int (\nz -K)_+(A\pa_t\cz )dY+\frac{2}{A}\int \na_Y   (\nz -K)_+\cdot \lan n_{\neq}\na_Y   \cc_{\neq}\ran dY\\
=&:-\frac{2}{A}\int |\na(\nz -K)_+|^2dY+\sum_{\ell=1}^5T_{\ell}.
\end{align}
Applying the Gagliardo-Nirenberg inequality and recalling the choice of $\eta_K$ ($K\geq \max\{1,\overline{n}\}$) \eqref{Chc_eta_K}, we obtain that the $T_{1}$ term is bounded above as follows
\begin{align}\label{T_L2_1}\\
T_{1}=&\frac{1}{2A}\|(\nz -K)_+\|_{L^3(\Torus^2)}^3+\frac{K-\overline{n}}{2A}\norm{ (\nz -K)_+}_{L^2(\Torus^2)}^2 \\
\leq& \frac{C }{A} (\|\na (\nz -K)_+\|_{L^2(\Torus^2)}^2\|(\nz -K)_+\|_{L^1(\Torus^2)}+\|(\nz -K)_+\|_{L^1(\Torus^2)}^3)+\frac{K}{2A}\norm{ (\nz -K)_+}_{L^2(\Torus^2)}^2\\
\leq& \frac{C}{A}\eta_K\|\na (\nz -K)_+\|_{L^2(\Torus^2)}^2+\frac{C}{A}\eta_K^3+\frac{K}{2A}\norm{ (\nz -K)_+}_{L^2(\Torus^2)}^2.
\end{align}
By choosing $K$ large enough in \eqref{Chc_eta_K}, we can make $\eta_K$ small enough such that the leading order term is absorbed by the dissipation. Next we estimate the $T_{2}$ term in \eqref{dt(n_0-K)+2} with the Gagliardo-Nirenberg inequality, H\"older inequality and Young inequality
\begin{align}\label{T_L2_2}
T_{2}\leq&\frac{1}{A}\|(\nz -K)_+\|_{L^4(\Torus^2)}^2\|A\pa_t \cz \|_{L^2(\Torus^2)}\\
\leq&\frac{C}{A}\left(\|\na (\nz -K)_+\|_{L^2(\Torus^2)}\|(\nz -K)_+\|_{L^2(\Torus^2)}+\|(\nz -K)_+\|_{L^1(\Torus^2)}^2\right)\|A\pa_t \cz \|_{L^2(\Torus^2)}\\
\leq& \frac{1}{8A}\|\na(\nz -K)_+\|_{L^2(\Torus^2)}^2+\frac{C}{A}\|(\nz -K)_+\|_{L^2(\Torus^2)}^2\|A\pa_t \cz \|_{L^2(\Torus^2)}^2+\frac{C}{A}\eta_K^2\|A\pa_t \cz \|_{L^2(\Torus^2)}.
\end{align}
For the $T_{3}$ and $T_{4}$ term in \eqref{dt(n_0-K)+2}, we apply the H\"older inequality and the Young's inequality to obtain that
\begin{align}
T_{3}+T_{4}\leq \frac{C(K)}{A}\|(\nz -K)_+\|_{L^2(\Torus)}^2+\frac{C(K)}{A}\|A\pa_t \cz \|_{L^2(\Torus^2)}^2+\frac{C(K)}{A}.\label{T_L2_34}
\end{align}
For the last term in \eqref{dt(n_0-K)+2}, we apply the Hypotheses \eqref{HypED_FM}, \eqref{Bound_Linf} to estimate it as follows
\begin{align}T_{5}\leq&\frac{1}{A}\|\na_Y   (\nz -K)_+\|_{L^2_Y}\|n_{\neq}\|_{L_{x,Y}^\infty}\|\na_Y   \cc_{\neq}\|_{L_{x,Y}^2} 
\leq\frac{1}{8A}\|\na_Y   (\nz -K)_+\|_{L^2_Y}^2+\frac{C}{A}\CC^4e^{-2\delta \frac{t}{A^{1/3}}}.\label{T_L2_5}
\end{align}
Combining the aforementioned estimates \eqref{T_L2_1}, \eqref{T_L2_2}, \eqref{T_L2_34},  \eqref{T_L2_5}, the equation \eqref{dt(n_0-K)+2}, and applying the Nash inequality on the dissipation term, we obtain that
\begin{align}\label{dt(n-K)+2_s2}
\frac{d}{dt}\int(\nz -K)_+^2dY
\leq& -\frac{1}{4AC \mf M_0^2}\left(\int(\nz -K)_+^2dY\right)^2+\frac{1}{A} {C}(K)(1+A^2\|\pa_t \cz  \|_{L_Y^2}^2)\norm{(\nz -K)_+}_{L_Y^2}^2\\
&+\frac{ {C}(K)}{A}(1+A^2\|\pa_t \cz \|_{L_Y^2}^2)+\frac{C}{A}\CC^4e^{-2\delta \frac{t}{A^{1/3} }}.
\end{align}
To simplify notations, we define $\dss \mf Y(t):=\int(\nz (t,Y)-K)_+^2dY$, 
\begin{align}
G_1(t):=&\frac{{C}}{A}\CC^4\int_0^t e^{-2\delta \frac{s}{A^{1/3} }}ds.
\end{align}
By choosing $A$ large enough compared to the constants in the hypotheses \eqref{Hypotheses}, we make $G_1(t)$ smaller than $1/2$, i.e., \begin{align}
G_1(t)\leq & \frac{{C}\CC^4 }{\delta A^{2/3}}\leq \frac{1}{2},\quad \forall t\in[0,T_\star]. 
\end{align}
Then the quantity $\mf Y$ fulfills the following relation
\begin{align}\label{dfn_f}
\frac{d}{dt}\mf Y(t)\leq -\frac{1}{AC_1(\mf M_0)} \mf Y^2(t)+\frac{1}{A}f(t) \mf Y(t)+\frac{1}{A}f(t)+G_1'(t),\quad f(t):={C}_2(K)(1+A^2\|\pa_t \cz \|_2^2).
\end{align}
Since $G_1(t)\leq 1$, we have that
\begin{align}
\frac{d}{dt}(\mf Y(t)-G_1(t))\leq -\frac{1}{AC_1(\mf M_0)} (\mf Y(t)-G_1(t))_+^2+\frac{1}{A}f(t) (\mf Y(t)-G_1(t))+\frac{3}{2A}f(t),\quad \forall t\in[0,T_\star]. 
\end{align}
If $\mf Y(t)\leq G_1(t)\leq 1$, then the $L^2$-bound of $(\nz -K)_+$ is straightforward. \myr{Without loss of generality, we  assume that $\mf Y(t)> G_1(t)$ on the whole time interval $[0,T_\star]$. The proof in the general case is similar. }\myc{\footnote{An explicit argument will be the following. Considering the set of time instances at which $\mf Y(t)> G_1(t)$. The set is open on $\rr$, so it is a countable  collection of disjoint open intervals $\cup_{i=1}^\infty(a_i,b_i)$. On these open intervals, we  apply the comparison argument here. }}
By comparison, we observe that the quantity $\mf Y(t)-G_1(t)$ is bounded above by the solution to the following differential equality 
\begin{align*}
\frac{d}{dt}\mf  Z(t)=&-\frac{1}{C_1(\mf M_0)A}\mf Z(t)^2+\frac{f(t)}{A}\mf Z(t)+\frac{2f(t)}{A},\,\quad
 \mf Z(0)=\max\{2, 2\|(\lan n_{\mathrm{in}}\ran-K)_+\|_2^2, C_1(\mf M_0) C_2(K)+1\}.
\end{align*}
Since $C_1(\mf M_0)C_{2}(K)$ is a stationary solution to the differential equation $\frac{d}{dt}{q}=-\frac{1}{AC_1}q^2+\frac{1}{A}{C}_2q$, by comparison, we have that $\mf Z(t)> {C_1}C_2 $. Recalling the definition \eqref{dfn_f}, we  get an upper bound for $\frac{d}{dt}\mf Z(t)$:
\begin{align*} 
\frac{d}{dt}\mf Z(t)\leq &-\frac{1}{AC_1} \mf Z(t)^2+\frac{1}{A}h(t)\mf Z(t),\quad h(t)=
f(t)\left(2+\frac {2}{C_1(\mf M_0)C_2(K)}\right)=C_3(\mf M_0,K)(1+A^2\|\pa_t \cz \|_2^2). 
\end{align*}
Solving this differential inequality, we have that by \eqref{PtcTmIntBnd}
\begin{align*}
\mf Z^{-1}(t)\geq &\mf Z^{-1}(0)\exp\left\{-\frac{1}{A}C_3t-C_3A\int_0^t \|\pa_t \cz \|_2^2ds\right\}\mathbf{1}_{t\leq \min\{T_\star,A\}}\\
 &+\frac{1}{C_1{C_3}} \exp\left\{-C_3A \int_0^t  \|\pa_t \cz \|_2^2ds\right\}\left(1-\exp\left\{-\frac{C_3}{A}t\right\}\right)\mathbf{1}_{t\geq \min\{T_\star,A\}}\\
 \geq &[C(\|\lan n_{\mathrm{in}}\ran\|_{L^2},C_1, C_2, C_3,C_{\pa_t \cc})]^{-1}>0,\quad \forall t\in[0,T_\star],
\end{align*} 
which yields the $L^2 $-bound for $\forall t\in [0,T_\star]$ as follows:
\begin{align*}
\|\nz (t)\|_2^2\leq& C(\|(\nz (t)-K)_+\|_2^2+\mf M_0 K)\leq C(\mf Z(t)+G_1(t)+\mf M_0K)\\
=: &C_{\nz;L^2}^2(\mf M_0,\| \lan n_{\mathrm{in}}\ran \|_2, E[ \lan n_{\mathrm{in}}\ran ,  \lan \cc_{\mathrm{in}}\ran ])<\infty.\quad 
\end{align*}
This concludes the proof of the lemma.
\end{proof}
\begin{lem} \label{lem:Linf_n_c}
Assume  the entropy and chemical gradient bound \eqref{EntrBound} and the bootstrap hypotheses \eqref{Hypotheses}. If the magnitude $A$ is chosen large enough compared to the thresholds in the bootstrapping hypotheses, then the following $L^\infty$-norm estimates hold
\begin{align}
\|n \|_{L_t^\infty([0, T_\star]; L^\infty)}\leq& C_{n ;L^\infty}(\mf M_0, C_{\lan n\ran;L^2},\|   n_{\mathrm{in}}  \|_{L^\infty})<\infty;\label{Linf_n}\\
\|\na_Y   \cz \|_{L_t^\infty([0,T_\star];L_Y^\infty)}\leq& C_{\na \cz ;L^\infty}(\mf M_0,\| \lan n_{\mathrm{in}}\ran \|_{L_Y^4},\| \na_Y    \lan \cc_{\mathrm{in}}\ran \|_{  L_Y^\infty},E[ \lan n_{\mathrm{in}}\ran ,  \lan \cc_{\mathrm{in}}\ran ])<\infty.\label{Linf_na_cz} 
\end{align}
\end{lem}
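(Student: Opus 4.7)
The plan is a three-step bootstrap: first upgrade the truncated $L^2$ estimate for $\nz$ obtained in the previous lemma to a truncated $L^4_Y$ estimate, then convert this to an $L^\infty$ bound on $\na_Y   \cz$ via Duhamel applied to the linear heat equation \eqref{ppPKS_c} satisfied by $\cz$, and finally close an $L^\infty$ bound on $\nz$ by a Moser iteration that exploits the favorable absorbing term generated by substituting the $\cz$-equation back into the $\nz$-equation.

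For the $L^4_Y$ bound on $\nz$, I would test \eqref{ppPKS_n} against $(\nz-K)_+^3$ with $K\ge\max\{1,\overline{n}\}$ chosen large enough that $\|(\nz-K)_+\|_{L^1_Y}\le \eta_K\ll 1$ (possible by the entropy bound \eqref{EntrBound}, exactly as in \eqref{Chc_eta_K}). Substituting $\de_Y\cz = A\pa_t\cz-(\nz-\overline{n})$ from \eqref{ppPKS_c} produces a favorable absorbing term $-\frac{3}{A}\int(\nz-K)_+^3\,\nz\,dY$, together with a transient piece of the form $\int(\nz-K)_+^3\,\pa_t\cz\,dY$ controlled as in the $L^2$ case by the time integrability \eqref{PtcTmIntBnd}. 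The remainder coupling $\frac{1}{A}\int \na_Y  (\nz-K)_+^3\cdot\lan n_\nq\na_Y   \cc_\nq\ran\,dY$ decays in $t$ by \eqref{HypED_FM}. Combining these with a Gagliardo--Nirenberg interpolation (using $\eta_K$ to absorb the worst dissipation-level contribution) and a Grönwall-type argument of the form \eqref{dt(n-K)+2_s2}--\eqref{dfn_f}, one obtains $\|\nz\|_{L_t^\infty([0,T_\star];L_Y^4)}\le C(\mf M_0,\|\lan n_{\mathrm{in}}\ran\|_{L_Y^4},E[\lan n_{\mathrm{in}}\ran,\lan\cc_{\mathrm{in}}\ran])$, provided $A$ is sufficiently large compared to the bootstrap constants.

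For \eqref{Linf_na_cz}, I would use the Duhamel representation
\[
\cz(t)=e^{t\de/A}\lan \cc_{\mathrm{in}}\ran+\frac{1}{A}\int_0^t e^{(t-s)\de/A}(\nz(s)-\overline{n})\,ds,
\]
apply $\na_Y$, and invoke the heat-kernel gradient estimate $\|\na_Y   e^{\tau\de}f\|_{L^\infty}\le C\tau^{-3/4}e^{-c\tau}\|f\|_{L^4_Y}$ valid on $\Torus^2$ for mean-zero $f$ (the exponential factor comes from the spectral gap on $\Torus^2$, and $\nz-\overline{n}$ is mean-zero by \eqref{Cllmss_cnsrv}--\eqref{Chem_cnsrv}). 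A change of variables $\tau=(t-s)/A$ makes the kernel integral uniformly bounded in $t$, yielding $\|\na_Y   \cz\|_{L_t^\infty L_Y^\infty}\le \|\na_Y  \lan\cc_{\mathrm{in}}\ran\|_{L_Y^\infty}+C\|\nz\|_{L^\infty_t L^4_Y}\le C_{\na\cz;L^\infty}$. For the $L^\infty$ bound on $\nz$, once $\na_Y   \cz\in L^\infty$ is known, equation \eqref{ppPKS_n} becomes a linear drift–diffusion equation for $\nz$ with bounded divergence-form drift; a truncated $L^p$-iteration à la Moser, testing against $(\nz-K)_+^{p-1}$ and again using $\de\cz=A\pa_t\cz-(\nz-\overline{n})$ to generate the absorbing $-\frac{p-1}{A}\int(\nz-K)_+^{p-1}\nz^2$ (after accounting for the $K$-dependent lower-order terms), yields a bound for every $p$ that is uniform in $p$ as $p\to\infty$. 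The remainder source $\frac{1}{A}\na\cdot\lan n_\nq\na\cc_\nq\ran$ is perturbative by \eqref{HypED_FM} and \eqref{Bd_Linf_nq}. Combining the resulting bound on $\nz$ with \eqref{Bd_Linf_nq} gives \eqref{Linf_n}.

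The main technical obstacle lies in Step 1: making the $L^4_Y$ truncated energy inequality close despite the presence of the factor $\|A\pa_t\cz\|_{L_Y^2}^2$, which is only $L^1_t$-integrable (via \eqref{PtcTmIntBnd}). This is handled exactly as in the $L^2$ case: this factor enters the Grönwall inequality as a time-dependent coefficient whose integral over $[0,T_\star]$ is finite and controlled by $C_{\pa_t \cc}$, and the threshold on $A$ is chosen large enough that the enhanced-dissipation decay of the remainder cross terms dominates. Once Step 1 is in place, Steps 2 and 3 are standard linear estimates.
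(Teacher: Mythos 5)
Your proposal is correct in outline but follows a genuinely different route from the paper. The paper never passes through an $L^4_Y$ estimate for $\nz$: it first bounds the full chemical gradient $\|\na\cc\|_{L^6(\Torus^3)}$ directly from the already-established $L^2$ bound \eqref{L2case} via the semigroup smoothing estimate \eqref{na_c_0_est} (with $d=2$, $q=2$, $p=6$) together with the bootstrap control of $\na\cc_\nq$, and then runs a Moser/Nash iteration on the \emph{full} three-dimensional density $n$ on $\Torus^3$, estimating the aggregation term by $\|\na(n^p)\|_2\|n^p\|_3\|\na\cc\|_6$ and summing the logarithms of the ratios $\|n\|_{2^j}/\|n\|_{2^{j-1}}$; the bound \eqref{Linf_na_cz} is then read off from \eqref{na_c_0_est} a posteriori using the $L^\infty$ bound on $\nz$. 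Your route --- truncated $L^4_Y$ estimate for $\nz$ \`a la Calvez--Corrias, then $L^4\to L^\infty$ gradient smoothing for $\cz$, then Moser for $\nz$ with bounded drift plus \eqref{Bd_Linf_nq} to reassemble $n=\nz+n_\nq$ --- is also viable (and is consistent with the stated dependence of $C_{\na\cz;L^\infty}$ on $\|\lan n_{\mathrm{in}}\ran\|_{L^4_Y}$), but it is longer: the $L^2\to L^6$ smoothing lets the paper skip your Step 1 entirely. One caveat on your Step 1: substituting $\de_Y\cz=A\pa_t\cz-(\nz-\overline{n})$ into the aggregation integral produces, besides the transient $\pa_t\cz$ piece (which is quartic in $(\nz-K)_+$, not cubic), the term $+\frac{c}{A}\int(\nz-K)_+^4(\nz-\overline{n})\,dY$ with a \emph{positive} sign; this is not a favorable absorbing term but precisely the dangerous quintic contribution that must be absorbed into the dissipation by Gagliardo--Nirenberg using the smallness of $\|(\nz-K)_+\|_{L^1}$, exactly as in the $L^2$ lemma. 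Since you invoke that absorption mechanism in the very next clause, the argument still closes, but the sign and the powers should be corrected.
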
 
\begin{proof}\myb{
Thanks to the hypothesis \eqref{HypED_FM}, the estimate \eqref{L2case}, and the chemical gradient estimate \eqref{na_c_0_est}, we have that 
\begin{align}\label{nac_L6est}
\|\na \cc\|_{L^6(\Torus^3)}\leq& \|\na \cc_\nq\|_{L^6(\Torus^3)}+C\|\na_Y \cz\|_{L^6(\Torus^2)}\\
\leq& CC_{\nz;L^2} +C\|\na_Y\lan \cc_{\text{in}}\ran\|_{L^6}+\frac{1}{A^{1/3}}{}C(\|n_{\text{in};\nq}\|_{H^M}+\|\cc_{\text{in};\nq}\|_{H^{M+1}}).
\end{align} 
We choose $p=2^j, \quad j\in \mathbb{N}$, and implement the energy estimate with suitably chosen Gagliardo-Nirenberg inequality as follows
\begin{align*}
\frac{1}{2p}\frac{d}{dt}\|n\|_{2p}^{2p}\leq&-\frac{2p-1}{Ap^2}\|\na(n^{p})\|_2^2+\frac{2p-1}{Ap}\|\na(n^p)\|_2\|n^p \na \cc\|_2\\
\leq&-\frac{2p-1}{Ap^2}\|\na(n^{p})\|_2^2+\frac{2p-1}{Ap}\|\na(n^p)\|_2\|n^p\|_3 \| \na \cc\|_{6}\\
\leq&-\frac{2p-1}{2Ap^2}\|\na(n^{p})\|_2^2+\frac{C }{A}\lf(\|\na(n^p)\|_2^{\frac{3}{2 }}\|n^p\|_2^{\frac{1}{2}}+\|\na(n^p)\|_2\|n^p\|_{1}\rg)\| \na \cc\|_6\\
\leq& -\frac{1}{CAp}\|\na(n^{p})\|_2^2+\frac{Cp^3}{A}\|n^p\|_2^2\|\na \cc\|_6^4+\frac{Cp}{A}\|n^p\|_1^2\|\na \cc\|_6^2.
\end{align*}
Recall the Nash inequality on $\Torus^3$
\begin{align}
\|f\|_{L^2}\leq \|\na f\|_{L^2}^{\frac{3}{5}} \|f\|_{L^1}^{\frac{2}{5}}+\|f\|_{L^1},
\end{align}
which implies that 
\begin{align}
-\|\na f\|_{L^2}^2\leq -\frac{1}{C}\|f\|_{L^2}^{\frac{10}{3}}\|f\|_{L^1}^{-\frac{4}{3}}+C\|f\|_{L^1}^2.
\end{align}
\myc{\footnote{(Check the identically zero case? If we have $\|f\|_{L^1}=0$, then $\|f\|_{L^p}\equiv0$. And in the ODE, we don't care about these points? In this approach, we care about the extremal large value case. Well, $\|n^p\|_1\geq c\|n\|_1^p=cM^p>0$?)}}
Combining the Nash inequality and the energy estimate, we obtain
\begin{align}
\frac{1}{2p}\frac{d}{dt}\|n^p\|_{2}^{2}\leq&-\frac{1}{CAp\|n^p\|_1^{4/3}}\lf(\|n^p\|_{2}^{\frac{10}{3}}-Cp^4\|n^p\|_2^{2}\|n^p\|_1^{\frac{4}{3}}\|\na \cc\|_6^4-Cp^2\|n^p\|_1^{\frac{10}{3}}\|\na\cc\|_6^2\rg)\\
\leq &-\frac{1}{CAp\|n^p\|_1^{4/3}}\lf(\|n^p\|_{2}^{\frac{10}{3}}-C p^{10}\|n^p\|_1^{\frac{10}{3}}(\|\na \cc\|_6^2+\|\na\cc\|_6^{10})\rg).
\end{align}
Direct ODE argument yields that for $p=2^{j-1}\geq 4,\quad j\in\mathbb{N}$,
\begin{align}
\sup_{t\in[0,T_\star]}\|n(t)\|_{2^j}^{2^j}\leq \max\lf\{ C p^{6}
\sup_{t\in[0,T_\star]}\|n(t)\|_{2^{j-1}}^{2^j}(1+\|\na \cc(t)\|_{6}^{6}), \|n_{\text{in}}\|_{2^j}^{2^j}\rg\}.
\end{align}
If the second argument in the maximum is used for infinitely many $j$'s, we automatically get \eqref{Linf_n}. So assume that $j_0$ is the largest number such that the first term is dominated by the initial data $\|n_{\text{in}}\|_{2^j}^{2^j}$. 
By taking the logarithm on both side, we have that 
\begin{align}
\log \frac {\sup_{t\in[0,T_\star]}\|n(t)\|_{2^j}}{\sup_{t\in[0,T_\star]}\|n(t)\|_{2^{j-1}}}\leq  2^{-j}\lf(\log C+6j\log 2 +\sup_{t\in[0,T_\star]}\log(1+\|\na \cc(t)\|_{6}^{6})\rg), \, j> j_0.
\end{align}
By recalling the $\na \cc$ estimate \eqref{nac_L6est} and summing in $j$, we obtain the result \eqref{Linf_n}. 
}
The $\na \cz$-estimate \eqref{Linf_na_cz} follows from the $L^\infty$ estimation of $\nz $ and the estimate \eqref{na_c_0_est}. 

\ifx 
By the same type of energy estimate as in the $L^2$-case, we apply the Nash  inequality, the $L^2$-bound \eqref{L2case}, and the boostrap hypotheses \eqref{Hypotheses}  to obtain that \footnote{\textbf{Check!!} (Here I copy from \cite{CalvezCorrias} equation (5.12) )}
\begin{align}
\frac{d}{dt}\|\nz \|_4^4\leq& -\frac{1}{12C_{N}A(C_{\nz ;L^2}^\star)^{4}}\|\nz \|_4^{8}+\frac{{C(\mf M_0, C_{\nz ;L^2}^\star)}}{A}(1+A^2\|\pa_t \cz  \|_2^2)\|\nz \|_4^4\\
&+\frac{ {C(\mf M_0,C_{\nz ;L^2}^\star)}}{A}(1+A^2\|\pa_t \cz  \|_2^2)+\frac{\myr{C(\CC)??}}{A}e^{-\frac{\delta t}{A^{1/3}}}.
\end{align}
Now we define 
\begin{align}
G_2(t):=\int_0^t\frac{\myr{C(\CC)??}}{A}e^{-\frac{\delta s}{A^{1/3}}}ds,\quad\forall t\in[0,T_\star].
\end{align}
By taking $A$ large enough, we see that $G_2(t)\leq 1$. Now apply similar ODE calculation as in the proof of previous lemma, we obtain that $L^4$ norm of the cell density $\nz $ is bounded on the time interval $[0,T_\star]$. 
\fi

\end{proof}
\ifx
\begin{lem}
Assume the entropy and chemical gradient bound \eqref{EntrBound}, and the bootstrapping hypotheses \eqref{Hypotheses}. If the magnitude $A$ is chosen large enough, then the following estimate on the $\dot H_Y^1 $ norm of $\nz $ holds
\begin{align}
\|\na_Y   \nz (t)\|_{L_Y^2}\leq C^\star_{\nz ;\dot H^1}(\CC)<\infty,\quad \forall t\in [0,T_\star].
\end{align}
\end{lem}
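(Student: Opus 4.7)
The plan is to derive an $\dot H^1_Y$ energy estimate on the $x$-averaged cell density equation \eqref{ppPKS_n}, in close analogy with the $L^2$ estimate \eqref{L2case}. Concretely, I would test \eqref{ppPKS_n} with $-\de_Y \nz$ and integrate by parts over $\Torus^2$, producing
\begin{align*}
\tfrac{1}{2}\tfrac{d}{dt}\|\na_Y \nz\|_2^2 + \tfrac{1}{A}\|\de_Y \nz\|_2^2
= \tfrac{1}{A}\!\int\!\na_Y\!\cdot\!(\nz \na_Y \cz)\,\de_Y \nz\, dY
+ \tfrac{1}{A}\!\int\!\na_Y\!\cdot\!\lan n_\nq \na_Y \cc_\nq\ran\,\de_Y\nz\, dY.
\end{align*}
The leading nonlinearity expands as $\nz \de_Y\cz + \na_Y\nz\cdot\na_Y\cz$, which I bound using the $L^\infty$ estimates \eqref{Linf_n}, \eqref{Linf_na_cz} of Lemma \ref{lem:Linf_n_c}; the factor $\|\de_Y\cz\|_2$ is handled pointwise in time through the identity $\de_Y\cz = A\pa_t\cz - (\nz - \overline{n})$ coming from \eqref{ppPKS_c}. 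The remainder forcing, together with one derivative, is controlled by $\CC^2 e^{-2\delta t/A^{1/3}}$ via the bootstrap hypothesis \eqref{HypED_FM} and a standard $L^\infty$ Sobolev embedding.

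Applying Young's inequality to absorb every factor of $\|\de_Y\nz\|_2$ into half of the dissipation, I would arrive at an inequality of the form
\begin{align*}
\tfrac{d}{dt}\|\na_Y\nz\|_2^2 + \tfrac{1}{2A}\|\de_Y\nz\|_2^2
\le \tfrac{f(t)}{A}\|\na_Y\nz\|_2^2 + \tfrac{f(t)}{A} + G'(t),
\end{align*}
with $f(t)=C_\CC(1+A^2\|\pa_t\cz\|_2^2)$ having bounded time integral thanks to \eqref{PtcTmIntBnd}, and with $G(t)\le C(\CC)A^{-2/3}$ taken small by choosing $A$ large enough. I would then convert this linear dissipation into a superlinear one via the interpolation
\begin{align*}
\|\de_Y\nz\|_2^2
\ge c\,\|\na_Y\nz\|_2^4/\|\nz-\overline{n}\|_2^2
\ge c\,\|\na_Y\nz\|_2^4/C_{\nz;L^2}^2,
\end{align*}
which follows from $\|\na_Y\nz\|_2^2 = -\int(\nz-\overline{n})\de_Y\nz\,dY$ together with the $L^2$ bound \eqref{L2case}.

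Setting $\mf Y(t):=\|\na_Y\nz(t)\|_2^2$, this yields a Riccati-type differential inequality identical in structure to \eqref{dt(n-K)+2_s2}:
\begin{align*}
\tfrac{d}{dt}\mf Y \le -\tfrac{1}{AC_1}\mf Y^2 + \tfrac{f(t)}{A}\mf Y + \tfrac{f(t)}{A} + G'(t).
\end{align*}
At this point I would simply recycle the ODE comparison argument used in the proof of the $L^2$ bound: subtracting $G(t)\le 1/2$, the remainder is dominated by the solution of an autonomous Riccati equation whose stationary level is $O(1)$, and time integration combined with $A\int_0^t\|\pa_t\cz\|_2^2 ds\le C_{\pa_t\cc}$ yields $\mf Y(t)\le C^\star_{\nz;\dot H^1}(\CC)$ for every $t\in[0,T_\star]$.

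The main obstacle is the $\|\de_Y\cz\|_2$ factor: via the $\cz$-equation it equals $A\pa_t\cz - \nz + \overline{n}$, and $A\|\pa_t\cz\|_2$ is only controlled in $L^2_t$, not in $L^\infty_t$. This precludes any naive linear Gronwall argument, but it matches exactly the structure that the superlinear Riccati dissipation absorbs, which is why the ODE-comparison mechanism of the previous lemma carries over. A minor technical point is verifying that testing with $\de_Y\nz$ is legal at the $H^M$, $M\ge 3$ regularity level; this is standard from the local well-posedness in Theorem \ref{thm:lcl_exst}, so no new difficulty arises there.
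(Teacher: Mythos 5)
Your proposal is correct and follows essentially the same route as the paper's own argument: an energy estimate for $\|\na_Y \nz\|_{2}^{2}$ in which $\de_Y\cz$ is replaced by $A\pa_t\cz-(\nz-\overline{n})$ via \eqref{ppPKS_c}, the remainder forcing is absorbed into a small time-integrated function using \eqref{HypED_FM}, the dissipation is converted to a superlinear (Riccati) term through interpolation against the $L^2$ bound \eqref{L2case}, and the ODE comparison of the $L^2$ lemma together with \eqref{PtcTmIntBnd} closes the estimate. The only (immaterial) difference is bookkeeping: the paper places the $A\|\pa_t\cz\|_2^2\|\nz\|_\infty^2$ contribution into the integrated function $G$, while you keep $A^2\|\pa_t\cz\|_2^2$ in the Gronwall coefficient $f(t)$, exactly as in the $L^2$ case.
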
 
\begin{rmk}
By choosing the constant $C_{\nz ;\dot H^1}$ in the bootstrapping hypothesis \eqref{Hyp_zero_mode_na_c_Linf} large enough compared to $C_{\na \cz }$ in the lemma, we have obtained the improvement \eqref{Con_zero_mode_na_c_Linf}. 
\end{rmk}
\begin{proof}
Now we estimate the $\|\na_Y   \nz \|_2$. Estimating the time evolution of $\|\na_Y   \nz \|_2$ using Gagliardo-Nirenberg-Sobolev inequality, Young's inequality, Minkowski inequality, Lemma \ref{Lem:nacest} and the time integral estimate of $\|\na^2 \cc_{\neq}\|_2^2$ \eqref{Hyp1_c}, we have that 
\begin{align}
\frac{1}{2}\frac{d}{dt}\|\na_Y   \nz \|_2^2
\leq &-\frac{\|\na_Y  ^2 \nz \|_2^2}{2A}+\frac{4\|\de_Y \cz \|_2^2\|\nz \|_\infty^2}{A}+\frac{4\|\na_Y   \cz \|_\infty^2\|\na_Y   \nz \|_2^2}{A}+\frac{4}{A}\|\na_Y  \cdot\lan\na_Y   \cc_{\neq}n_{\neq}\ran \|_2^2\nonumber\\
\leq&-\frac{\|\na_Y  ^2 \nz \|_2^2}{2A}+\frac{4(A\|\pa_t \cz \|_2+\|\nz -\overline{n}\|_2 )^2\|\nz \|_\infty^2}{A}+\frac{C(C_{n,\infty})\|\na_Y   \nz \|_2^{2}}{A}\nonumber\\
&+\frac{4}{A}\|\na_Y  ^2 \cc_{\neq}\|_{L_{x,Y}^2}^2\|n_{\neq}\|_{L_{x,Y}^\infty}^2+\frac{4}{A}\|\na_Y   \cc_{\neq}\|_{L_{x,Y}^\infty}^2\|\na_Y   n_{\neq}\|_{L_{x,Y}^2}^2\nonumber\\
\leq&-\frac{\|\na_Y  ^2 \nz \|_2^2}{2A}+\frac{8\|\nz -\overline{n}\|_2^2\|\nz \|_\infty^2}{A}+\frac{C(C_{n,\infty})\|\na_Y   \nz \|_2^{2}}{A}\nonumber\\
&+\left(12A\|\pa_t \cz \|_2^2\|\nz \|_\infty^2+\frac{4}{A}\|\na_Y  ^2 \cc_{\neq}\|_{L_{x,Y}^2}^2\|n_{\neq}\|_{L_{x,Y}^\infty}^2+\frac{4}{A}\|\na_Y   \cc_{\neq}\|_{L_{x,Y}^\infty}^2\|\na_Y   n_{\neq}\|_{L_{x,Y}^2}^2\right).\label{time evolution of na f L2}
\end{align}
We define the function 
\begin{equation}\label{G3}
G_3(t):=12A\int_0^t\|\pa_t \cz \|_{L_Y^2}^2\|\nz \|_{L_Y^\infty}^2ds+\frac{4}{A}\int_0^{t}\|\na_Y  ^2 \cc_{\neq}\|_{L_{x,Y}^2}^2\|n_{\neq}\|_{L_{x,Y}^\infty}^2+\|\na_Y   \cc_{\neq}\|_{L_{x,Y}^\infty}^2\|\na_Y  n_{\neq}\|_{L_{x,Y}^2}^2ds.
\end{equation}
Note that by the hypotheses \eqref{HypED_L2}, \eqref{Hyp}, \eqref{Hyp_n_Linf}, \eqref{Hyp_na_c_neq_Linf}, we have that
\begin{align}
\frac{4}{A}\int_0^{T_\star}\|\na_Y ^2 \cc_{\neq}\|_2^2\|n_{\neq}\|_{\infty}^2+\|\na_Y   \cc_{\neq}\|_\infty^2\|\na_Y  n_{\neq}\|_{2}^2ds\leq CC_{2,\infty}^4
\end{align}
for all $t\leq T_\star$. Moreover, from the time integral control \eqref{PtcTmIntBnd} and the hypothesis \eqref{Hyp_n_Linf}, we have that
\begin{align*}
A\int_0^{T_\star}\|\pa_t \cz \|_2^2\|\nz \|_\infty^2ds\leq C(C_{n,\infty},\mf M_0,E[ \lan n_{\mathrm{in}}\ran , \lan \cc_{\mathrm{in}}\ran ])\leq C C_{2,\infty}^4.
\end{align*}
Now, we have that the function $G_3$ \eqref{G3} is bounded as follows
\begin{align}
G_3(t)\leq CC_{2,\infty}^4 , \quad \forall t\in[0,T_\star].
\end{align} 
Now we apply Gagliardo-Nirenberg 
inequality, Lemma \ref{Lem:nacest} and definition of $G_3(t)$ \eqref{G3} to rewrite the inequality \eqref{time evolution of na f L2} as follows:
\begin{align}
\frac{d}{dt}\left(\|\na_Y   \nz \|_2^2-G_3(t)\right)\leq &\frac{1}{A}\bigg(-\frac{(\|\na_Y   \nz \|_2^2-G_3)_+^2}{4CC_{\nz ,L^2}^2}
+C(C_{n,\infty})(\|\na_Y   \nz \|_2^2-G_3)+ C(C_{n;\infty})G_3+CC_{2,\infty}^{4}\bigg).\label{f 0 H1 0}
\end{align}
Now because $G_3(t)\leq CC_{2,\infty}^4$, by a similar  ODE argument as in the proof in the $L^2$-case \eqref{L2case}, we obtain that 
\begin{align}\label{f 0 H1}
\|\na_Y   \nz \|_2\leq C^\star_{\nz ,\dot H^1} (C_{2,\infty}).
\end{align}
This concludes the proof of the lemma.
\end{proof}\fi
Now we present the following lemma that provides estimates for the higher Sobolev norms. 
\begin{lem} \label{lem:HM_n_c}
Assume  the entropy and chemical gradient bound \eqref{EntrBound} and the bootstrap hypotheses \eqref{Hypotheses}. If the magnitude $A$ is chosen large enough compared to the constants in the bootstrapping hypothses, then the following estimates hold
\begin{align}
\|\lan n\ran \|_{L_t^\infty([0, T_\star]; H_Y^M)}\leq& C_{\lan n\ran ;H^M}(\mf M_0, C _{\lan n\ran;L^2},\|   \nz_{\mathrm{in}}  \|_{H_Y^M},\|   \na \cz_{\mathrm{in}}  \|_{H_Y^M})<\infty;\label{H_M_n}\\
\|\na_Y   \cz \|_{L_t^\infty([0,T_\star];H_Y^M)}\leq& C_{\na \cz ;H^M}(C_{\nz ;H^M},\| \na_Y    \lan \cc_{\mathrm{in}}\ran \|_{  H_Y^M})<\infty.\label{H_M_na_cz} 
\end{align}
\end{lem}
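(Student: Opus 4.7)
The plan is to bootstrap from the $L^\infty$ bounds \eqref{Linf_n}--\eqref{Linf_na_cz} established in Lemma \ref{lem:Linf_n_c} up to $H^M$ estimates by performing derivative-level energy estimates on the coupled system \eqref{ppPKS_0md}, and closing the argument with an ODE comparison of the same type used in the $L^2$ step \eqref{L2case}. I would proceed inductively on the order of derivatives $m=1,2,\ldots,M$, assuming at stage $m$ that the estimates \eqref{H_M_n}--\eqref{H_M_na_cz} are already known with $M$ replaced by $m-1$.

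First, I would deal with $\cz$. Testing the equation \eqref{ppPKS_c} with $\pa^\alpha \cz$ for multi-indices $|\alpha|\leq M+1$ yields
\begin{align}
\frac{1}{2}\frac{d}{dt}\|\pa^\alpha \cz\|_{L^2_Y}^2 + \frac{1}{A}\|\na_Y \pa^\alpha \cz\|_{L^2_Y}^2 \leq \frac{1}{A}\|\pa^\alpha(\nz-\overline{n})\|_{L^2_Y}\|\pa^\alpha \cz\|_{L^2_Y}.
\end{align}
Summing over $|\alpha|\leq M$ after applying the gradient and using Poincar\'e (recall $\overline{\cz}=0$ by \eqref{Chem_cnsrv}), we obtain a linear-in-$\nz$ differential inequality which, upon Gronwall in the dissipation-dominated regime, gives
\begin{align}
\|\na_Y \cz\|_{H_Y^M}^2 \leq C\|\na_Y\lan\cc_{\mathrm{in}}\ran\|_{H^M_Y}^2 + C\|\nz\|_{L_t^\infty H^M_Y}^2.
\end{align}
Hence \eqref{H_M_na_cz} reduces to \eqref{H_M_n}.

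Next, for $\nz$, I would carry out the standard $H^M$ energy estimate on \eqref{ppPKS_n}. For each multi-index $|\alpha|\leq M$,
\begin{align}
\frac{1}{2}\frac{d}{dt}\|\pa^\alpha \nz\|_{L^2_Y}^2 + \frac{1}{A}\|\na_Y\pa^\alpha \nz\|_{L^2_Y}^2 = -\frac{1}{A}\int \pa^\alpha \nz\, \pa^\alpha \na_Y\cdot(\nz \na_Y \cz)\,dY - \frac{1}{A}\int \pa^\alpha \nz\, \pa^\alpha\na_Y\cdot\lan n_\nq \na_Y \cc_\nq\ran\,dY.
\end{align}
The aggregation term is controlled via the Leibniz rule, the induction hypothesis at level $m-1$, the $L^\infty$ bounds \eqref{Linf_n}--\eqref{Linf_na_cz}, and Gagliardo--Nirenberg, distributing derivatives so that the top-order piece $\nz \de_Y \pa^\alpha \cz$ is converted by integration by parts and the $\cz$-equation into a term of the shape $\|\nz\|_\infty \|\pa^\alpha \nz\|_2^2 + A\|\nz\|_\infty^2 \|\pa_t \pa^\alpha \cz\|_2^2$, in direct analogy with the $L^2$ argument. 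The source term is bounded using the enhanced-dissipation hypothesis \eqref{HypED_FM}: by Cauchy--Schwarz and Sobolev embedding,
\begin{align}
\frac{1}{A}\left|\int \pa^\alpha \nz\, \pa^\alpha\na_Y\cdot\lan n_\nq \na_Y \cc_\nq\ran\,dY\right| \leq \frac{1}{2A}\|\na_Y \pa^\alpha \nz\|_2^2 + \frac{C}{A}F_M[n_\nq,\cc_\nq](t),
\end{align}
and the right-hand contribution integrates in time to $O(A^{-2/3})$, which can be absorbed by choosing $A$ large.

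Finally, I would sum over $|\alpha|\leq M$, define an auxiliary function
\begin{align}
G_M(t) := \frac{C}{A}\int_0^t F_M[n_\nq,\cc_\nq](s)\,ds + A\int_0^t \|\pa_t \cz\|_{H^M}^2 \|\nz\|_\infty^2\,ds
\end{align}
absorbing all exponentially decaying and time-integrable forcing, note that $G_M(t)\leq C(\mathfrak B_{2,\infty})$ on $[0,T_\star]$ by combining \eqref{HypED_FM}, \eqref{PtcTmIntBnd}, and \eqref{Linf_n}, and run the same Nash-inequality/ODE comparison argument as in the proof of \eqref{L2case} on the quantity $\|\nz\|_{H^M}^2 - G_M(t)$. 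This yields the bound \eqref{H_M_n}, which then feeds back into \eqref{H_M_na_cz}.

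The main technical obstacle is the top-order coupling in the aggregation term $\pa^\alpha \na_Y\cdot(\nz\na_Y\cz)$: the piece in which all derivatives fall on $\cz$ cannot be directly controlled by Sobolev embedding of $\na_Y\cz$ in $H^{M-1}$ when $M$ is only $3$, so one must exploit the $\cz$-equation to trade $\de_Y \cz$ for $A\pa_t \cz - (\nz-\overline{n})$ and use the time-integrated control \eqref{PtcTmIntBnd} on $\pa_t \cz$, mirroring the device used in the $L^2$ argument. Once this trade is executed at every derivative level via the induction, the remaining terms are perturbative, and the conclusion follows from the comparison ODE.
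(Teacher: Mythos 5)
Your first step (reducing \eqref{H_M_na_cz} to \eqref{H_M_n} by an energy/Poincar\'e estimate on the $\cz$-equation) is sound once the forcing term is integrated by parts so that at most $M$ derivatives fall on $\nz$; it is an acceptable substitute for the route the paper takes, which obtains $\|\na_Y\cz\|_{H^m}\lesssim \|\na_Y\lan\cc_{\mathrm{in}}\ran\|_{H^m}+\sup_{s\le t}\|\nz(s)\|_{H^m}$ from the heat-semigroup estimate \eqref{na_c_0_est}. The genuine gap is in your treatment of the top-order aggregation term. You propose to trade $\de_Y\pa^\alpha\cz$ for $A\pa_t\pa^\alpha\cz-\pa^\alpha(\nz-\overline n)$ and then absorb the resulting contribution into an auxiliary function $G_M$ containing $A\int_0^t\|\pa_t\cz\|_{H^M}^2\|\nz\|_\infty^2\,ds$, claiming it is bounded ``by combining \eqref{HypED_FM}, \eqref{PtcTmIntBnd}, and \eqref{Linf_n}.'' But \eqref{PtcTmIntBnd} is a free-energy dissipation bound and controls only $A\int_0^{T_\star}\|\pa_t\cz\|_{L_Y^2}^2\,dt$; there is no $H^M$-in-space analogue available in the bootstrap framework, and attempting to derive one (e.g.\ by testing the differentiated $\cz$-equation with $\pa_t\pa^\alpha\cz$) produces a term of the form $\frac1A\int_0^{T_\star}\|\pa^\alpha\nz\|_2^2\,ds$, which is not uniformly bounded on an arbitrarily long time horizon unless one already has the $H^m$ bound on $\nz$ being proved — the argument is circular. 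So as written $G_M$ is not controlled and the ODE comparison does not close.

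The paper avoids this entirely: in the $\dot H^m$ estimate it first integrates by parts so that at most $m+1$ derivatives ever land on $\cz$, bounds the resulting $\frac{C}{A}\|\nz\|_\infty^2\|\na\cz\|_{H^m}^2$ by $\frac{C}{A}C_{n;L^\infty}^2\sup_{s\le T_\star}\|\nz(s)\|_{H^m}^2$ via \eqref{na_c_0_est}, handles the remainder source with \eqref{HypED_FM}, and then closes the resulting differential inequality — which has a sup-in-time term on the right — by a barrier/contradiction argument exploiting the superlinear Gagliardo--Nirenberg dissipation $-\|\nz\|_{\dot H^m}^{(2m+2)/m}/\|\nz\|_{L^2}^{2/m}$, inducting on $m$. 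In other words, the $\pa_t\cz$ substitution is the right device only at the $L^2$ level of \eqref{L2case}; at higher orders you should instead feed your own first-step bound $\|\na_Y\cz\|_{H^m}\lesssim \|\na_Y\lan\cc_{\mathrm{in}}\ran\|_{H^m}+\sup_s\|\nz\|_{H^m}$ directly into the aggregation term and close with the barrier argument, which essentially reproduces the paper's proof.
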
 \begin{proof}
The $\na \cz$-estimate \eqref{H_M_na_cz} is a direct consequence of the estimate \eqref{H_M_n} and the estimate \eqref{na_c_0_est}.

We prove the estimate \eqref{H_M_n} through induction. Assume that we obtain the following bound for $m-1,\quad  1\leq m\leq M,\, M\geq 3$:
\begin{align}
\|\nz\|_{L_t^\infty([0, T_\star];H^{m-1})}\leq C_{\nz; H^{m-1}}(C_{\lan n\ran; L^2},\|n_{\text{in}}\|_{H^{m-1}}).\label{Indc_hypo}
\end{align}
We would like to prove that the same estimate holds with `$m-1$'
 replaced by `$m$'. Direct energy estimate of the $\|\cdot\|_{\dot H^m}$-norm yields that
\begin{align}\label{tmevl_Hm_nz}\\
\frac{d}{dt}\frac{1}{2}\sum_{j+k=m}\|\pa_{y}^j\pa_{z}^k\nz\|_{L^2}^2=&-\frac{1}{A}\sum_{j+k=m}\|\na \pa_{y}^j\pa_{z}^k \nz\|_{L^2}^2+\frac{1}{A}\sum_{j+k=m}\int \na_{y,z}\pa_{y}^j\pa_{z}^k\nz \cdot \pa_y^j\pa_z^k( \nz\na\cz)dV\\
&+\frac{1}{A}\sum_{j+k=m}\int \na_{y,z}\pa_y^j\pa_z^k \nz\cdot \pa_y^j\pa_z^k\lan n_\nq \na \cc_{\nq} \ran dV\\
=:&-\frac{1}{A}\sum_{j+k=m}\|\na \pa_{y}^j\pa_{z}^k \nz\|_{L^2}^2+T_{1}+T_{2}.
\end{align}
To estimate $T_{1}$ term, we apply the Sobolev product estimate, the $L^\infty$-bounds \eqref{Linf_n}, \eqref{Linf_na_cz},  and the chemical gradient bound \eqref{na_c_0_est}  
{
\begin{align}\label{T_0_est}
|T_{1}|\leq &\frac{1}{4A}\sum_{j+k=m}\|\na\pa_y^j\pa_z^k \nz\|_{L_Y^2}^2+\frac{C}{A}\|\na \cz\|_{H_{Y}^{m}}^2\|\lan n\ran \|_{L^\infty_{Y}}^2+\frac{C}{A}\|\na \cz\|_{L_Y^\infty}^2\|\nz\|_{H_{Y}^m}^2\\
\leq&\frac{1}{4A}\sum_{j+k=m}\|\na\pa_y^j\pa_z^k \nz\|_{L^2_Y}^2+\frac{C}{A}\sup_{0\leq s\leq T_\star}\|\lan n\ran(s)\|_{H_Y^m}^2C_{ n;L^\infty}^2+\frac{C}{A}C_{\na \cz;L^\infty}^2\|\nz\|_{H_Y^m}^2.
\end{align}}
To estimate the $T_{2}$ term in \eqref{tmevl_Hm_nz}, we invoke the product estimate of Sobolev functions, bootstrap hypothesis (\ref{HypED_FM}) and \eqref{Bound_Linf} to estimate
\begin{align}\label{T_nq_est}
|T_{2}|=&\bigg|\frac{1}{A}\sum_{j+k=m}\int \na_{y,z}\pa_y^j\pa_z^k \nz\cdot \lan \pa_y^j\pa_z^k(\na  \cc_{\nq}n_\nq)\ran dV\bigg|\\
\leq &\frac{1}{8A}\sum_{j+k=m}\|\na\pa_y^j\pa_z^k \nz\|_{L_Y^2}^2+\frac{C}{A}\|\na \cc_\nq\|_{H^m(\Torus^3)}^2\|n_\nq\|_{L^\infty(\Torus^3)}^2\myr{+\frac{C}{A}\|n_\nq\|_{H^m(\Torus^3)}^2\|\na \cc_\nq\|_{L^\infty(\Torus^3)}^2}\\
\leq&\frac{1}{8A}\sum_{j+k=m}\|\na\pa_y^j\pa_z^k \nz\|_{L_Y^2}^2+\frac{C}{A}\CC^4 e^{-\delta t/A^{1/3}}. 
\end{align} \myc{\footnote{Check whether we have lower order terms in the product rule.}}
Combining the estimates \eqref{T_0_est}, \eqref{T_nq_est}, we apply the Gagliardo-Nirenberg inequality to obtain that 
\begin{align*}
\frac{d}{dt}\|\lan n\ran\|_{\dot H^m}^2\leq& -\frac{1}{AC}\frac{\|\nz \|_{\dot H^m}^{\frac{2m+2}{m}}}{\|\nz-\overline{n}\|_{L^2}^{\frac{2}{m}}}+\frac{C}{A}\sup_{0\leq s\leq T_\star}\|\lan n\ran\|_{H^m}^2C_{ n;L^\infty}^2+\frac{C}{A}C_{\na \cz;L^\infty}^2\|\nz\|_{H^m}^2+\frac{C}{A}\CC^4 e^{-\delta t/A^{1/3}}\\
\leq&-\frac{1}{AC}\frac{\|\nz \|_{\dot H^m}^{\frac{2m+2}{m}}}{C_{\nz;L^2}^{\frac{2}{m}}}+\frac{C}{A}\sup_{0\leq s\leq T_\star}\|\lan n\ran\|_{H^m}^2C_{ n;L^\infty}^2+\frac{C}{A}C_{\na \cz;L^\infty}^2\|\nz\|_{H^m}^2+\frac{C}{A}\CC^4 e^{-\delta t/A^{1/3}}.
\end{align*}
\myr{We further choose $A$ large enough and define 
\begin{align}
G_2(t)=\int_0^t\frac{C}{A}\CC^4 e^{-\delta s/A^{1/3}}ds\leq \frac{C}{\delta A^{2/3}}\CC^4\leq 1.
\end{align}
The equation above, together with the induction hypothesis \eqref{Indc_hypo},  yields that there exist constants $C_4,\ C_5$ such that  
\begin{align*}
\frac{d}{dt}\lf(\|\lan n\ran\|_{\dot H^m}^2-G_2\rg)\leq&-\frac{1}{AC}\frac{ \|\nz \|_{\dot H^m}^{\frac{2m+2}{m}}}{C_{\nz;L^2}^{\frac{2}{m}}}+\frac{C_4(C_{ n;L^\infty},C_{\na \cz;L^\infty})}{A}\sup_{0\leq s\leq T_\star}\|\lan n\ran\|_{\dot H^m}^2\\
&+\frac{C_5(C_{n;L^\infty}, C_{\na \cz;L^\infty}, C_{\lan n\ran;H^{m-1}})}{A}.
\end{align*}
We assume that the quantity $\|\nz\|_{\dot H^m}^2-G_2$ reaches the following threshold at the first instance $t_\ast$,
\begin{align}
\lf(\|\nz\|_{\dot H^m}^2-G_2\rg)\big|_{t=t_\star}=C\lf(1+\|\lan n_{\text{in}}\ran\|_{H^m}^2+C^{ {2} }_{\nz;L^2}C_4^{ {m} }+C_{\nz;L^2}^{\frac{2}{m+1}}C_5^{\frac{m}{m+1}}\rg).
\end{align}
Direct analysis yields that $\frac{d}{dt}(\|\nz\|_{\dot H^m}^2-G_2)\big|_{t=t_\star}\geq0.$ However, 
one can check that if the universal constant $C$ is chosen to be large enough, the right hand side of the differential inequality is strictly less than zero, and hence we have a contradiction.  
}
Hence, 
\begin{align}
\sup_{t\in[0,T_\star)}(\|\nz\|_{\dot H^m}^2-G_2) \leq C(C_{\lan n\ran;L^2}, C_{\lan n\ran;H^{m-1}}).
\end{align}This, when combined with the bound $G_2\leq 1,$ yields  \eqref{Indc_hypo} with $m-1$ replaced by  $m$. This concludes the induction.

\ifx{\color{red}Previous:

\begin{align}
|T_{00}|\leq &\frac{1}{4A}\sum_{j+k=M}\|\na\pa_y^j\pa_z^k \nz\|_2^2+\frac{C}{A}\|\na \cz\|_{W_{y,z}^{M,4}}^2\|\lan n\ran \|_{L^4_{y,z}}^2\\
&+\frac{C}{A}\|\na \cz\|_{L_{y,z}^\infty}^2\|\nz\|_{H_{y,z}^M}^2\\
\leq&\frac{1}{4A}\sum_{j+k=M}\|\na\pa_y^j\pa_z^k \nz\|_2^2+\frac{C}{A}\sup_{0\leq s\leq T_\star}\|\lan n\ran\|_{H^M}^2C_{\lan n\ran;L^4}^2+\frac{C}{A}(C^\star_{\na \cz;L^\infty})^2\|\nz\|_{H^M}^2.\label{T_00_est}
\end{align}

\footnote{ Application of the Gagliardo-Nirenberg inequality and the ODE argument yields the result (Add more detail?).}
Now we have that the first term can be absorbed by the dissipation term and the second term has small contribution in the time integral sense.
}\fi
\end{proof} 
\begin{proof}[Proof of \eqref{Con0_n_HM}, \eqref{Con0nac_HM+1}] By picking the constants 
\begin{align}\label{chc_B_avg}\mf B_{\nz ,H^M}\geq 2C_{\nz;H^M}, \mf B_{\nabla \cz; H^M}\geq 2 C_{\na\cz;H^M},
\end{align} we have proven the estimates \eqref{Con0_n_HM} and \eqref{Con0nac_HM+1}. 
\end{proof}
\section{Nonlinear Enhanced Dissipation of the  Remainder} \label{Sec:ED}
   
In this section, we prove the estimate for the enhanced dissipation functional $F_M$ \eqref{F_M_sect_2}, i.e.,   \eqref{ConED_FM}. We divide the proof into several steps. 

\noindent
\textbf{Step \# 1: Preliminaries and Goals.} 
Fix an arbitrary time $t_\star\geq 0$, and we would like to show that on the interval $[t_\star,t_\star+\delta^{-1}A^{1/3}]$, the following two estimates hold:
\begin{align}
F_{M}[t_\star+\tau]\leq &2F_M[t_\star],\quad \forall \tau\in[0,\delta^{-1}A^{1/3}];\label{F_M_Reg_est}\\
F_M[t_\star+\delta^{-1}A^{1/3}]\leq&\frac{1}{\myr{e^2}}F_{M}[t_\star].\label{FM_Decay_est}
\end{align}
Here $\delta$ is a constant depending only on the $\delta_0, \, C_\ast$ in the definition \eqref{ED1/3}. The explicit choice of $\delta$ is provided in \eqref{Chc_delta}. Once the two estimates are obtained, a classical argument yields the result \eqref{ConED_FM}. 

To make the presentation in the next two steps smoother, we provide some technical lemmas. Some of the proofs of these lemmas will be postponed to the end of the section. The first lemma provides estimates for the commutator terms. 
\begin{lem}\label{Lem:comm}Assume that $\mathfrak{H}_{ijk}\in H^1$. Then the following estimate holds for any $0\leq m\leq M+1$,
\begin{align}
\sum_{|i,j,k|=m}A^{i}\lf|\int  \mathfrak{H}_{ijk} [\pa_y^j, u]\pa_x^{i+1}\pa_z^k f dV\rg|\leq &\frac{C}{A^{1/2}}\lf(\sum_{|i,j,k|=m}A^i \|\mf H_{ijk}\|_2^2+\sum_{|i,j,k|\leq m}A^i \|\pa_x^i\pa_y^j\pa_z^k f\|_2^2\rg).
\end{align}
Here the constant $C$ depends only on $M, \, \| u\|_{L_t^\infty W^{m,\infty}}. $

\end{lem}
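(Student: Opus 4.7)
The plan is to directly expand the commutator and then apply a weighted Cauchy--Schwarz/AM--GM splitting that is tuned to the powers of $A$ appearing on each side. Since $u=u(t,y)$ depends only on $y$, the derivatives $\partial_x$ and $\partial_z$ commute with $u$, so the commutator acts only through the $\partial_y^j$. Leibniz's rule gives
\begin{align}
[\partial_y^j,u]\,\partial_x^{i+1}\partial_z^k f=\sum_{l=1}^{j}\binom{j}{l}(\partial_y^l u)\,\partial_x^{i+1}\partial_y^{j-l}\partial_z^k f.
\end{align}
Because $u\in L^\infty_t W^{M+3,\infty}_y$, every $\|\partial_y^l u\|_{L^\infty}$ with $l\le M+2$ is bounded by a constant absorbed into $C$, so one is reduced to controlling a finite sum of pairings $\int \mathfrak{H}_{ijk}(\partial_y^l u)\,\partial_x^{i+1}\partial_y^{j-l}\partial_z^k f\,dV$.

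Next I would pull $\partial_y^l u$ out in $L^\infty$ and apply Cauchy--Schwarz to each pairing, yielding a bound of the form $C\|\mathfrak H_{ijk}\|_2\,\|\partial_x^{i+1}\partial_y^{j-l}\partial_z^k f\|_2$. The multi-index $(i+1,j-l,k)$ satisfies $|i+1,j-l,k|=m+1-l\le m$ for every $l\ge 1$, which is exactly the range permitted on the right-hand side. The whole point is then to distribute the weight $A^i$ from the left-hand side to match the weights $A^i$ on $\|\mathfrak H_{ijk}\|_2^2$ and $A^{i+1}$ on $\|\partial_x^{i+1}\partial_y^{j-l}\partial_z^k f\|_2^2$ that appear on the right-hand side; the extra factor of $A^{-1/2}$ will come out of this bookkeeping.

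The key algebraic identity driving the estimate is
\begin{align}
A^i\|\mathfrak H_{ijk}\|_2\|\partial_x^{i+1}\partial_y^{j-l}\partial_z^k f\|_2 = A^{-1/2}\bigl(A^{i/2}\|\mathfrak H_{ijk}\|_2\bigr)\bigl(A^{(i+1)/2}\|\partial_x^{i+1}\partial_y^{j-l}\partial_z^k f\|_2\bigr),
\end{align}
since $i/2+(i+1)/2-1/2=i$. Applying $ab\le\tfrac12(a^2+b^2)$ to the factors in parentheses yields
\begin{align}
A^i\|\mathfrak H_{ijk}\|_2\|\partial_x^{i+1}\partial_y^{j-l}\partial_z^k f\|_2 \le \frac{1}{2A^{1/2}}\Bigl(A^i\|\mathfrak H_{ijk}\|_2^2+A^{i+1}\|\partial_x^{i+1}\partial_y^{j-l}\partial_z^k f\|_2^2\Bigr),
\end{align}
and since $|i+1,j-l,k|\le m$, the second term is majorized by $\sum_{|i',j',k'|\le m}A^{i'}\|\partial_x^{i'}\partial_y^{j'}\partial_z^{k'}f\|_2^2$. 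Summing over $|i,j,k|=m$ and $1\le l\le j$ and absorbing the combinatorial factors into $C=C(M,\|u\|_{L^\infty_t W^{m,\infty}})$ produces the claimed inequality.

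I do not expect a genuine obstacle here: the only thing to watch is that $\partial_y^l u$ only requires $W^{m,\infty}$ regularity of $u$ (with $l\le m\le M+1$), which is consistent with the hypothesis $u\in L^\infty_t W^{M+3,\infty}_y$, and that the $l=0$ term is excluded from the commutator so $l\ge 1$ guarantees $|i+1,j-l,k|\le m$ as needed. The only conceptual point is the weight balancing above, which is what produces the $A^{-1/2}$ gain quantifying the smallness of the commutator relative to the functional $F_M$.
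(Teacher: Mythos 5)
Your proof is correct and follows essentially the same route as the paper: Leibniz expansion of the commutator (your index $l$ of derivatives on $u$ corresponds to the paper's $\ell=j-l$ derivatives left on $f$), an $L^\infty$ bound on $\partial_y^l u$, and the weight splitting $A^i=A^{-1/2}\,A^{i/2}\,A^{(i+1)/2}$ followed by Young's inequality, with the observation that $|i+1,j-l,k|\le m$ since $l\ge 1$. No gaps.
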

\begin{proof}
We estimate the commutator as follows
\begin{align*}
&\sum_{|i,j,k|=m}A^{i} \lf|\int  \mathfrak{H}_{ijk} [\pa_y^j, u]\pa_x^{i+1}\pa_z^k f dV\rg|
= \sum_{|i,j,k|=m}A^i \lf|\sum_{\ell=0}^{j-1}\binom{ j}{ \ell}  \int \mf H_{ijk}\  \pa_y^{j-\ell}u\ \pa_x^{i+1}\pa_y^{\ell}\pa_z^k fdV\rg|\\
&\leq\frac{C}{A^{  1/2}}\sum_{|i,j,k|=m}\sum_{\ell=0}^{j -1} A ^{\frac{i}{2}}\|\mf H_{ijk}\|_2 \  A ^{\frac{i+1}{2}}\|\pa_x^{i+1}\pa_y^\ell\pa_z^k f\|_2\\&
\leq \frac{C}{A^{1/2} }\lf(\sum_{|i,j,k|=m}A^i \|\mf H_{ijk}\|_2^2+\sum_{|i,j,k|\leq m}A^i \|\pa_x^i\pa_y^j\pa_z^k f\|_2^2\rg).
\end{align*}
\end{proof}
The second lemma concerns the estimates of the linear terms in the $\cc_\nq$ equation.
\begin{lem}\label{Lem:c_eq}Assume that $\mathfrak{H}_{ijk}\in H^1$. Then the following estimate holds
\begin{align}\label{T_c_est}\\
&\lf|\frac{1}{A}\sum_{|i,j,k|\leq M+1}A^{i} \int \mf H_{ijk}\  \pal n_\nq dV\rg| \leq  \frac{1}{4A}\sum_{|i,j,k|\leq M+1}A^{i} \|\na \mf H_{ijk}\|_2^2+\frac{C}{A^{\frac{1}{2}}}\sum_{|i,j,k|\leq M}A^{i+\frac {1}{2}}\|\pal n_\nq\|_2^2.
\end{align}
\end{lem}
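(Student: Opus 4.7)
The plan is to estimate each summand on the left-hand side individually and then sum, splitting the multi-indices $(i,j,k)$ into two regimes according to whether $|i,j,k| \leq M$ or $|i,j,k| = M+1$. In either regime, the strategy is to produce a gradient on $\mathfrak{H}_{ijk}$ via one of two mechanisms, then close with Cauchy--Schwarz and Young's inequality, tuning the Young parameter so that the coefficient on $\|\nabla \mathfrak{H}_{ijk}\|_2^2$ is at most $\tfrac{1}{4}A^{i-1}$.

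For the low-order terms $|i,j,k|\leq M$, the key observation is that $n_\nq$ has zero $x$-average, and consequently so does $\pa_x^i\pa_y^j\pa_z^k n_\nq$ (derivatives commute with the $x$-averaging operator, and $x$-derivatives of mean-zero functions remain mean-zero). Hence the spatial mean $\lan \mathfrak{H}_{ijk}\ran(y,z)$ contributes nothing to $\int \mathfrak{H}_{ijk}\,\pa_x^i\pa_y^j\pa_z^k n_\nq\,dV$, so the integrand may be replaced by $(\mathfrak{H}_{ijk})_\nq \,\pa_x^i\pa_y^j\pa_z^k n_\nq$. Poincar\'e's inequality in $x$ on $\Torus$ then gives $\|(\mathfrak{H}_{ijk})_\nq\|_{L^2}\leq C\|\pa_x\mathfrak{H}_{ijk}\|_{L^2}\leq C\|\nabla\mathfrak{H}_{ijk}\|_{L^2}$, after which Cauchy--Schwarz and Young's inequality yield
\[
A^{i-1}\Big|\!\int \mathfrak{H}_{ijk}\pa_x^i\pa_y^j\pa_z^k n_\nq\,dV\Big|\leq \tfrac{1}{4}A^{i-1}\|\nabla\mathfrak{H}_{ijk}\|_2^2+C\,A^{i-1}\|\pa_x^i\pa_y^j\pa_z^k n_\nq\|_2^2,
\]
and the second summand is controlled by $\frac{C}{A^{1/2}}A^{i+1/2}\|\pa_x^i\pa_y^j\pa_z^k n_\nq\|_2^2$ since $A^{i-1}\leq A^i$ for $A\geq 1$.

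For the top-order terms $|i,j,k|=M+1$, at least one of $i,j,k$ is positive, so one may choose a direction $\alpha\in\{x,y,z\}$ with a positive derivative count and integrate by parts once: $\int \mathfrak{H}_{ijk}\,\pa^{(i,j,k)}n_\nq\,dV = -\int \pa_\alpha\mathfrak{H}_{ijk}\cdot \pa^{(i,j,k)-e_\alpha}n_\nq\,dV$, with no boundary terms on the torus. The resulting derivative of $n_\nq$ has total order exactly $M$, so it falls within the second sum on the right-hand side; Cauchy--Schwarz and Young's inequality again deliver a $\tfrac{1}{8}A^{i-1}\|\nabla\mathfrak{H}_{ijk}\|_2^2$ piece together with a constant multiple of $A^{i-1}\|\pa^{(i,j,k)-e_\alpha}n_\nq\|_2^2$. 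Whichever direction $\alpha$ is chosen, the weight $A^{i-1}$ on the $n_\nq$ term is no larger than the admissible weight $A^{i'}$ (with $(i',j',k')=(i,j,k)-e_\alpha$) in the second sum on the right-hand side, using $A\geq 1$.

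Summing the two regimes, using that the total number of multi-indices with $|i,j,k|\leq M+1$ is bounded by a constant depending only on $M$, gives the claimed inequality with a constant $C=C(M,\|u\|_{L_t^\infty W^{M+3,\infty}})$ (the $u$-dependence appears nowhere here but is kept uniform per the paper's convention). The main book-keeping challenge is to verify uniformly across the three possible IBP directions in the top-order case that the $A$-weight produced on the lower-order $n_\nq$ term is dominated by the target weight $A^{i'}$; this works because the shift $(i,j,k)\mapsto(i,j,k)-e_\alpha$ either decreases $i$ by one (in which case the target weight decreases by exactly a factor of $A$, matching what Young's produces) or leaves $i$ unchanged (in which case the $A^{-1}$ from $\tfrac{A^i}{A}$ simply gives an even smaller weight, harmlessly absorbed).
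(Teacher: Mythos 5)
Your proof is correct, and its core mechanism is the same as the paper's: move exactly one derivative onto $\mathfrak H_{ijk}$ so that a full gradient $\|\na\mathfrak H_{ijk}\|_2$ appears (absorbed with weight $\tfrac{1}{4A}A^i$ by Young) while the derivative count on $n_\nq$ drops to at most $M$, then check that the resulting $A$-weight $A^{i-1}$ (or $A^{i'}$ after an $x$-integration by parts) is dominated by the admissible weight $A^{i'}$ in the second sum. The only organizational difference is the case split: the paper integrates by parts for every index, choosing the direction by the priority $i\neq 0$, then $j\neq 0$, then $k\neq 0$, whereas you integrate by parts only at top order $|i,j,k|=M+1$ and handle all $|i,j,k|\le M$ via the mean-zero-in-$x$ property of $\pal n_\nq$ together with Poincar\'e in $x$. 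That variant is equally valid and in fact slightly tidier at the index $(i,j,k)=(0,0,0)$, where the paper's displayed third sum (written with $\pa_z^{k-1}n_\nq$) formally requires $k\ge 1$ and would otherwise need exactly the Poincar\'e/mean-zero observation you make.
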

\begin{proof} 
We distinguish among three cases and estimate accordingly\myc{ \footnote{(Double Check!)}}
\begin{align}
&\lf|\frac{1}{A}\sum_{|i,j,k|\leq M+1}A^{i} \int \mf H_{ijk}\  \pal n_\nq dV\rg|\\
 &\leq\bigg|\frac{1}{A^{\frac{1}{2}}}\sum_{\substack{|i,j,k|\leq M+1\\ i\neq 0}} A ^{i-\frac{1}{2}}\int\pa_{x}  \mf H_{ijk}\ \pa_x^{i-1}\pa_y ^j\pa_z^k n_\nq dV\bigg|+\bigg|\frac{1}{A}\sum_{\substack{j+k\leq M+1\\ j\neq 0}} \int  \pa_y \mf H_{ijk}\  \pa_y^{j-1}\pa_z^k n_\nq dV\bigg|\\
 &\quad+\bigg|\frac{1}{A}\sum_{k\leq M+1}  \int \pa_z \mf H_{00k} \pa_z^{k-1} n_\nq dV\bigg|\\
&\leq\frac{1}{4A}\sum_{|i,j,k|\leq M+1}A^i\|\na \mf H_{ijk}\|_2^2 +\frac{C}{A^{\frac{1}{2}}}\sum_{|i,j,k|\leq M} A^{i+\frac{1}{2}}\|\pa_x^i\pa_y^j\pa_z^k n_\nq \|_2^2.
\end{align}
This is  \eqref{T_c_est}.
\end{proof}
The last lemma equips us with suitable estimates for the nonlinear terms in the cell-density equation. 
\begin{lem}\label{Lem:n_eq} Assume that $\mathfrak{H}_{ijk}\in H^1$. Further assume all the bootstrap hypotheses \eqref{Hypotheses}. Then the following estimate holds for $M\geq3$,
\begin{align}\label{T_n_est}
&\lf|\frac{1}{A}\sum_{|i,j,k|\leq M}A^{i+\frac{1}{2}} \int  \mathfrak{H}_{ijk}\ \pal(\na\cdot(n\na \cc))_\nq dV\rg| \\
&\leq \frac{1}{4A}\myr{\sum_{|i,j,k|\leq M}A^{i+\frac{1}{2}}\|\na \mf H_{ijk}\|_2^2}+\frac{C}{A}F_M[n_\nq,\cc_\nq]^2+\frac{C}{A^ {\frac{1}{2}}}F_M[n_\nq, \cc_\nq]\lf(\mf B_\infty^2+\mf B_{\lan n\ran; H^M}^2+\mf B_{\na\lan \cc\ran; H^M}^2\rg).
\end{align}
Here $F_M$ is defined in \eqref{F_M_sect_2}, $\mf B_{\lan n\ran; H^M}, \, \mf B_{\na \cz;H^M}$ are defined in \eqref{Hypotheses},  and $\mf B_\infty$ is defined in \eqref{Bound_Linf}. Finally, the constant $C$ is universal.
\end{lem}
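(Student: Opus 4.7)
The plan is to decompose the nonlinearity using $n = \lan n\ran + n_\nq$ and $\cc = \cz + \cc_\nq$. The purely $x$-independent product $\lan n\ran \na \cz$ vanishes under the $(\cdot)_\nq$ projection, so
\begin{align*}
(\na\cdot(n\na\cc))_\nq = \na\cdot(\lan n\ran\na\cc_\nq) + \na\cdot(n_\nq\na\cz) + (\na\cdot(n_\nq\na\cc_\nq))_\nq.
\end{align*}
For each of the three pieces I would integrate by parts once against $\mf H_{ijk}$, reducing the problem to controlling
\begin{align*}
\frac{1}{A}\sum_{|i,j,k|\leq M}A^{i+\frac{1}{2}}\Big|\!\int\na\mf H_{ijk}\cdot\pal(\lan n\ran\na\cc_\nq,\ n_\nq\na\cz,\ (n_\nq\na\cc_\nq)_\nq)\,dV\Big|,
\end{align*}
and then applying Cauchy--Schwarz and Young's inequality with $\epsilon=1/4$. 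This absorbs $\tfrac{1}{4A}\sum A^{i+1/2}\|\na\mf H_{ijk}\|_2^2$ and reduces matters to bounding the weighted $L^2$ norms $A^{i-1/2}\|\pal(\cdots)\|_2^2$.

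For the two linear-in-remainder pieces I would expand $\pal$ via Leibniz, noting that since $\lan n\ran$ and $\cz$ are $x$-independent, all $\pa_x^i$ derivatives land on the remainder factor. For each Leibniz term $\|\pa^{\alpha_1}\lan n\ran\,\pa^{\alpha_2}\na\cc_\nq\|_2$ (resp.\ with $\cz, n_\nq$), I would split into sub-cases based on the index distribution and use Sobolev/Gagliardo--Nirenberg pairings:
\begin{itemize}
\item the extremal case $|\alpha_1|=0$ uses $\|\lan n\ran\|_\infty\leq \mf B_\infty$ together with $A^i\|\pa^{\alpha_2}\na\cc_\nq\|_2^2\leq C F_M$ (coming directly from the definition of $F_M$), producing $A^{-1/2}\mf B_\infty^2 F_M$;
\item the extremal case $|\alpha_2|=0$ uses $\|\pa^{\alpha_1}\lan n\ran\|_2\leq \mf B_{\lan n\ran;H^M}$ and the pointwise bound $\|\na\cc_\nq\|_\infty^2\leq CF_M/A$ from \eqref{Bd_Linf_nq}, producing $A^{-3/2}\mf B_{\lan n\ran;H^M}^2 F_M$;
\item intermediate cases, where $1\leq|\alpha_\ell|\leq M-1$, are handled by $L^3\cdot L^6$ pairings via $H^{2/3}(\Torus^2)\hookrightarrow L^3$ on the $x$-average factor and $H^1(\Torus^3)\hookrightarrow L^6$ on the remainder factor, both of which are accessible within the regularity available from the bootstrap.
\end{itemize}
Summing all sub-cases, the linear pieces contribute $\tfrac{C}{A^{1/2}}F_M(\mf B_\infty^2+\mf B_{\lan n\ran;H^M}^2+\mf B_{\na\lan\cc\ran;H^M}^2)$ (an analogous argument works for $\na\cdot(n_\nq\na\cz)$, swapping the roles).

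For the quadratic remainder term I would apply a Moser-type product estimate $\|\pal(n_\nq\na\cc_\nq)\|_2\leq C\!\left(\|n_\nq\|_\infty\|\na\cc_\nq\|_{H^{|i,j,k|}}+\|\na\cc_\nq\|_\infty\|n_\nq\|_{H^{|i,j,k|}}\right)$ and insert the two pointwise bounds from \eqref{Bd_Linf_nq}, namely $\|n_\nq\|_\infty^2\leq CF_M/A^{1/2}$ and $\|\na\cc_\nq\|_\infty^2\leq CF_M/A$. Combined with the weighted bounds $A^{i+1/2}\|\pal n_\nq\|_2^2\leq F_M$ and $A^i\|\pal\na\cc_\nq\|_2^2\leq CF_M$, the $A^{i-1/2}$ prefactor collapses the powers of $A$ exactly to $A^{-1}$, yielding the desired $\tfrac{C}{A}F_M^2$ contribution.

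The main obstacle will be the bookkeeping of $A$-weights across the many sub-cases and the extremal regimes where all derivatives concentrate on one factor: each $\pa_x$ is weighted by $A^{1/2}$ inside $F_M$, so every Leibniz splitting and every Sobolev product estimate must be aligned with this anisotropic scaling, and the $M\geq 3$ hypothesis is used to guarantee that $H^2\hookrightarrow L^\infty$ on $\Torus^3$ is sufficient for every intermediate sub-case. If the extremal sub-cases for the linear pieces are handled carefully using the pointwise bound on $\na\cc_\nq$, no loss of $A$ occurs and the target estimate follows by summing the (finitely many) Leibniz contributions.
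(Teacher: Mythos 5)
Your proposal is correct and follows essentially the same route as the paper's proof: the same splitting of $(n\na\cc)_\nq$ into $\nz\na\cc_\nq+n_\nq\na\cz+(n_\nq\na\cc_\nq)_\nq$, one integration by parts with Young's inequality to absorb the gradient of $\mf H_{ijk}$, a weighted Leibniz expansion with the extremal terms handled by the bootstrap $L^\infty$ bounds \eqref{Bd_Linf_nq}--\eqref{Bound_Linf} and the intermediate terms by Sobolev pairings. The only cosmetic difference is that you pair intermediate Leibniz terms via $L^3\cdot L^6$ embeddings, whereas the paper uses the $L^4\cdot L^4$ Gagliardo--Nirenberg bound \eqref{est_L4_H1}; both align with the anisotropic $A$-weights in $F_M$ in the same way.
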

The remaining part of the section is organized as follows. In Step \# 2, we prove \eqref{F_M_Reg_est}. In Step \# 3, we  sketch the proof of \eqref{FM_Decay_est}. In Step \# 4, we carry out a standard argument to prove the enhanced dissipation.  

\noindent
\textbf{Step \#  2: Proof of \eqref{F_M_Reg_est}.}
We recall the definition \eqref{F_M_sect_2} and compute that
\begin{align}\label{ddtF_M}\\
\frac{d}{dt}F_M=&\frac{d}{dt}\sum_{|i,j,k|\leq M+1}A^{i} \|\pal \cc_\nq\|_2^2+\frac{d}{dt}\sum_{|i,j,k|\leq M}A^{i+1/2} \|\pal n_\nq\|_2^2 
=: \frac{d}{dt}T_{\cc} +\frac{d}{dt}T_n .
\end{align}
We first consider the simpler component $\frac{d}{dt}T_\cc$. Direct calculation yields that
\begin{align}\label{T_c_R_12} 
\frac{d}{dt}T_{\mf C}
=&-\frac{2}{A}\sum_{|i,j,k|\leq M+1}A^{i} \|\na \pal \cc_\nq\|_2^2  -2\sum_{|i,j,k|\leq M+1}A^{i}  \int \pal \cc_\nq \ [\pa_y^j,u](\pa_x^{i+1}\pa_z^k \cc_\nq) dV \\
&+\frac{2}{A}\sum_{|i,j,k|\leq M+1}A^{i} \int \pal \cc_\nq\  \pal n_\nq dV\\
=:&-D_\cc  +T_{\cc;1} +T_{\cc;2} .
\end{align}
We estimate  the $T_{\cc;1} $ term with the Lemma  \ref{Lem:comm} as follows
\begin{align}
|T_{\cc;1} |\leq \frac{C}{A^{1/2} }\sum_{|i,j,k|\leq M+1}A^{i} \|\pa_x^i\pa_y^j\pa_z^k \cc_\nq\|_2^2. \label{T_c_R_1} 
\end{align}  
Next we estimate the $T_{\cc;2} $-term in \eqref{T_c_R_12} with $\eqref{T_c_est}_{\mf H_{ijk}=\pal\cc_\nq}$
\begin{align}
|T_{\cc;2} |\leq\frac{1}{2}D_\cc+\frac{C}{A^{1/2} }\sum_{|i,j,k|\leq M}A^{i+1/2}\|\pal n_\nq\|_2^2.
\end{align} 
Therefore, by recalling the definition of the functional $F_M$ \eqref{F_M_sect_2}, we obtain that 
\begin{align}\label{T_c_R_est}
\frac{d}{dt}T_\cc \leq&-\frac{1}{A}\sum_{|i,j,k|\leq M+1}A^{i} \|\na \pal \cc_\nq\|_2^2+\frac{C}{A^{1 /2}}F_M[n_\nq,\mf C_\nq]. 
\end{align}

Next we estimate the term $\frac{d}{dt}T_{n} $ in \eqref{ddtF_M}. The time derivative can be decomposed as follows
\begin{align}\label{T_n_R_12} \\
\frac{d}{dt}T_n
=&-\frac{2}{A}\sum_{|i,j,k|\leq M}A^{i+\frac{1}{2}} \|\na \pal n_\nq\|_2^2 -2\sum_{|i,j,k|\leq M}A^{i+\frac{1}{2}} \int \pal n_\nq\  [\pa_y^j,u ](\pa_x^{i+1}\pa_z^k n_\nq) dV\\
&-\frac{2}{A}\sum_{|i,j,k|\leq M}A^{i+\frac{1}{2}} \int \pal n_\nq \ \pal(\na\cdot(n\na \cc))dV\\
=:&-D_n +T_{n;1}+T_{n;2}.
\end{align} 
First we apply Lemma \ref{Lem:comm} with $\mf H_{ijk}=\pal n_\nq, \, f=n_\nq$ to estimate the commutator term $T_{n;1}$ in \eqref{T_n_R_12} as follows:
\begin{align}
|T_{n;1}|\leq&\frac{C}{A^{1/2}}\lf(\sum_{|i,j,k|\leq M}A^{i+1/2} \|\pa_x^i\pa_y^j\pa_z^k n_\nq\|_2^2\rg).
\end{align}
Next we estimate the $T_{n;2}$-term in \eqref{T_n_R_12} with the estimate $\eqref{T_n_est}_{\mf H_{ijk}=\pal n_\nq}$ as follows
\begin{align}
|T_{n;2}|\leq \frac{1}{4 }D_n+\frac{C }{A}F_M[n_\nq,\cc_\nq]^2+\frac{C}{A^{1 /2}}F_M[n_\nq, \cc_\nq]\lf(\mf B_\infty^2+\mf B_{\lan n\ran; H^M}^2+\mf B_{\na\lan \cc\ran; H^M}^2\rg).
\end{align}
Hence,
\begin{align}\label{T_n_R_est}
\frac{d}{dt}T_n\leq\frac{C }{A}F_M[n_\nq,\cc_\nq]^2+\frac{C}{A^{1 /2}}F_M[n_\nq, \cc_\nq]\lf(1+\mf B_\infty^2+\mf B_{\lan n\ran; H^M}^2+\mf B_{\na\lan \cc\ran; H^M}^2\rg).
\end{align}
Collecting the estimates developed thus far (\eqref{T_c_R_est}, \eqref{T_n_R_est}), we have that
\begin{align}
\frac{d}{dt}F_M\leq\frac{C }{A}F_M^2+\frac{C}{A^{1 /2}}F_M\lf(1+\mf B_\infty^2+\mf B_{\lan n\ran; H^M}^2+\mf B_{\na\lan \cc\ran; H^M}^2\rg).
\end{align}
A standard ODE argument yields that if $A$ is chosen large enough,  
\begin{align}
F_M[n_\nq,\cc_\nq](t_\star+\tau)\leq 2F_M[n_\nq,\cc_\nq](t_\star),\quad \forall\tau\in [0,\delta^{-1}A^{1/3}].
\end{align}

\noindent
\textbf{Step \# 3: Proof of \eqref{FM_Decay_est}.}

The second estimate \eqref{FM_Decay_est} is a natural consequence of the following estimates,
\begin{align}\label{FM_Diff_est}
\sum_{|i,j,k|\leq M}&A^{i+1/2 } \|\pa_x^i\pa_y^j\pa_z^k n_{\neq}-S_{t_\star}^{t_\star+\tau}\pal n_\nq\|_2^2 +\sum_{|i,j,k|\leq M+1}A^{i} \|\pa_{x}^i\pa_y^j\pa_z^k\cc_\nq-S_{t_\star}^{t_\star+\tau}\pal \cc_\nq\|_2^2\\
& \leq\frac{1}{\myr{32}}F_M[t_\star],\quad \forall \tau\in [0,\delta^{-1}A^{1/3}].
\end{align}
Application of Theorem \ref{Thm:Lnr_ED} yields that there exists $0<\delta<\delta_0$ such that 
\begin{align} \label{Chc_delta}
\sum_{|i,j,k|\leq M}&A^{i +1/2} \|S_{t_\star}^{t_\star+\delta^{-1} A^{1/3}}\pal n_\nq\|_2^2 +\sum_{|i,j,k|\leq M+1}A^{i} \|S_{t_\star}^{t_\star+\delta^{-1} A^{1/3}}\pal \cc_\nq \|_2^2,\\
&\leq C_*F_M[t_\star]e^{-\frac{2\delta_0}{\delta}}\leq \frac{1}{\myr{128}}F_M[t_\star], \quad0<\delta\leq \frac {2\delta_0}{\log(\myr{128C_*})}.
\end{align}
Then we observe that the two estimates above yield the bound
\begin{align}
F_M[ t_\star+\delta^{-1}A^{1/3}  ]\leq 2\lf(\frac{1}{32}+\frac{1}{128}\rg)F_M[t_\star] \leq\frac{1}{\myr{e^2}}F_M[t_\star],
\end{align}
which is \eqref{FM_Decay_est}. Hence in the remaining part of the step, we prove the estimate \eqref{FM_Diff_est}.

To simplify the notation, we define
\begin{align}
\mathbb{V}_{ijk}:=\pa_x^i\pa_y^j\pa_z^k n_{\neq}-S_{t_\star}^{t_\star+\tau}\lf[\pal n_\nq(t_\star)\rg], \quad 
\mathbb{W}_{ijk}:=\pa_{x}^i\pa_y^j\pa_z^k\cc_\nq-S_{t_\star}^{t_\star+\tau}\lf[\pal \cc_\nq(t_\star)\rg].
\end{align}
We further define the norm that captures these differences:
\begin{align}
\mathbb{D}_M:=\sum_{|i,j,k|\leq M}A^{i+1/2} \|\mathbb V_{ijk}\|_2^2+\sum_{|i,j,k|\leq M+1}A^{i} \|\mathbb W_{ijk}\|_2^2.
\end{align}
The time evolution of $\mathbb{D}_M$ have two components, i.e.,
\begin{align}
\frac{d}{dt}\mathbb{D}_M=\frac{d}{dt}\sum_{|i,j,k|\leq M}A^{i+1/2} \|\mathbb V_{ijk}\|_2^2+\frac{d}{dt}\sum_{|i,j,k|\leq M+1}A^{i} \|\mathbb W_{ijk}\|_2^2.\label{ddt_D_M}
\end{align}
By invoking Lemma \ref{Lem:comm} and Lemma \ref{Lem:n_eq}, we estimate the first term in \eqref{ddt_D_M} as follows
\begin{align}\label{V_n}
\frac{d}{dt}&\sum_{|i,j,k|\leq M}A^{i+1/2} \|\mathbb V_{ijk}\|_2^2\\ 
=&-\frac{2}{A}\sum_{|i,j,k|\leq M}A^{i+1/2} \|\na \mathbb V_{ijk}\|_2^2 -2\sum_{|i,j,k|\leq M}A^{i+1/2} \int \mathbb V_{ijk}\  [\pa_y^j,u](\pa_x^{i+1}\pa_z^k n_\nq) dV\\
&-\frac{2}{A}\sum_{|i,j,k|\leq M}A^{i+1/2} \int\mathbb V_{ijk}\ \pal(\na\cdot(n\na \cc))dV\\
\leq&-\frac{1}{A}\sum_{|i,j,k|\leq M}A^{i+1/2} \|\na \mathbb V_{ijk}\|_2^2+\frac{C}{A^{1/2}}\sum_{|i,j,k|\leq M}A^{i+1/2}\|\mathbb{V}_{ijk}\|_2^2+\frac{C}{A^{1/2}}F_M[n_\nq,\cc_\nq]\CC^2\\
&+\frac{C}{A}F_M[n_\nq,\cc_\nq]^2. 
\end{align} We apply Lemma \ref{Lem:comm}, the estimate $\eqref{T_c_est}_{\mf H_{ijk}=\mathbb W_{ijk}}$ to estimate the second term in \eqref{ddt_D_M} as follows
\begin{align}\label{V_c}
\frac{d}{dt}&\sum_{|i,j,k|\leq M+1}A^{i} \|\mathbb W_{ijk}\|_2^2\\
=&-\frac{{2}}{A}\sum_{|i,j,k|\leq M+1}A^{i} \|\na \mathbb W_{ijk}\|_2^2  -2\sum_{|i,j,k|\leq M+1}A^{i} \int\mathbb W_{ijk} \ [\pa_y^j,u](\pa_x^{i+1}\pa_z^k \cc_\nq) dV \\
&+\frac{2}{A}\sum_{|i,j,k|\leq M+1}A^{i} \int \mathbb W_{ijk}\  \pal n_\nq dV\\
\leq&-\frac{1}{A}\sum_{|i,j,k|\leq M+1}A^{i} \|\na \mathbb W_{ijk}\|_2^2+\frac{C}{A^{1/2}}\sum_{|i,j,k|\leq M+1}A^{i}\|\mathbb{W}_{ijk}\|_2^2+\frac{C}{A^{1/2}}F_M[n_\nq,\cc_\nq] .
\end{align}  
Recalling the regularity bound \eqref{F_M_Reg_est}, we have that 
\begin{align}
\frac{d}{dt}&\mathbb D_{M} \leq  \frac{C}{A^{1/2}}\mathbb{D}_M+ \frac{C }{A}F_M[t_\star]^2+\frac{C}{A^{1 /2}}F_M[t_\star]\CC^2,\quad \mathbb D_{M}(t=t_\star)=0. 
\end{align}
A standard ODE argument yields that if $A$ is chosen large enough compared to the bootstrap constants, the estimate \eqref{FM_Diff_est} holds. This concludes Step \# 3. 

\noindent
\textbf{Step \# 4: Conclusion.}
The remaining argument to derive the enhanced dissipation estimate \eqref{ConED_FM} is standard. \myr{If $t\in \delta^{-1}A^{1/3}\mathbb{N}$, then we have that
\begin{align}
F_M[n_\nq(t),\cc_\nq(t)]\leq F_M[n_{\text{in};\nq}, \cc_{\text{in};\nq}]\exp\lf\{-\frac{2\delta t}{A^{1/3}}\rg\},\quad t\in \delta^{-1}A^{1/3}\mathbb{N}.
\end{align} For general $ t\geq0$, we identify the integer $N_t\in\mathbb{N}$ so that $t\in \delta^{-1}A^{1/3}[N_t,N_t+1)$. As a consequence of the estimates \eqref{F_M_Reg_est}, \eqref{FM_Decay_est}, we have
\begin{align}
 F_M&[t]\leq 2F_M[N_t\delta^{-1}A^{1/3}]\leq 2e^2 F_M[0]\exp\{-2(N_t+1)\} 
\leq   2e^2 F_M[0] \exp\lf\{ -\frac{2 \delta t}{ A^{1/3}} \rg\},\qquad\forall t\in[0,\infty).
\end{align} Hence we reach the conclusion \eqref{ConED_FM}. }

To conclude this section, we provide the proof of the technical Lemma \ref{Lem:n_eq}. 

\begin{proof}[Proof of Lemma \ref{Lem:n_eq}]
We decompose the left hand side of \eqref{T_n_est} as follows
\begin{align}\label{T_n_123}\\
\frac{1}{A}&\sum_{|i,j,k|\leq M}A^{i+1/2}  \int  \mathfrak{H}_{ijk}\ \pal(\na\cdot(n\na \cc))_\nq dV \\
= &-\frac{1}{A}\sum_{|i,j,k|\leq M}A^{i+1/2} \int  \na\mathfrak{H}_{ijk}\cdot \pal\lf(\nz\na \cc_\nq+ n_\nq\na \cz+ n_\nq\na \cc_\nq\rg)_\nq dV
=:T_{1}+T_{2}+T_{3}.
\end{align}

Before progressing to estimate each term in \eqref{T_n_123}, we recall that from the Gagliardo-Nirenberg inequality, we have the following estimate
\begin{align}\label{est_L4_H1}
\|\pal f\|_{L^4(\Torus^3)}\leq C\|\pal f\|_{L^2(\Torus^3)}^{1/4}\|\na \pal f\|_{L^2(\Torus^3)}^{3/4}\leq C\|\na \pal f\|_{L^2(\Torus^3)},\quad \overline{f}=0.
\end{align}

Now we estimate the $T_{1}$ term in \eqref{T_n_123} as follows
\begin{align}
\label{T_n_1} \\
T_{1}\leq& \frac{1}{12A}\sum_{|i,j,k|\leq M}A^{i+1/2} \|\na \mf H_{ijk}\|_2^2+\frac{C }{A^{1/2}}\sum_{\substack{|i,j,k|\leq M\\ i\neq 0}}A^i \sum_{\substack{(j',k')\leq (j,k)\\ (j',k')\neq  (j,k)}}\|\pa_y^{j-j'}\pa_z^{k-k'}\lan n\ran\|_4^2 \|\pa_x^{i}\pa_y^{j'}\pa_z^{k'}\na \cc_\nq\|_4^2\\
&+\frac{C }{ A^{1/2}}\sum_{|i,j,k|\leq M}A^{i} \|\lan n \ran\|_\infty^2\|\pal \na \cc_\nq\|_2^2+\frac{C }{A^{1/2}}\sum_{j+k\leq M} \|\pa_y^{j}\pa_z^{k} \nz\|_2^2\|\na \cc_\nq\|_\infty^2\\
&+\frac{C }{A^{1/2}}\sum_{j+k\leq M} \sum_{\substack{(j',k')\leq (j,k)\\ (j',k')\neq (0,0), (j,k)}}\|\pa_y^{j-j'}\pa_z^{k-k'} \nz\|_4^2\|\pa_y^{j'}\pa_z^{k'}\na \cc_\nq\|_4^2\\
=:&\sum_{\ell=1}^5 T_{1,\ell}.
\end{align}
We apply the $L^4$-estimate \eqref{est_L4_H1} to obtain that 
\begin{align}
T_{1;2}\leq &\frac{C }{A^{1/2 }}\sum_{\substack{|i,j,k|\leq M\\ i\neq 0}}\sum_{\substack{(j',k')\leq (j,k)\\ (j',k')\neq  (j,k)}} A^i \|\pa_x^{i}\pa_y^{j'}\pa_z^{k'}\na^2 \cc_\nq\|_2^2\|\pa_y^{j-j'}\pa_z^{k-k'}\na\lan n\ran\|_2^2\\
\leq &\frac{C}{A^{1/2} }\sum_{  |i,j,k|\leq M+1 }  A^i \|\pa_x^{i}\pa_y^{j}\pa_z^{k}  \cc_\nq\|_2^2\mf{B}_{\lan n\ran;H^M}^2.
\end{align}
Application of the $L^\infty$-estimates  \eqref{Bound_Linf} yields that
\begin{align}
T_{1,3}\leq&  \frac{C}{A^{1/2 }L^{1/2}}\sum_{|i,j,k|\leq M+1 } A^{i} \|\pal  \cc_\nq\|_2^2\mf B_\infty^2.
\end{align}Now, we apply the bound  \eqref{Bd_Linf_nq} to estimate the $T_{1,4}$-term in \eqref{T_n_1} as follows:
\begin{align}
T_{1,4}\leq \frac{C}{A^{1/2} }\sum_{j+k\leq M} \|\pa_y^{j}\pa_z^{k} \nz\|_2^2F_M[n_\nq,\cc_\nq]\leq \frac{C}{A^{1/2} }\mf{B}^2_{\nz;H^M}F_M[n_\nq,\cc_\nq]. 
\end{align}
We estimate the $T_{1,5}$ with \eqref{est_L4_H1} as follows:
\begin{align}
T_{1,5}\leq &\frac{C }{A^{1/2}}\sum_{j+k\leq M-1} \|\pa_y^{j}\pa_z^{k}\na^2 \cc_\nq\|_2^2\mf{B}_{\lan n\ran; H^M}^2 
\leq  \frac{C}{A^{1/2 } }F_M[n_\nq,\cc_\nq]\mf B_{\lan n\ran;H^M}^2.
\end{align}
To conclude, we have that 
\begin{align}
T_{1}\leq &\frac{1}{12A}\sum_{|i,j,k|\leq M}A^{i+1/2} \|\na \mf H_{ijk}\|_2^2+ \frac{C}{A^{1/ 2}} F_M[n_\nq, \cc_\nq]\CC^2.
\end{align}
Now a similar argument with the $L^\infty$-estimates \eqref{Bd_Linf_nq}, \eqref{Bound_Linf} and the $L^4$-estimate \eqref{est_L4_H1} yields that the $T_{2}$ term in \eqref{T_n_123} is bounded as follows\begin{align}
\label{T_n_2}T_{2}\leq& \frac{1}{12A}\sum_{|i,j,k|\leq M}A^{i+1/2} \|\na \mf H_{ijk}\|_2^2\\
&+\frac{C}{A}\sum_{\substack{|i,j,k|\leq M\\ i\neq 0}}A^{i+1/2} \sum_{\substack{(j',k')\leq(j,k)\\ (j',k')\neq (0,0),  (j,k)}} \|\pa_x^{i}\pa_y^{j'}\pa_z^{k'} n_\nq\|_4^2\|\pa_y^{j-j'}\pa_z^{k-k'}\na \cz\|_4^2\\
&+\frac{C}{ A^{}}\sum_{|i,j,k|\leq M}A^{i+1/2} \|\pal  n_\nq\|_2^2\|\na\lan \cc \ran\|_\infty^2+\frac{C }{A^{1/2}}\sum_{j+k\leq M} \|\pa_y^{j}\pa_z^{k} \na\cz\|_2^2\| n_\nq\|_\infty^2\\
&+\frac{C }{A^{1/2}}\sum_{j+k\leq M} \sum_{\substack{(j',k')\leq (j,k)\\ (j',k')\neq (0,0), (j,k)}}\|\pa_y^{j-j'}\pa_z^{k-k'} \na\cz\|_4^2\|\pa_y^{j'}\pa_z^{k'} n_\nq\|_4^2\\
\leq&\frac{1}{12A}\sum_{|i,j,k|\leq M}A^{i+1/2} \|\na \mf H_{ijk}\|_2^2\\
&+\frac{C}{A}\sum_{\substack{|i,j,k|\leq M\\ i\neq 0}}\sum_{\substack{(j',k')\leq (j,k)\\ (j',k')\neq  (j,k)}} A^{i+1/2} \|\pa_x^{i}\pa_y^{j'}\pa_z^{k'}\na n_\nq\|_2^2\|\pa_y^{j-j'}\pa_z^{k-k'}\na^2\lan \cc\ran\|_2^2\\
&+\frac{C}{A^{}}\sum_{|i,j,k|\leq M} A^{i+1/2 }  \|\pal  n_\nq\|_2^2\mf B_\infty^2+\frac{C}{A}\sum_{j+k\leq M} \|\pa_y^{j}\pa_z^{k} \na\cz\|_2^2F_M[n_\nq,\cc_\nq]\\
&+\frac{C}{A}\sum_{j+k\leq M}A^{1/2}  \sum_{\substack{(j',k')\leq (j,k)\\ (j',k')\neq (0,0), (j,k)}}\|\pa_y^{j-j'}\pa_z^{k-k'}\na \cz\|_{\dot H^1}^2\|\pa_y^{j'}\pa_z^{k'}\na n_\nq\|_2^2\\
\leq &\frac{1}{12A}\sum_{|i,j,k|\leq M}A^{i+1/2} \|\na \mf H_{ijk}\|_2^2+\frac{C}{A }F_M[n_\nq, \cc_\nq]\CC^2. 
\end{align}

Finally, we estimate the $T_{3 }$ term in \eqref{T_n_123}. Here we use the definition of $F_M$ \eqref{F_M_sect_2}, the $L^\infty$-estimates \eqref{Bound_Linf} and the $L^4$-bound  \eqref{est_L4_H1} to estimate it as follows
\begin{align}
\label{T_n_3} T_{3}\leq& \frac{1}{12A}\sum_{|i,j,k|\leq M}A^{i+1/2} \|\na \mf H_{ijk}\|_2^2\\
&+\frac{C}{A}\sum_{|i,j,k|\leq M}A^{i+1/2} \sum_{\substack{(i',j',k')\leq (i,j,k)\\ (i',j',k')\neq (0,0,0), (i,j,k)}}\|\pa_{x}^{i-i'}\pa_y^{j-j'}\pa_z^{k-k'}n_\nq\|_4^2 \|\pa_x^{i'}\pa_y^{j'}\pa_z^{k'}\na \cc_\nq\|_4^2\\
&+\frac{C }{A^{1/2}}\sum_{|i,j,k|\leq M}\|n_\nq\|_\infty^2A^{i} \|\pal \na \cc_\nq\|_2^2+\frac{C}{A}\sum_{|i,j,k|\leq M} A^{i+1/2}  \|\pal n_\nq\|_2^2\|\na \cc_\nq\|_\infty^2\\
\leq& \frac{1}{12A}\sum_{|i,j,k|\leq M}A^{i+1/2} \|\na \mf H_{ijk}\|_2^2\\
&+\frac{ C}{A}\sum_{|i,j,k|\leq M}\sum_{\substack{(i',j',k')\leq (i,j,k)\\ (i',j',k')\neq (0,0,0), (i,j,k)}}A^{(i-i')+\frac 1 2}  \|\pa_x^{i-i'}\pa_y^{j-j'}\pa_z^{k-k'}\na n_\nq\|_2^2A^{i'}  \|\pa_x^{i'}\pa_y^{j'}\pa_z^{k'}\na \cc_\nq\|_{\dot H^1}^2\\
&+\frac{CF_M[n_\nq,\cc_\nq]^2}{A}\\
\leq &\frac{1}{12A}\sum_{|i,j,k|\leq M}A^{i+1/2}\|\na \mf H_{ijk}\|_2^2+\frac{C}{A}F_M[n_\nq, \cc_\nq]^2.
\end{align}
Combining the decomposition \eqref{T_n_123}, the estimates \eqref{T_n_1}, \eqref{T_n_2} and \eqref{T_n_3} yields the result \eqref{T_n_est}. 
\end{proof}

\ifx
The small  initial chemical gradient \eqref{smll_nq} ensures that 
\begin{align}
\mathcal{F}[n_{\mathrm{in};\neq},\na c_{\mathrm{in};\neq}]=\mathcal{O}(A^{1/6}).
\end{align}

With the functional $\mathcal{F}$ introduced, we can explicitly lay out the battle plan. Fix 
\begin{align}\label{Defn_delta}
\delta:=\frac{\delta_0 }{\log (8C_0^2)},
\end{align} where $C_0,\, \delta_0$ are defined in \eqref{ED_tmshr}. The goal of this section is to prove the following enhanced dissipation estimates for $\mathcal{F}$
\begin{align}\label{F_ED}
\mathcal{F}[\pa_x^j n_{\neq}(t), \pa_x^j \cc_{\neq}(t)]\leq C_{ED}\mathcal{F}[\pa_x^j n_{\mathrm{in};\neq},\pa_x^j c_{\mathrm{in};\neq}]e^{-2\delta \frac{t}{A^{1/3} }},\quad j=\{0,1\}, \, \forall t\in[0,T_\star].
\end{align}
Applying the same argument as in \eqref{...}, we have that the estimates \eqref{F_ED} is guaranteed by the decay estimate and regularity estimate of the functional $\mathcal{F}[n_{\neq}(t_\star+t),\cc_{\neq}(t_\star+t)]$:

\noindent
a) $\mathcal{F}$-decay estimate:
\begin{align}
\mathcal{F}[\pa_x^j n_{\neq}(t_\star+\delta^{-1}A^{1/3} ),\pa_x^j \cc_{\neq}(t_\star+\delta^{-1}A^{1/3} )]\leq& \frac{1}{2}\mathcal{F}[\pa_x^j n_{\neq}(t_\star), \pa_x^j \cc_{\neq}(t_\star)],\quad j\in\{0,1\};\label{Decay_estimate_F_nonlinear}
\end{align}
b) $\mathcal{F}$-regularity estimate:
\begin{align}
\mathcal{F}[\pa_x^j n_{\neq}(t_\star+t),\pa_x^j \cc_{\neq}(t_\star+t)]\leq 4\mathcal{F}[\pa_x^jn_{\neq}(t_\star),\pa_x^j \cc_{\neq}(t_\star)],\quad \forall t\in[0, \delta^{-1}A^{1/3} ], \quad j\in\{0,1\}.\label{Regularity_estimate_F_nonlinear}
\end{align}
We first apply Lemma \ref{lem:ED} to show that the quantities $\mathcal{F}[ S_{t_\star,t_\star+t}n_{\neq}(t_\star), S_{t_\star,t_\star+t}|\pa_x|^{1/2} \cc_{\neq}(t_\star),S_{t_\star,t_\star+t}\na \cc_{\neq}(t_\star) ]$ undergoes enhanced dissipation. Recall that the $S_{t_\star,t_\star+t}(\cdot)$ denote the solution operator associated with the passive scalar. Next by showing that the $\mathcal{F}$-induced distance between the linearized solutions $\wwt n_{\neq}, \wwt \cc_{\neq}$ and the the passive scalar solutions is small, we prove the enhanced dissipation of the quantity $\mathcal{F}[\wwt{n}_{\neq},\na \wwt \cc_{\neq}]$ for $A$ chosen large enough. In this step, we address the issue of the shear flow-induced destabilizing effect. Finally, we estimate the $\mathcal{F}$-distance between the linearized solutions $\wwt {n}_{\neq},\, \wwt \cc_{\neq}$ and the exact solutions $n_{\neq},\cc_{\neq}$, and show that it is small on the time interval $[t_\star,t_\star+\delta^{-1}A^{1/3}]$, 
which in turn yields the desired estimates 
\eqref{Decay_estimate_intro_sect_6} and \eqref{Regularity_estimate_intro_sect_6}. The nonlinear self-interaction between remainders are treated in this step. 

In the remaining part of the section , we adopt the following notation conventions:
\begin{align}
B^2:=&\mathcal{F}[n_{\neq}(t_\star), \na \cc_{\neq}(t_\star)]\\
=&A^{1/6}\|n_{\neq}(t_\star)\|_2^2+A^{3/4}\|\pa_x \cc_{\neq}(t_\star)\|_2^2+\sum_{i=1}^2\|\pa_{y_i} \cc_{\neq}(t_\star)\|_2^2+\ep^{-2}\|e^{-\frac{\delta_0 t_\star}{ 4A^{1/3}}}\na^2 c_{\mathrm{in};\neq}\|_2^2. \label{defn_B_B_1} 
\end{align}
Next we consider the linearization of the original problem \eqref{linearized_system}. We do the Fourier transform in the $x$-direction. The corresponding Fourier coefficients are indexed by $k$. Here we omit the $\wh{(\cdot)}$ notation. The resulting system is the following:\begin{subequations}
\label{linearized_system_k_by_k}
\begin{align}
\pa_t \wwt {n}_{k}&=\frac{1}{A}(\de_y-|k|^2) \wwt{n}_{k}-u(t,y )ik \wwt{ n}_{k},\\
\quad \pa_t \wwt {c}_{k} &= \frac{1}{A}(\de_y-|k|^2) \wwt{c}_{k}-u(t,y )ik \wwt{c}_{k};\\
\pa_t  {d}_{k} &= \frac{1}{A}(\de_y-|k|^2) {d}_{k}-u(t,y )ik  {d}_{k};\\
\wwt n_k(t_\star,\cdot)&= n_k(t_\star,\cdot),\quad \wwt c_k(t_\star,\cdot)= c_k(t_\star,\cdot),\quad d_k(t=0,\cdot)=c_{\mathrm{in},k}(\cdot).
\end{align}
\end{subequations}

At the next stage, we compare the solutions $\{\wwt n_{\neq},\, \wwt {c}_{\neq}\}$ to the linearized system \eqref{linearized_system_k_by_k} and the passive scalar solutions $S_{t_\star,t_\star+t}\{n_\nq,\na \cc_{\neq}\}$. This the the content of the next lemma.
\begin{lem} Consider the solutions to the linearized system  \eqref{linearized_system_k_by_k}. There exists universal constant $C$ such that the following enhanced dissipation estimate holds on the time interval $t\in[0,\delta^{-1}A^{-1/3}]$ 
\begin{align}\label{Linearized_soln_F_difference}
\bigg|\mathcal{F}[S_{t_\star,t_\star+t }  n_{\neq}(t_\star),S_{t_\star,t_\star+ t} \na  \cc_{\neq}(t_\star) ]-\mathcal{F}[\wwt n_{\neq}(t_\star+t),\na \wwt \cc_{\neq}(t_\star+t)]\bigg|\leq C \frac{1+\|\pa_{y }u\|_\infty^2}{\delta^2 A^{1/24}} B^2,\\
\mathcal{G}[\wwt n_{\neq}(t_\star+t),\na \wwt \cc_{\neq}(t_\star+t)]\leq \left(1+C \frac{1+\|\pa_{y }u\|_\infty^2}{\delta^2 A^{1/24}}\right) B^2. \label{Linearized_soln_G}
\end{align}
\end{lem}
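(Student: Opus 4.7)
The strategy is to exploit the fact that each component of the linearized system \eqref{linearized_system_k_by_k} is exactly the passive scalar equation: because the initial data match at $t=t_\star$, we have the identities $\wwt n_{\neq}(t_\star+t)=S_{t_\star}^{t_\star+t}n_{\neq}(t_\star)$ and $\wwt{c}_{\neq}(t_\star+t)=S_{t_\star}^{t_\star+t}c_{\neq}(t_\star)$. Consequently the components of $\mathcal{F}$ that involve only $x$-derivatives (that is, $A^{1/6}\|n_{\neq}\|_2^2$ and $A^{3/4}\|\pa_x c_{\neq}\|_2^2$) match identically between the two sides of \eqref{Linearized_soln_F_difference}, because $\pa_x$ commutes with the passive scalar semigroup $S_{t_\star}^{t_\star+t}$. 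The only discrepancy comes from terms featuring $\pa_{y}$ derivatives of $\cc_{\neq}$, where $[\pa_y,u\pa_x]=(\pa_y u)\pa_x$ produces a nontrivial commutator.

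The core computation is to control $\pa_y\wwt c_{\neq}-S_{t_\star}^{t_\star+t}\pa_y c_{\neq}(t_\star)$. Differentiating the passive scalar equation in $y$ and subtracting the evolution equation for $S_{t_\star}^{t_\star+t}\pa_y c_{\neq}(t_\star)$, one finds that this difference solves the passive scalar equation with zero initial data and forcing $-(\pa_y u)\pa_x\wwt c_{\neq}$. Duhamel's formula then yields
\begin{align}
\|\pa_y\wwt c_{\neq}(t_\star+t)-S_{t_\star}^{t_\star+t}\pa_y c_{\neq}(t_\star)\|_2\leq \|\pa_y u\|_\infty\int_0^t\|\pa_x\wwt c_{\neq}(t_\star+s)\|_2\,ds.
\end{align}
Since $\pa_x$ commutes with the passive scalar flow, Theorem \ref{Thm:Lnr_ED} gives $\|\pa_x\wwt c_{\neq}(t_\star+s)\|_2\leq C_\ast\|\pa_x c_{\neq}(t_\star)\|_2\exp(-\delta_0 A^{-1/3}s)$, so the time integral contributes at most $C_\ast\delta_0^{-1}A^{1/3}\|\pa_x c_{\neq}(t_\star)\|_2$. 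Squaring and recalling that $A^{3/4}\|\pa_x c_{\neq}(t_\star)\|_2^2\leq B^2$ converts this into a bound of the form $C\|\pa_y u\|_\infty^2\delta^{-2}A^{-\alpha}B^2$, where $\alpha>0$ arises from balancing the $A^{1/3}$ cost of Duhamel against the weights in $\mathcal{F}$; the precise value $\alpha=1/24$ is obtained after accounting for the $\varepsilon^{-2}\|\na^2 c_\mathrm{in;\nq}\|$ ballast which handles the $\na^2 c$ slot analogously.

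For the $\na^2 c_{\neq}$ piece of $\mathcal{F}$, repeat the commutator step: $\pa_y^2\wwt c_{\neq}-S_{t_\star}^{t_\star+t}\pa_y^2 c_{\neq}(t_\star)$ solves the passive scalar equation with forcing $-2(\pa_y u)\pa_x\pa_y\wwt c_{\neq}-(\pa_y^2 u)\pa_x\wwt c_{\neq}$. Duhamel combined with Theorem \ref{Thm:Lnr_ED} applied to $\pa_x\pa_y\wwt c_{\neq}$ and $\pa_x\wwt c_{\neq}$ (both of which enjoy enhanced dissipation because $\pa_x$ commutes with $S_{t_\star}^{t_\star+t}$) produces the same type of bound, controlled by the term $\varepsilon^{-2}\|e^{-\delta_0 t_\star/(4A^{1/3})}\na^2 c_\mathrm{in;\nq}\|_2^2$ inside $B^2$; here the exponential prefactor is essential so that the bound does not lose uniformity in $t_\star$.

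The second estimate \eqref{Linearized_soln_G} is then immediate: by the triangle inequality, $\mathcal{G}[\wwt n_{\neq},\na\wwt c_{\neq}]\leq \mathcal{F}[S n_{\neq}(t_\star),S\na c_{\neq}(t_\star)]+|\mathcal{F}-\mathcal{F}|$, and the first term is bounded by $B^2$ via the $L^2$-contractivity of $S_{t_\star}^{t_\star+t}$, while the difference is controlled by \eqref{Linearized_soln_F_difference}. The anticipated obstacle is the $\na^2 c_{\neq}$ estimate: applying enhanced dissipation to $\pa_x\pa_y\wwt c_{\neq}$ is permitted only because $\pa_x$ commutes with $S_{t_\star}^{t_\star+t}$ on functions of $(x,y)$, but the $\pa_y$ factor does not; one must therefore re-expand $\pa_x\pa_y\wwt c_{\neq}=S_{t_\star}^{t_\star+t}\pa_x\pa_y c_{\neq}(t_\star)+$\,(commutator), and bound the commutator iteratively using the same Duhamel scheme. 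Keeping track of the weights $A^{1/6},\,A^{3/4},\,1,\,\varepsilon^{-2}$ across these iterations is what fixes the exponent $1/24$ in the final estimate.
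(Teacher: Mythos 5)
Your core computation is the same as the paper's: since $u$ depends only on $(t,y)$, the slots built from $n_\nq$, $\pa_x\cc_\nq$ and $\pa_z\cc_\nq$ are exactly reproduced by the passive-scalar propagator, so the whole discrepancy sits in $\pa_y\wwt\cc_\nq(t_\star+t)-S_{t_\star,t_\star+t}\pa_y\cc_\nq(t_\star)$, which you, like the paper, control through the commutator forcing $(\pa_y u)\pa_x\wwt\cc_\nq$ and a Duhamel/energy estimate; the factor $A^{-1/24}$ is then pure arithmetic, $A^{1/3}\cdot A^{-3/8}$, coming from the length $\delta^{-1}A^{1/3}$ of the time window and the weight $A^{3/4}$ on $\|\pa_x\cc_\nq\|_2^2$ inside $B^2$. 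Two corrections to your write-up. First, invoking Theorem \ref{Thm:Lnr_ED} for $\|\pa_x\wwt\cc_\nq\|_2$ is unnecessary: the paper only uses the trivial monotonicity $\|\pa_x\wwt\cc_\nq(t_\star+s)\|_2\le\|\pa_x\wwt\cc_\nq(t_\star)\|_2$ together with $t\le\delta^{-1}A^{1/3}$, which yields the same $A^{1/3}$ factor. More importantly, the passage to the bound on the difference of the functionals is not obtained by ``squaring'': squaring the difference bound would give $A^{-1/12}$, whereas the stated $A^{-1/24}$ comes from the cross term, $\lf|\|\pa_y\wwt\cc_\nq\|_2^2-\|S\pa_y\cc_\nq\|_2^2\rg|\le\lf(2\|S\pa_y\cc_\nq\|_2+\|\mathrm{diff}\|_2\rg)\|\mathrm{diff}\|_2\lesssim B\cdot\delta^{-1}A^{-1/24}\|\pa_y u\|_\infty B$, using $\|S\pa_y\cc_\nq\|_2\le\|\pa_y\cc_\nq(t_\star)\|_2\le B$.

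Second, the entire $\na^2\cc_\nq$ discussion is superfluous and, as you yourself flag, left unresolved. In the functional actually used, the second-order slot is $\ep^{-2}\|e^{-\delta_0 t_\star/(4A^{1/3})}\na^2\cc_{\mathrm{in};\nq}\|_2^2$: it is built from the fixed initial data, is identical for both arguments appearing in \eqref{Linearized_soln_F_difference}, and therefore cancels from the difference. No second-order commutator with forcing $-2(\pa_y u)\pa_x\pa_y\wwt\cc_\nq-(\pa_y^2 u)\pa_x\wwt\cc_\nq$ and no iteration is needed, nor does this ballast play any role in ``fixing'' the exponent $1/24$. Your triangle-inequality derivation of \eqref{Linearized_soln_G} from \eqref{Linearized_soln_F_difference} and the $L^2$-contractivity of the propagator matches the paper's final step. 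With the cross-term estimate replacing the squaring step and the $\na^2$ paragraph deleted, your argument is the paper's proof.
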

\begin{proof}
We estimate the difference using the facts that $S_{t_\star,t_\star+t}\{n_\nq, \pa_x \cc_\nq, \pa_{z }\cc_\nq\}=\{\wwt n_\nq,\pa_x \wwt \cc_\nq, \pa_{z } \wwt \cc_\nq\}$, 
\begin{align}
|\mathcal{F} [S_{t_\star,t_\star+t}&   n_{\neq}, S_{t_\star,t_\star+t}  \na \cc_{\neq}]- \mathcal{F}[\wwt n_{\neq},\na\wwt \cc_{\neq}](t_\star+t)|\\
=&A^{1/6}(\|S_{t_\star,t_\star+t} n_{\neq}\|_2^2-\|\wwt n_{\neq}\|^2_2)+(\|S_{t_\star,t_\star+t}  \na_Y    \cc_{\neq}\|_2^2-\|\na_Y   \wwt \cc_{\neq}\|_2^2)+A^{3/4}(\|
S_{t_\star,t_\star+t}\pa_x  \cc_{\neq}\|_2^2-\|\pa_x\wwt \cc_{\neq}\|^2_2)\\
\leq&\left(2\|S_{t_\star,t_\star+t}  \pa_{y }  \cc_{\neq}\|_2+\|S_{t_\star,t_\star+t}  \pa_{y }  \cc_{\neq}-\pa_{y }\wwt \cc_{\neq}\|_2\right)\|S_{t_\star,t_\star+t}  \pa_{y }  \cc_{\neq}-\pa_{y } \wwt \cc_{\neq}\|_2.\label{F_diff_wt_nc_wwt_ nc}
\end{align}
Applying the H\"older inequality and the Young's inequality, we obtain that
\begin{align}
\frac{1}{2}\frac{d}{dt}&\|\pa_{y } \wwt \cc_{\neq}-S _{t_\star,t_\star+t}\pa_{y }  \cc_{\neq}\|_2^2 
\leq \frac{1}{A^{3/8}}\|\pa_{y } \wwt \cc_{\neq}-S _{t_\star,t_\star+t}\pa_{y }  \cc_{\neq}\|_2\|\pa_{y }u\|_{L_{t,y}^\infty}  A^{3/8}\|\pa_x\wwt \cc_{\neq}\|_2 . 
\end{align}
Hence,
\begin{align}
\frac{d}{dt}&\|\pa_{y } \wwt \cc_{\neq}-S _{t_\star,t_\star+t}\pa_{y }  \cc_{\neq}\|_2\leq \frac{1}{A^{3/8}}\|\pa_{y }u\|_{L_{t,y}^\infty}  A^{3/8}\|\pa_x\wwt \cc_{\neq}\|_2 . 
\end{align} Integrating the above differential inequality and applying the definition of $\mathcal{F}$ \eqref{defn_F} and the fact that 
$\|\pa_x \wwt \cc_{\neq}(t_\star+t)\|_2\leq \|\pa_x \wwt \cc_{\neq}(t_\star)\|_2$, we obtain that the following estimate
\begin{align}
\|\pa_{y } &\wwt \cc_{\neq}(t_\star+t)-S_{t_\star,t_\star+t}\pa_{y } \cc_{\neq}(t_\star)\|_2\leq \frac{1}{\delta A^{1/24}}\|\pa_{y }u\|_{L^\infty}\sqrt{\mathcal{F}[ n_{\neq}(t_\star),\na \cc_\nq(t_\star)]},\quad \forall t\in[0,\delta^{-1}A^{1/3}].
\end{align} 
Combining this estimate with  \eqref{F_diff_wt_nc_wwt_ nc} and the fact that $\|S_{t_\star,t_\star+t}\pa_{y }\cc_\nq\|_2^2\leq \|\pa_{y }\cc_\nq(t_\star)\|_2^2\leq\mathcal{F}[n_\nq(t_\star),\cc_\nq(t_\star)]$ yields the result.  
\end{proof}
In the last step, we focus on the actual nonlinear system \eqref{ppPKS_nqmd}. Here we incorporate the nonlinear interaction  term $\na\cdot(n_{\neq}(\na \cc_{\neq}+\na d_{\neq}))$.
Here we will compare the solutions $n_{\neq},\, \cc_{\neq}$ to the solutions $\wwt n_{\neq}, \, \wwt \cc_{\neq}$ of  \eqref{linearized_system}. 
\ifx
\begin{align*}
\pa_t \wt{\wt n}_{\neq}=\frac{1}{A}\de \wt{\wt{n}}_{\neq}-u(y)\pa_x \wt{\wt n}_{\neq}-\frac{1}{A}\na \cdot(\na \wt{\wt c}_{\neq} \nz +\na \cz \wt{\wt{n}}_{\neq}),\quad \pa_t \wt{\wt c}_{\neq} = \text{the same as before}
\end{align*}
and the solution $n_{\neq},\cc_{\neq}$.
\fi
\begin{proof}[Proof of \eqref{ConED_L2}]
We divide the proof into three steps. 

\noindent
\textbf{Step \# 1: Reformulations of \eqref{Decay_estimate_F_nonlinear} and \eqref{Regularity_estimate_F_nonlinear}.} 
First of all, we observe that the passive scalar solutions $\{S_{t_\star,t_\star+t} \{n_\nq,\na \cc_\nq\}$ satisfy the estimates
\begin{align}
\mathcal{F}[S_{t_\star,t_\star+ \delta^{-1} A^{1/3} }  n_{\neq}(t_\star),S_{t_\star,t_\star+ \delta^{-1} A^{1/3} } \na  \cc_{\neq}(t_\star) ]\leq&\frac{1}{8}\mathcal{F}[n_{\neq}(t_\star), \na  \cc_{\neq}(t_\star) ]; \label{F_wt_nc_decay}
\\
\mathcal{F}[S _{t_\star,t_\star+t}  n_{\neq}(t_\star),S_{t_\star,t_\star+t} \na  \cc_{\neq}(t_\star)]\leq&\mathcal{F}[ n_{\neq}(t_\star), \na   \cc_{\neq}(t_\star)], \quad\forall t\in [0,\delta^{-1}A^{1/3}].\label{F_wt_nc_regularity}
\end{align}
 We define the difference
\begin{align}
\mathbb{N}_{\neq}:= n_{\neq}-\wwt {n}_{\neq};\,\,
\mathbb{C}_{\neq}:=\cc_{\neq}-\wwt {c}_{\neq}.
\end{align}
Then the equations for $\mathbb{N}_{\neq}$ and $
\mathbb{C}_{\neq}$ have the following form:
\begin{align}
\pa_t \mathbb{N}_{\neq}
=&\frac{1}{A}\de \mathbb{N}_{\neq}-u(t,y )\pa_x \mathbb{N}_{\neq}-\frac{1}{A}\na\cdot\left((\na \mathbb{C}_{\neq}+\na \wwt \cc_{\neq}) \nz +\na_{y } \cz  (\mathbb{N}_{\neq}+\wwt n_{\neq})+\left((\na \cc_{\neq} +\na d_{\neq})n_{\neq}\right)_{\neq}\right)
\label{N_eq}\\
\pa_t \mathbb{C}_{\neq}=&\myb{\frac{1}{A}\de \mathbb{C}_{\neq}-u(t,y )\pa_x 
\mathbb{C}_{\neq}+\frac{1}{A} n_{\neq}=}\frac{1}{A}\de \mathbb{C}_{\neq}-u(t,y )\pa_x \mathbb{C}_{\neq}+\frac{1}{A} (\mathbb{N}_{\neq}+\wwt n_{\neq}).\label{C_eq}
\end{align}
Since the linearized solutions $\{\wwt n_\nq,\wwt \cc_\nq\}$ are identical to the real solutions $\{n_\nq,\cc_\nq\}$ at instance $t=t_\star$,  the differences $\mathbb{N}_{\neq}$ and $\mathbb{C}_{\neq}$ are zero at $t_\star$, i.e., 
\begin{align}
\mathbb{N}_{\neq}(t_\star)=n_{\neq}(t_\star)-\wwt {n}_{\neq}(t_\star)=0,\quad 
\mathbb{C}_{\neq}(t_\star)=\cc_{\neq}(t_\star)-\wwt {c}_{\neq}(t_\star)=0. \label{Initial_data_for_NC}
\end{align}
To derive the regularity estimate \eqref{Regularity_estimate_F_nonlinear} and the decay estimate \eqref{Decay_estimate_F_nonlinear} for the nonlinear solution $n_{\neq},\, \na \cc_{\neq}$, it is enough to show that
\begin{align}\label{Goal_F_comparison}
 \mathcal{G}[\mathbb{N}_{\neq}(t_\star+t), \na 
\mathbb{C}_{\neq}(t_\star+t)]\leq C\left(\frac{1}{\delta A^{1/12}}+ \ep^2\right)B^2,\quad \forall t\in[0, \delta^{-1}A^{1/3}],
\end{align} where $B$ is defined in \eqref{defn_B_B_1}.
\myc{Can we just use one $\delta$?}
The explicit justification is as follows. To derive \eqref{Regularity_estimate_F_nonlinear}, we decompose the functional as follows
\begin{align*}
\mathcal{F}&[n_\nq (t_\star+t),\na \cc_\nq(t_\star+t)]\\
\leq&\mathcal{F}[S_{t_\star,t_\star+t}n_\nq, S_{t_\star, t_\star+t}\na \cc_\nq]+\big|\mathcal{F}[S_{t_\star,t_\star+t}n_\nq, S_{t_\star, t_\star+t}\na \cc_\nq]-\mathcal{F}[\wwt n_\nq(t_\star+t),\na \wwt c _\nq(t_\star+t)]\big|\\
&+\big|\mathcal{F}[n_\nq(t_\star+t),\na \cc_\nq(t_\star+t)] -\mathcal{F}[\wwt n_\nq(t_\star+t),\na \wwt c _\nq(t_\star+t)]\big|.
\end{align*}
Combining estimates \eqref{F_wt_nc_regularity}, \eqref{Linearized_soln_F_difference} and \eqref{Linearized_soln_G}, one obtains that 
\begin{align*}
\mathcal{F}&[n_\nq (t_\star+t),\na \cc_\nq(t_\star+t)]\\
\leq& B^2+C\frac{1+\|\pa_{y }u\|_\infty^2}{\delta^2A^{1/24}} B^2+\bigg(A^{1/6}\big\\|n_\nq\|_2^2-\|\wwt n_\nq\|_2^2\big|+A^{3/4}\big\\|\pa_x \cc_\nq\|_2^2-\|\pa_x \wwt \cc_{\neq}\|_2^2\big|+\big\|\na_Y   \cc_\nq\|_2^2-\|\na_Y   \cc_\nq\|_2^2\big|
\bigg)\\
\leq &B^2+C\frac{1+\|\pa_{y }u\|_\infty^2}{\delta^2A^{1/24}} B^2+C\left(\mathcal{G}^{1/2}[\wwt n_\nq,\na\wwt \cc_\nq]+\mathcal{G}^{1/2}[\mathbb{N}_\nq,\na \mathbb{C}_\nq]\right)\mathcal{G}^{1/2}[\mathbb{N}_\nq,\na \mathbb{C}_\nq]\\
\leq &2B^2+C\frac{1+\|\pa_{y }u\|_\infty^2}{\delta^2A^{1/24}} B^2+C\left(\frac{1}{\delta A^{1/12}}+ \ep^2\right)B^2.
\end{align*}
By taking $\ep^{-1}$ and $A$ large enough depending on $\delta,\,\|\pa_{y }u\|_\infty$ and constants, we have obtained \eqref{Regularity_estimate_F_nonlinear}. For the estimate \eqref{Decay_estimate_F_nonlinear}, one can proceed in a similar manner with \eqref{F_wt_nc_regularity} replaced by \eqref{F_wt_nc_decay}. We omit further details for the sake of brevity. This concludes step \# 1. 
 
\noindent
\textbf{Step \# 2: Proof of the estimate \eqref{Goal_F_comparison}.} We estimate the size of the deviation $\mathbb{N}_{\neq}$ and $\na \mathbb{C}_{\neq}$. 

Now we start by estimating the time evolution of the $L^2$-norms of  the differences $
\mathbb{N}_{\neq}$  
\begin{align}
\frac{1}{2}\frac{d}{dt}\|
\mathbb{N}_{\neq}\|_2^2\leq& -\frac{1}{8A}\|\na 
\mathbb{N}_{\neq}\|_2^2+ \frac{C}{A}\|\na 
\mathbb{C}_{\neq}\|_2^2\|\nz \|_\infty^2+\frac{C}{A}\|\na_Y   \cz \|_\infty^2\|
\mathbb{N}_{\neq}\|_2^2\\
&+\frac{C}{A}\left(\|\na \cc_{\neq}\|_\infty^2\|n_{\neq}\|_2^2+\|\na \wwt \cc_{\neq}\|_2^2\|\nz \|_\infty^2+\|\na_{y } \cz \|_\infty^2\|\wwt n_{\neq}\|_2^2\right).
\end{align}
Now we recall the estimate of the linearized solutions   \eqref{Linearized_soln_F_difference} and the hypotheses \eqref{Hypotheses}, and estimate the time evolution as follows
\begin{align}
\frac{1}{2}&\frac{d}{dt}\|\mathbb{N}_{\neq}\|_2^2\\
\leq&-\frac{1}{8A}\|\na \mathbb{N}_{\neq}\|_2^2+ \frac{C}{A}\|\na \mathbb{C}_{\neq}\|_2^2\|\nz \|_\infty^2+\frac{C}{A}\|\na_Y   \cz \|_\infty^2\|\mathbb{N}_{\neq}\|_2^2\\
&+ {\frac{C}{A}(\|\na \cc_{\neq}\|_\infty^2+\|\na d_{\neq}\|_\infty^2)(\|n_{\neq}-\wwt n_{\neq}\|_2^2+\|\wwt n_{\neq}\|_2^2})+\frac{C}{A}(\|\nz \|_\infty^2+\|\na_Y   \cz \|_2^2)(\|\wwt n_{\neq}\|_2^2+\|\na \wwt \cc_{\neq}\|_2^2)\\
\leq&-\frac{1}{8A}\|\na \mathbb{N}_{\neq}\|_2^2+ \frac{C(C_{n;L^\infty})}{A}\|\na 
\mathbb{C}_{\neq}\|_2^2 \\
&+\left(\frac{C(C_{\na_Y   \cz ;L^\infty},C_{\na \cc_{\neq};L^\infty})}{A}+\frac{\|\na ^2 c_{\mathrm{in};\neq}\|_2^2}{A}(t_\star+t)^2e^{-\delta_0 \frac{t+t_\star}{A^{1/3}}}\right)\|\mathbb{N}_{\neq}\|_2^2\\
&+\frac{1}{A}C(C_{\na \cc_{\neq};L^\infty},C_{\na \cz ;L^\infty},C_{n;L^\infty} )\|\wwt n_{\neq}(t_\star+t)\|_2^2+\frac{\|\na^2 c_{\mathrm{in};\neq}\|_2^2}{A}(t_\star+t)^2e^{-\delta_0 \frac{t+t_\star}{A^{1/3}}}\|\wwt n_{\neq}(t_\star+t)\|_2^2\\
\leq&-\frac{1}{8A}\|\na \mathbb{N}_{\neq}\|_2^2+ \frac{C(C_{n;L^\infty})}{A}\|\na \mathbb{C}_{\neq}\|_2^2 +\left(\frac{C(C_{\na_Y   \cz ;L^\infty}, C_{\na \cc_{\neq};L^\infty}))}{A}+\frac{C\ep^2}{A}(t_\star+t)^2e^{-\delta_0\frac{t_\star+t}{A^{1/3}}}\right)\|
\mathbb{N}_{\neq}\|_2^2\\
&+\left(\frac{C(C_{\na \cc_{\neq};L^\infty}, C_{\na \cz ;L^\infty}, C_{n;L^\infty})}{A}+\frac{C\ep^2}{A}(t_\star+t)^2e^{-\delta_0\frac{t_\star+t}{A^{1/3}}}\right)\mathcal {F}[n_{\neq}(t_\star),\na \cc_{\neq}(t_\star)]A^{-1/6}.\label{N_eq_est}
\end{align}

Next we estimate the time evolution of each component in $\mathcal{G}[
\mathbb{N}_{\neq},\mathbb{C}_{\neq}]$ involving the difference $\mathbb{C}_{\neq}$ as follows:
\begin{align}
\frac{1}{2}\frac{d}{dt}A^{3/4}\|\pa_x \mathbb{C}_{\neq}\|_2^2\leq &-\frac{1}{4A^{1/4}}\|\na \pa_x \mathbb{C}_{\neq}\|_2^2+\frac{1}{A^{3/8}}CA^{1/8}\|\mathbb{N}_{\neq}\|_2^2+\frac{1}{A^{3/8}}CA^{1/8}\|\wwt{n}_{\neq}\|_2^2;\label{C_eq_est1}\\
\frac{1}{2}\frac{d}{dt}\|\na_Y   \mathbb{C}_{\neq}\|_2^2
\leq &-\frac{1}{4A}\|\na \na_Y   \mathbb{C}_{\neq}\|_2^2+\|\pa_{y }u\|_\infty \frac{1}{A^{3/8}}\|A^{3/8}\pa_x \mathbb{C}_{\neq}\|_2\|\na_Y   \mathbb{C}_{\neq}\|_2\\
 &+\frac{1}{A}C\|\mathbb{N}_{\neq}\|_2^2+\frac{1}{A}C\|\wwt n_{\neq}\|_2^2\\
\leq &-\frac{1}{4A}\|\na \na_Y   \mathbb{C}_{\neq}\|_2^2+\frac{1}{A^{3/8}}\|A^{3/8}\pa_x\mathbb{C}_{\neq}\|_2^2+\frac{1}{A^{3/8}}\|\pa_{y }u\|_\infty^2\|\na_Y  \mathbb{C}_{\neq}\|_2^2\\
&+ \frac{C}{A}\|\mathbb{N}_{\neq}\|_2^2+\frac{C}{A}\|\wwt n_{\neq}\|_2^2.\label{C_eq_est2}
\end{align}
Combining \eqref{N_eq_est}, \eqref{C_eq_est1}, and \eqref{C_eq_est2}, we choose $A$ large enough such that 
\begin{align}
\frac{d}{dt}&\left(A^{1/6}\|\mathbb{N}_{\neq}\|_2^2+A^{3/4}\| \pa_x \mathbb{C}_{\neq}\|_2^2+\|\na_Y   \mathbb{C}_{\neq}\|_2^2\right)\\
\leq& \frac{C}{A^{3/8}}\left(A^{1/6}\|\mathbb{N}_{\neq}\|_2^2+A^{3/4}\|\pa_x \mathbb{C}_{\neq}\|_2^2+\|\na_Y   \mathbb{C}_{\neq}\|_2^2\right)+\frac{C\ep^2}{A}(t_\star+t)^2e^{-\delta\frac{t_\star+t}{A^{1/3}}}(A^{1/6} \|\mathbb{N}_{\neq}\|_2^2)\\
&+\left(\frac{C }{A^{5/12 }}+\frac{C\ep^2}{A}(t_\star+t)^2e^{-\delta\frac{t_\star+t}{A^{1/3}}}\right)B^2.
\end{align}
Integrating the above differential inequality on $[0,\delta^{-1}A^{1/3}]$ yields that
\begin{align}
A^{1/6}\|\mathbb{N}_{\neq}(t)\|_2^2+A^{3/4}\|\pa_x \mathbb{C}_{\neq}(t)\|_2^2+\|\na_Y   \mathbb{C}_{\neq}(t)\|_2^2\leq \exp\left\{\frac{C}{\delta A^{1/ 24}}+C\ep^2\right\}\left(\frac{C}{\delta A^{1/12}}+C\ep^2\right)B^2,\quad \forall t\in[0,\delta^{-1}A^{1/3}].
\end{align}
Now if $A$ and $\ep$ are small enough,  we have \eqref{Goal_F_comparison}. 
Hence the enhanced dissipation of $\mathcal{F}$ is achieved. 

\noindent
\myr{\textbf{Step \# 3: Proof of \eqref{ConED_L2}. (Check!?)}
For $t\in[0,\delta^{-1}A^{ 1/3}|\log A|^2)$
\begin{align}
\frac{d}{dt}&\frac{1}{2}\|n_{\neq}\|_2^2\\
\leq &-\frac{1}{2A}\|\na n_{\neq}\|_2^2+\frac{C}{A}(\|\na \cc_{\neq}\|_\infty^2+\|\na d_\nq\|_\infty^2)\|n_{\neq}\|_2^2+\frac{C}{A}\|\na_Y   \cz \|_\infty^2\|n_{\neq}\|_2^2+\frac{C}{A}(\|\na \cc_{\neq}\|_2^2+\|\na d_\nq\|_2^2)\|\nz \|_\infty^2\\
\leq &\frac{C(\CC)}{A}(\|n_{\neq}\|_2^2+1) +\frac{C( C_{n;L^\infty}) \ep^2 t^2e^{-\delta_0t/A^{1/3}}}{A}(\|n_{\neq}\|_2^2+1).
\end{align}
Hence for $A,\, \ep^{-2}$ chosen large enough compared to $\delta_0^{-1}, \CC$, we have $\|n_{\neq}(t)\|_2^2\leq 2\|n_{\mathrm{in};\nq}\|_2^2+2$ for $\forall t\in[0,\delta^{-1}A^{1/3}|\log A|^2]$.  
For $\|\na \cc_{\neq}\|_2$, we invoke the hypothesis \eqref{Hyp_pa_x_n_Linf} to obtain that 
\begin{align}
\frac{1}{2}\frac{d}{dt} \|\pa_x \cc_{\neq}\|_{2}^{2}\leq \frac{1}{A}\|\pa_x n\|_{2}\|\pa_x \cc_{\neq}\|_{2}\leq \frac{1}{A}C_{\pa_x n,\infty}\|\pa_x \cc_{\neq}\|_{2}.
\end{align}
Hence we have that $\|\pa_x \cc_\nq(t)\|_2\leq {\frac{t}{A}C_{\pa_x n,\infty}}$.
Next we estimate the $\pa_{z } \cc_\nq (t)$,
\begin{align}
\frac{1}{2}\frac{d}{dt}\|\pa_{z } \cc_\nq\|_2^2\leq \frac{1}{A}\|n_\nq\|_2^2.
\end{align} Hence on the time interval $[0, \delta^{-1} A^{1/3}|\log A|^2]$, the $\|\pa_{z } \cc_\nq\|_2^2$ is small as long as $A$ is chosen large enough compared to $C_{n;L^\infty}$. Finally, we estimate $\|\pa_{y }\cc_\nq\|_2^2$. Direct energy estimate yields that 
\begin{align}
\frac{1}{2}\frac{d}{dt}\|\pa_{y }\cc_\nq\|_2^2\leq &-\frac{1}{2A}\|\na \pa_{y } \cc_\nq\|_2^2+\|\pa_{y } u\|_\infty\|\pa_x \cc_\nq\|_2\|\pa_{y }\cc_\nq\|_2+\frac{\|n_{\neq}\|_2^2}{A} \\
\leq&\|\pa_{y }u\|_\infty \frac{t}{A} C_{\pa_x n;L^\infty}\|\pa_{y }\cc_\nq\|_2+\frac{1}{A}C_{n;L^\infty}^2 .
\end{align}  By taking $A$ large compared to $\|\pa_{y }u\|_\infty,\, C_{\pa_x n,\infty},\, C_{n;\infty}$, we have that $\|\pa_{y }\cc_\nq(t)\|_2\leq 2$ for $\forall t\in[0,\delta^{-1}A^{1/3}|\log A|^2]$.   
Now combining the enhanced dissipation with the energy estimate of the $L^2$ norm on $t\in[0,\delta^{-1}A^{ 1/3}|\log A|^2)$, we have \eqref{ConED_L2} with a slightly different $\delta$?. }
\end{proof}
\fi
\appendix
\section{}
\begin{lem}\label{Lem:nacest} 
Consider solution to the heat equation on $\Torus^d,\, d=1,2$,
\begin{align}
\pa_t \mathfrak{c}=\frac{1}{A}\de\mathfrak{c}+\frac{1}{A}\lf(\mathfrak{n}-\overline{\mathfrak n}\rg),\quad \mathfrak{c}(t=0, Y)= \mathfrak{c}_{\mathrm{in}}(Y),\quad \int_{\Torus^d} \mathfrak{c}_{\mathrm{in}}dY=0.\label{Heat_eq}
\end{align}
\myc{(Double check whether the formulation is correct for $\lan \cc\ran, \lan\lan\cc\ran\ran$!)} Then the following estimate holds for$\quad \frac{1}{q}<\frac{1}{d}+\frac{1}{p},\,1\leq q\leq p\leq \infty$,
\begin{align}\label{na_c_0_est}
\|\na^{m+1}  \mathfrak c(t)\|_{L^p(\Torus^d)}\leq&\frac{C}{A}\int_{0}^t\lf (\frac{t-\tau}{A}\rg)^{-\frac{1}{2}-\frac{d}{2}(\frac{1}{q}-\frac{1}{p})} e^{- \frac{t-\tau}{3A}}\|\na^m(\mathfrak n(\tau)-\overline{\mathfrak n})\|_{L^q(\Torus^d)}d\tau+\|\na^{m+1}\mathfrak c_{\mathrm{in}}\|_{L^p(\Torus^d)}\\
\leq & C\min\lf\{1,\lf(\frac{t}{A}\rg)^{\frac{1}{2}-\frac{d}{2}(\frac{1}{q}-\frac{1}{p})}\rg\}\sup_{0\leq\tau\leq t}\|\na^m(\mathfrak n(\tau)-\overline{\mathfrak n})\|_{L^q(\Torus^d)}+\|\na^{m+1}\mathfrak c_{\mathrm{in}}\|_{L^p(\Torus^d)}.
\end{align}
\end{lem}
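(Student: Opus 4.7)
\medskip

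\noindent\textbf{Proof proposal for Lemma \ref{Lem:nacest}.}

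The plan is to represent $\mathfrak{c}$ via Duhamel's formula and then apply standard heat-semigroup smoothing estimates on the torus, exploiting the mean-zero condition for the exponential decay factor $e^{-(t-\tau)/(3A)}$. Write
\begin{align}
\mathfrak{c}(t)=e^{t\Delta/A}\mathfrak{c}_{\mathrm{in}}+\frac{1}{A}\int_0^t e^{(t-\tau)\Delta/A}\bigl(\mathfrak{n}(\tau)-\overline{\mathfrak{n}}\bigr)\,d\tau,
\end{align}
apply $\nabla^{m+1}$, and commute it with the semigroup. The contribution of the initial datum is handled by the $L^p$--contraction property of the heat semigroup, which gives $\|\nabla^{m+1}e^{t\Delta/A}\mathfrak{c}_{\mathrm{in}}\|_{L^p}\leq \|\nabla^{m+1}\mathfrak{c}_{\mathrm{in}}\|_{L^p}$.

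For the Duhamel piece, I would rewrite $\nabla^{m+1}e^{(t-\tau)\Delta/A}(\mathfrak{n}-\overline{\mathfrak{n}})=\nabla\,e^{(t-\tau)\Delta/A}\,\nabla^{m}(\mathfrak{n}-\overline{\mathfrak{n}})$, noting that $\nabla^m(\mathfrak{n}-\overline{\mathfrak{n}})$ is automatically mean-zero on $\Torus^d$ (it equals $\nabla^m\mathfrak{n}$ for $m\geq 1$, and is mean-zero by construction for $m=0$). The key ingredient is then the mean-zero heat smoothing bound: for any mean-zero $g\in L^q(\Torus^d)$ with $1\leq q\leq p\leq\infty$ and $1/q-1/p<1/d$,
\begin{align}
\|\nabla e^{s\Delta}g\|_{L^p(\Torus^d)}\leq C\,s^{-\tfrac{1}{2}-\tfrac{d}{2}(\tfrac{1}{q}-\tfrac{1}{p})}\,e^{-s/3}\,\|g\|_{L^q(\Torus^d)}.
\end{align}
This is proved by the semigroup splitting $e^{s\Delta}=e^{(s/2)\Delta}\circ e^{(s/2)\Delta}$: the first factor contributes the short-time $L^q\to L^p$ smoothing $(s/2)^{-1/2-d/2(1/q-1/p)}$ through the classical heat kernel bound on $\Torus^d$, while the second factor, acting on a mean-zero function, contributes the exponential decay $e^{-s/2}$ from the spectral gap $\lambda_1=1$ of $-\Delta$ on $\{g:\overline{g}=0\}$ (with $2\pi$-periodic box). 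Adjusting the split suitably yields the exponent $1/3$ in the statement.

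Substituting $s=(t-\tau)/A$ into this bound and inserting it into the Duhamel integral gives the first line of \eqref{na_c_0_est}. For the second line, perform the change of variables $\sigma=(t-\tau)/A$ to reduce the prefactor to $\int_0^{t/A}\sigma^{-1/2-d/2(1/q-1/p)}e^{-\sigma/3}d\sigma$, then split into $\sigma\leq 1$ and $\sigma\geq 1$: the small-$\sigma$ regime is integrable (hence bounded by $(t/A)^{1/2-d/2(1/q-1/p)}$) thanks to the strict inequality $1/q-1/p<1/d$, while the large-$\sigma$ tail is controlled by a universal constant via the exponential factor, yielding the stated $\min\{1,(t/A)^{1/2-d/2(1/q-1/p)}\}$. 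The only delicate point is obtaining the sharp exponential rate; as long as one is willing to lose a constant factor in the exponent, the semigroup splitting above suffices and no refined analysis is needed.
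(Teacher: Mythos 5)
Your proposal is correct and follows essentially the same route as the paper: Duhamel's formula, commuting the derivatives with the semigroup, an $L^q\to L^p$ gradient smoothing bound with spectral-gap exponential decay for mean-zero data (the paper splits the time axis at $t=2A$ rather than splitting the semigroup at $s/2$, but this is the same device), and the same change of variables plus short-time/long-time splitting of the Duhamel integral to produce the $\min\{1,(t/A)^{1/2-\frac{d}{2}(1/q-1/p)}\}$ factor. No gaps.
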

\begin{proof}
The proof is similar to the proof of heat semigroup bounds in \cite{Winkler10}. 
Since the spacial derivatives commute with the differential equation \eqref{Heat_eq}, it is enough to derive the estimate \eqref{na_c_0_est_1} for $m=0$. Consider the heat equation 
\begin{align}
\pa_t h=\frac{1}{A}\de h, \quad h(t=0,X)=h_{\text{in}}(X),\quad  \overline{h_{\text{in}}}=0,\quad X\in\Torus^d.
\end{align}
Our first goal is to derive the following estimates

\noindent
a) If $d=1,2,\quad 1\leq q\leq p\leq \infty$, then 
\begin{align}\label{Winklersgp1}
\|e^{\frac{t}{A}\de_{\Torus^d }}h_{\text{in}}\|_p\leq C \left(\frac{t}{A}\right)^{-\frac{d}{2}(\frac{1}{q}-\frac{1}{p})} e^{- \frac{t}{2A}}\|h_{\text{in}}\|_q,\quad \forall t>0,
\end{align} 
holds for all $h_{\text{in}}\in L^q$ with $\overline{h_{\text{in}}}=0$;

\noindent
b) If $d=1,2,\quad 1\leq q\leq p \leq \infty$, then 
\begin{align}\label{Winklersgp2}
\|\na e^{\frac{t}{A}\de_{\Torus^d }}h_{\text{in}}\|_p\leq C \left(\frac{t}{A}\right)^{-\frac{1}{2}-\frac{d}{2}(\frac{1}{q}-\frac{1}{p})} e^{-\frac{t}{3A}}\|h_{\text{in}}\|_q,\quad \forall t>0.
\end{align}  

Application of a standard energy estimate yields that 
\begin{align}
\|e^{\frac{t}{A}\de}h_{\text{in}}\|_{L^2(\Torus^d)}\leq e^{-\frac{ t}{A} }\|h_{\text{in}}\|_{L^2(\Torus^d)},\quad \forall t\geq 0.\label{heat_L2L2}
\end{align} 

On the other hand, we recall the explicit formula for the Green's function of the heat semigroup
\begin{align}e^{\frac{t}{A}\de}h_\text{in}(X)=&\sum_{m_1\in \mathbb{Z}}\frac{1}{\sqrt{4\pi t/A} }\int_{X'\in \Torus}e^{-\frac{|X'+2m_1 \pi-X|^2}{4 t/A}}h_{\text{in}}(X')dX',\quad d=1;\\
e^{\frac{t}{A}\de}h_{\text{in}}(X)=&\sum_{(m_1, m_2)\in \mathbb{Z}^2}\frac{1}{4\pi t/A}\int_{X'\in \Torus^2}e^{-\frac{|X'+(2m_1 \pi, 2m_2 \pi)-X|^2}{4 t/A}}h_{\text{in}}(X')dX',\quad d=2.\label{heat_ker}
\end{align}Direct estimate yields that for $1\leq q\leq p\leq \infty$, 
\begin{align}\label{heat_Lp_Lq}
\|e^{\frac{t}{A}\de} h_{\text{in}}\|_{L_X^p(\Torus^d)}\leq& C \left(\frac{t}{A}\right)^{-\frac{d}{2}(\frac{1}{q}-\frac{1}{p})}\| h_{\text{in}}\|_{L_X^q(\Torus^d)},\quad \forall t\in[0, 2A];\\
\label{heatgradLpLq}\|\na e^{\frac{t}{A}\de} h_{\text{in}}\|_{L_X^p(\Torus^d)}\leq & C\left(\frac{t}{A}\right)^{-\frac{1}{2}}\|h_{\text{in}}\|_{L_X^p(\Torus^d)},\quad \forall t\in[0, 2A] .
\end{align}\myc{\textcolor{red}{(Check the index $??$!)}  {\bf(Check the second inequality!)} Method 1: We consider the function $|\na e^{\frac{t}{A}\de}h_{\text{in}}(X)|$ and it is bounded by the following expression. To justify the expression of the $\na e^{t\de/A}$ convergence, we can think of periodizing the $h_{\text{in}}\in L^\infty$ function to the whole $\rr^2$, then since the heat kernel decays fast near infinity, we can take the derivative on the $X$ variable, then we rewrite the expression as a sum over $(m_1,m_2)\in \mathbb{Z}^2$.)
\begin{align}
|\na e^{\frac{t}{A}\de}h_{\text{in}}(X)|\leq &\sum_{(m_1, m_2)\in \mathbb{Z}^2}\frac{1}{4\pi t/A}\int_{X'\in \Torus^2}\bigg|\na_{X} e^{-\frac{|(X'-X)+(2m_1 \pi, 2m_2 \pi)|^2}{4 t/A}}\bigg||h_{\text{in}}(X')|dX'\\
\underbrace{=}_{Fubini-Tonelli}&\frac{1}{4\pi t/A}\int_{X'\in \Torus^2}\underbrace{\sum_{(m_1, m_2)\in \mathbb{Z}^2}\bigg|\frac{(X'-X)+(2m_1 \pi, 2m_2 \pi)}{2t/A} e^{-\frac{|(X'-X)+(2m_1 \pi, 2m_2 \pi)|^2}{4 t/A}}\bigg|}_{=:F(t,A,X-X')}|h_{\text{in}}(X')|dX';\\
\|\na e^{\frac{t}{A}\de}h_{\text{in}}(\cdot)\|_{L^p(\Torus^d)}\underbrace{\leq}_{Young}&\frac{C}{\sqrt{t/A}}\lf\|\frac{1}{\pi (2 t/A)^{3/2}}F(t,A,\cdot)\rg\|_{L^1}\|h_{\text{in}}\|_{L^p(\Torus^d)}\leq \frac{C}{\sqrt{t/A}} \|h_{\text{in}}\|_{L^p}.
\end{align}
Method 2: 
In this paper, we just use the $L^2$ version of the theorem. Hence we can directly implement a Fourier proof:
\begin{align}
\|\na e^{\frac{t}{A}\de} h_{\mathrm{in}}\|_{L^2(\Torus^d)}^2\leq& C\sum_{k\in \mathbb{Z}^d\backslash 0^d}|k|^2 e^{-\frac{t}{A}|k|^2}|\wh{h_{\mathrm{in}}}(k)|^2\leq \frac{CA }{t}\sum_{k\in \mathbb{Z}^d\backslash 0^d}\frac{t|k|^2}{A} e^{-\frac{t}{A}|k|^2}|\wh{h_{\mathrm{in}}}(k)|^2\\
\leq&  \frac{CA }{t}\sum_{k\in \mathbb{Z}^d\backslash 0^d}|\wh{h_{\mathrm{in}}}(k)|^2\leq C\lf(\frac{t}{A}\rg)^{-1}\|h_{\mathrm{in}}\|_{L^2(\Torus^d)}^2.
\end{align} Now $\|\na e^{t\de/A}\|_{L^2\rightarrow L^2}\leq C(t/A)^{-1/2}$. Then we can say that from the expression of $\na e^{t\de/A}$, the $L^\infty-L^\infty$ semigroup estimate has bound $C(\frac{t}{A})^{-1/2}$ (No convolution structure is needed). Now an operator interpolation estimate yields the semigroup estimate for $p\in [2,\infty].$ Then we can use the duality to get the $p\in[1,2)$: Let $p^{-1}+(p')^{-1}=1, \, p'>2$ and $f,g\in C^\infty(\Torus^d), \, \overline{f}=\overline{g}=0$, then 
\begin{align}\|\na e^{t\de/A} f\|_{L^p}=\sup_{\|g\|_{p'}=1}
\int_{\Torus^d} \na e^{t\de/A}f  g dV=\sup_{\|g\|_{p'}=1}
\int_{\Torus^d}f   \na e^{t\de/A}g dV\leq \|f\|_{p}\sup_{\|g\|_{p'}=1}\| \na e^{t\de/A}g\|_{p'}\leq C\|f\|_{p}(t/A)^{-1/2}.
\end{align} 
Hence we have obtained the estimate.}
For $p\in[1,2),\, q\in[1, p]$, application of the H\"older inequality and then $\eqref{heat_Lp_Lq}_{p=2}$ yields that
\begin{align}
\|h(t)\|_{L^p(\Torus^d)}\leq C\|h(t)\|_{L^2(\Torus^d)}\leq C e^{-\frac{t-A}{A}}\|h(A)\|_{L^2}\leq Ce^{-(\frac{t}{A}-1)}\|h_{\text{in}}\|_{L^q},\quad\forall t\geq 2A. 
\end{align}
If $p\in [2,\infty],\,  q\in[1, p]$, we have that by the semigroup property of $e^{\frac{t}{A}\de}$ and the estimates $\eqref{heat_Lp_Lq}_{q=2}$, \eqref{heat_L2L2}, 
\begin{align}
\|h(t)\|_{L^p(\Torus^d)}=&\|e^{\frac{A}{A}\de}h(t-A)\|_{L^p(\Torus^d)}\leq C\|h(t-A)\|_{L^2(\Torus^d)}=C\|e^{\frac{t-2A}{A}\de}h(A)\|_{L^2(\Torus^d)}\\
\leq &Ce^{-\frac{t-2A}{A}}\|h(A)\|_{L^2(\Torus^d)},\quad \forall t\geq 2A.
\end{align}
Now we apply the H\"older inequality ($q\leq 2$) or the estimate $\eqref{heat_Lp_Lq}_{p=2}$ ($q> 2$) to obtain
\begin{align}
\|h(t)\|_{L^p(\Torus^d)}\leq Ce^{- \frac{t}{A} }\|h_{\text{in}}\|_{L^q(\Torus^d)},\quad \forall t\geq 2A.
\end{align} Combining this estimate with \eqref{heat_Lp_Lq}, we have that for $1\leq q\leq p\leq \infty$, 
\begin{align}
\|e^{\frac{t}{A}\de}h_{\text{in}}\|_p\leq C\left(\left(\frac{t}{A}\right)^{-\frac{d}{2}(\frac{1}{q}-\frac{1}{p})}\mathbbm{1}_{t\leq 2A} +e^{- \frac{t}{A}}\right)\|h_{\text{in}}\|_q\leq C\left(\frac{t}{A}\right)^{-\frac{d}{2}(\frac{1}{q}-\frac{1}{p})}e^{- \frac{t}{2A}}\|h_{\text{in}}\|_q,\quad \forall t>0.
\end{align}
Hence the proof of \eqref{Winklersgp1} is completed. 

We proceed to prove \eqref{Winklersgp2}. 
For $t\leq 2A$, we combine the semigroup property and the estimates \eqref{Winklersgp1}, 
\eqref{heatgradLpLq}  to obtain the following gradient bound  
\begin{align} \label{Winklersgp21}
\|\na e^{\frac{t}{A}\de}h_{\text{in}}\|_{L^p(\Torus^d)}=&\|\na h(t)\|_{L^p(\Torus^d)}=\lf\|\na e^{\frac{t}{2A}\de}\lf( h\lf(\frac{t}{2}\rg)\rg)\rg\|_{L^p(\Torus^d)}\leq C\left(\frac{t}{2A}\right)^{-1/2}\lf\|h\lf(\frac{t}{2	}\rg)\rg\|_{L^p(\Torus^d)}\\
\leq& C\left(\frac{t}{2A}\right)^{-\frac{1}{2}}\left(\frac{t}{2A}\right)^{-\frac{d}{2}(\frac{1}{q}-\frac{1}{p})}\|h_{\text{in}}\|_{L^q(\Torus^d)}, \quad 1\leq q\leq p\leq\infty.
\end{align}
For $t\geq 2A$, we estimate the gradient with estimates  \eqref{Winklersgp1},  \eqref{heatgradLpLq} as follows:
\begin{align} \label{Winklersgp22}
\|\na e^{\frac{t}{A}\de}h_{\text{in}}\|_{L^p(\Torus^d)}=&\|\na h(t)\|_{L^p(\Torus^d)}=\|\na e^{\frac{A}{A}\de} (h(t-A))\|_{L^p(\Torus^d)}\leq C\|h({t}-{A})\|_{L^p(\Torus^d)}\\
\leq&C\left(\frac{t}{A}\right)^{-\frac{d}{2}(\frac{1}{q}-\frac{1}{p})}e^{-\frac{1}{2} (\frac{t}{A}-1)}\|h_{\text{in}}\|_{L^q(\Torus^d)}.
\end{align}
Combinbing \eqref{Winklersgp21} and \eqref{Winklersgp22} yields that
\begin{align*}
\|\na e^{\frac{t}{A}\de}h_{\text{in}}\|_{L^p(\Torus^d)}\leq& C\lf(\left(\frac{t}{A}\right)^{-\frac{1}{2}}\mathbbm{1}_{t\leq 2A}+e^{-\frac{1}{2} \frac{t}{A}}\mathbbm{1}_{t>2A}\rg)\left(\frac{t}{A}\right)^{-\frac{d}{2}(\frac{1}{q}-\frac{1}{p})}\|h_{\text{in}}\|_{L^q(\Torus^d)}\\
\leq& C\left(\frac{t}{A}\right)^{-\frac{1}{2}-\frac{d}{2}(\frac{1}{q}-\frac{1}{p})}e^{-\frac{t}{3A}}\|h_{\text{in}}\|_{L^q(\Torus^d)}.
\end{align*}
This completes the proof of \eqref{Winklersgp2}.

Next we observe that from \eqref{Heat_eq}, 
\begin{align}
\pa_t \pa_X^m\mathfrak{c}=\frac{1}{A}\de\pa_X^m\mathfrak{c}+\frac{1}{A}\pa_X^m\lf(\mathfrak{n}-\overline{\mathfrak n}\rg).
\end{align}
The Duhamel formulation yields that
\begin{align*}
\|\pa_X^{m+1}\mathfrak{c}(t)\|_{L^p}\leq& \|e^{\frac{t}{A}\de}\pa_X^{m+1}\mathfrak{c}_{\text{in}}\|_{L^p}+\frac{1}{A}\lf\|\int_0^t e^{\frac{t-s}{A}\de}\lf(\pa_X^{m+1}(\mathfrak{n}(\tau)-\overline{\mathfrak n})\rg)d\tau \rg\|_{L^p}\\
=&\|e^{\frac{t}{A}\de}\pa_X^{m+1}\mathfrak{c}_{\text{in}}\|_{L^p}+\frac{1}{A}\lf\|\int_0^t \pa_X e^{\frac{t-s}{A}\de}\lf(\pa_X^{m}(\mathfrak{n}(\tau)-\overline{\mathfrak n})\rg)d\tau \rg\|_{L^p}.
\end{align*}
Now we invoke the estimates \eqref{Winklersgp1} and \eqref{Winklersgp2} to obtain that
\begin{align}
\|\pa_X^{m+1}\mathfrak{c}(t)\|_{L^p}\leq C\|\pa_X^{m+1}\mathfrak{c}_{\text{in}}\|_{L^p}+\frac{C}{A}\int_0^t\lf (\frac{t-\tau}{A}\rg)^{-\frac{1}{2}-\frac{d}{2}(\frac{1}{q}-\frac{1}{p})} e^{- \frac{t-\tau}{3A}}\|\pa_X^m(\mathfrak n(\tau)-\overline{\mathfrak n})\|_{L^q(\Torus^d)}d\tau
\end{align}
If $ \frac{1}{q}<\frac{1}{d}+\frac{1}{p}$, then,
\begin{align}\label{na_c_0_est_1}\\
\|\pa_X^{m+1}\mathfrak{c}(t)\|_{L^p}\leq& C\|\pa_X^{m+1}\mathfrak{c}_{\text{in}}\|_{L^p}+C\sup_{0\leq\tau\leq t}\|\pa_X^m(\mathfrak n(\tau)-\overline{\mathfrak n})\|_{L^q(\Torus^d)}\int_0^t\lf (\frac{t-\tau}{A}\rg)^{-\frac{1}{2}-\frac{d}{2}(\frac{1}{q}-\frac{1}{p})} e^{- \frac{t-\tau}{3A}}\frac{1}{A}d\tau\\
\leq &C\|\pa_X^{m+1}\mathfrak{c}_{\text{in}}\|_{L^p}+C\sup_{0\leq\tau\leq t}\|\pa_X^m(\mathfrak n(\tau)-\overline{\mathfrak n})\|_{L^q(\Torus^d)}\int_{\max\{0,t-A\}}^t\lf (\frac{t-\tau}{A}\rg)^{-\frac{1}{2}-\frac{d}{2}(\frac{1}{q}-\frac{1}{p})} \frac{1}{A}d\tau\\
&+C\sup_{0\leq\tau\leq t}\|\pa_X^m(\mathfrak n(\tau)-\overline{\mathfrak n})\|_{L^q(\Torus^d)}\int_0^{\max\{0,t-A\}} e^{- \frac{t-\tau}{3A}}\frac{1}{A}d\tau\\
\leq&C\|\pa_X^{m+1}\mathfrak{c}_{\text{in}}\|_{L^p}+C\sup_{0\leq\tau\leq t}\|\pa_X^m(\mathfrak n(\tau)-\overline{\mathfrak n})\|_{L^q(\Torus^d)}.
\end{align}Another way to estimate the $L^p$-norm is that, 
\begin{align}\label{na_c_0_est_2}
\|\pa_X^{m+1}\mathfrak{c}(t)\|_{L^p}\leq& C\|\pa_X^{m+1}\mathfrak{c}_{\text{in}}\|_{L^p}+C\sup_{0\leq\tau\leq t}\|\pa_X^m(\mathfrak n(\tau)-\overline{\mathfrak n})\|_{L^q(\Torus^d)}\int_0^t\lf (\frac{t-\tau}{A}\rg)^{-\frac{1}{2}-\frac{d}{2}(\frac{1}{q}-\frac{1}{p})} \frac{1}{A}d\tau\\
\leq& C\|\pa_X^{m+1}\mathfrak{c}_{\text{in}}\|_{L^p}+C\sup_{0\leq\tau\leq t}\|\pa_X^m(\mathfrak n(\tau)-\overline{\mathfrak n})\|_{L^q(\Torus^d)} \lf(\frac{t}{A}\rg)^{\frac{1}{2}-\frac{d}{2}(\frac{1}{q}-\frac{1}{p})}, \quad\frac{1}{q}<\frac{1}{d}+\frac{1}{p} .
\end{align}
Combining \eqref{na_c_0_est_1} and \eqref{na_c_0_est_2} yields \eqref{na_c_0_est}.
\ifx
Previous argument:
\textcolor{red}{The theorem requires reproving! The idea is that we can periodically extend the $c_0$ to $\rr^2$ and then use the heat semigroup on $\rr^2$ to get the $L^p$ to $L^q$ estimate. Then we can use the spectral gap of the $L^2\rightarrow L^2$-estimate of the $e^{t\de_{\Torus^2}/A}$.}

The estimates of the $L^q$-norm of the chemical gradient $\|\na \lan c\ran^\iota\|_q$ \eqref{na_c_0_est} are natural consequences of the following two estimates
\noindent
a) If $1\leq q\leq p\leq \infty$, then 
\begin{align}\label{Winkler_semigroup_1}
\|e^{\frac{t}{A}\de}h_{\text{in}}\|_p\leq C_1\left(1+\left(\frac{t}{A}\right)^{-\frac{2}{2}(\frac{1}{q}-\frac{1}{p})}\right)e^{-\lambda_1 \frac{t}{A}}\|h_{\text{in}}\|_q,\quad \forall t>0,
\end{align} 
holds for all $w\in L^q$ with $\int w=0$;

\noindent
b) If $1\leq q\leq p \leq \infty$, then 
\begin{align}\label{Winkler_semigroup_2}
\|\na e^{\frac{t}{A}\de}h_{\text{in}}\|_p\leq C_2\left(1+\left(\frac{t}{A}\right)^{-\frac{1}{2}-\frac{n}{2}(\frac{1}{q}-\frac{1}{p})}\right)e^{-\lambda_1 t/A}\|h_{\text{in}}\|_q,\quad \forall t>0.
\end{align} 
Here $\lambda_1>0$ is the first nonzero eigenvalue of the negative Laplacian $-\de$. 

Combining the Duhamel formulation and the estimates \eqref{Winkler_semigroup_1}, \eqref{Winkler_semigroup_2}, one can integrate in time to derive \eqref{na_c_0_est}. 
The proof of the two inequalities can be found in  Lemma 1.3 in \cite{Winkler10}. In the paper, the estimates are proved that for bounded domain with Neumann boundary condition. The Green's function estimate is crucial. However, on the torus, we can explicitly write down the Green' function of the heat kernel and check all the conditions in the proof of Winkler. A standard bookkeeping yields  \eqref{Winkler_semigroup_1}, \eqref{Winkler_semigroup_2}.

{\color{blue}Consider the heat equation 
\begin{align}
\pa_t h=\frac{1}{A}\de h, \quad h(t=0,X)=h_{\text{in}}(X),\, \overline{h_{\text{in}}}=0,\,\, X\in\Torus^2.
\end{align} Through standard energy estimate, we have that 
\begin{align}
\|h(t)\|_{L^2(\Torus^2)}\leq e^{-\frac{\lambda_1t}{A} }\|h_{\text{in}}\|_{L^2(\Torus^2)},\quad \forall t\geq 0.
\end{align}
Here $\lambda_1>0$ is the first nonzero eigenvalue of the negative Laplacian $-\de$. 

On the other hand, the Green's function of the heat equation on the torus can be explicitly spelled out
\begin{align}\label{heat_kernel}
h(t,X)=\sum_{(m, n)\in \mathbb{Z}^2}\frac{1}{4\pi t/A}\int_{Y\in \Torus^2}e^{-\frac{|X'+(m L, n L)-X|^2}{4 t/A}}h_{\text{in}}(X')dX'.
\end{align}
Direct estimate of the Green's function yields that 
\begin{align}\label{heat_LpLq_decay}
\|h(t)\|_{L_X^p(\Torus^2)}\leq C \left(\frac{t}{A}\right)^{-\frac{2}{2}(\frac{1}{q}-\frac{1}{p})}\|h_{\text{in}}\|_{L_X^q(\Torus^2)},\quad ( \forall t\in[0, 2A]??).
\end{align}\textcolor{red}{(Check!)}

For $p\in[1,2)$, application of the H\"older inequality yields that
\begin{align}
\|h(t)\|_{L^p(\Torus^2)}\leq |\Torus|^{\frac{4}{2-p}}\|h(t)\|_{L^2(\Torus^2)}\leq C e^{-\lambda_1\frac{(t-1)}{A}}\|h(A)\|_{L^2}\leq Ce^{-\lambda_1\frac{(t-A)}{A}}\|h_{\text{in}}\|_{L^q},\quad\forall t\geq 2A. 
\end{align}
If $p\in [2,\infty]$, we have
\begin{align}
\|h(t)\|_{L^p(\Torus^2)}\leq C\|h(t-A)\|_{L^2(\Torus^2)}\leq Ce^{-\lambda_1(\frac{t-2A}{A})}\|h(A)\|_{L^2(\Torus^2)}\leq Ce^{-\lambda_1\left(\frac{t}{A}-2\right)}\|h_{\text{in}}\|_{L^q(\Torus^2)},\quad \forall t\geq 2A.
\end{align}
For $t\leq 2A$, the estimate  \eqref{Winkler_semigroup_1} is direct from \eqref{heat_LpLq_decay}. Hence the proof of \eqref{Winkler_semigroup_1} is completed. 

In order to prove \eqref{Winkler_semigroup_2}, we first note that {\bf(Check!)}
\begin{align}
\|\na e^{\frac{t}{A}\de} h_{\text{in}}\|_{L^p(\Torus^2)}\leq C\left(\frac{t}{A}\right)^{-1/2}\|h_{\text{in}}\|_{L^p(\Torus^2)},\quad \forall t\leq A.
\end{align} 
For $t\leq 2A$, we can estimate the gradient with \eqref{Winkler_semigroup_1} as follows
\begin{align}
\|\na e^{\frac{t}{A}\de}h_{\text{in}}\|_{L^p(\Torus^2)}=&\|(\na e^{\frac{t}{2A}\de})e^{\frac{t}{2A}\de} h_{\text{in}}\|_{L^p(\Torus^2)}\leq C\left(\frac{t}{2A}\right)^{-1/2}\|e^{\frac{t}{2	A}\de}h_{\text{in}}\|_{L^p(\Torus^2)}\\
\leq& C\left(\frac{t}{2A}\right)^{-1/2}C_1\left(1+\left(\frac{t}{2A}\right)^{-\frac{2}{2}(\frac{1}{q}-\frac{1}{p})}\right)\|h_{\text{in}}\|_{L^q(\Torus^2)}.
\end{align}
{\bf I am not sure whether $\na e^{\frac{t}{A}\de}=(\na e^{\frac{t}{2A}\de})e^{\frac{t}{2A}\de}$ is justified. It feels like there is a $`2'$ factor?}
For $t\geq 2A$, we estimate it as follows:
\begin{align}
\|\na e^{\frac{t}{A}\de}h_{\text{in}}\|_{L^p(\Torus^2)}=&\|(\na e^{\de})e^{\frac{t-A}{A}\de} h_{\text{in}}\|_{L^p(\Torus^2)}\leq C\|h({t}-{A})\|_{L^p(\Torus^2)}\\
\leq&C\left(1+\left(\frac{t}{A}\right)^{-\frac{2}{2}(\frac{1}{q}-\frac{1}{p})}\right)e^{-\lambda_1 (\frac{t}{A}-1)}\|h_{\text{in}}\|_{L^q(\Torus^2)}.
\end{align}
This completes the proof.
 }\fi

\end{proof}

\noindent
\thanks{\textbf{Acknowledgment.} SH was supported in part by NSF grants DMS 2006660, DMS 2304392, DMS 2006372. The author would like to thank Tarek Elgindi for providing the key construction for the time-dependent shear flows and many crucial suggestions,  discussions and guidance.  }

\small
\bibliographystyle{abbrv}
\bibliography{nonlocal_eqns,JacobBib,SimingBib}

\end{document}